\documentclass[twoside, 11pt]{article}
\usepackage{amssymb, amsmath, mathrsfs, amsthm}
\usepackage{graphicx}
\usepackage{color,pict2e}
\usepackage{tikz}
\usepackage[top=2cm, bottom=2cm, left=2cm, right=2cm]{geometry}
\usepackage{float, caption, subcaption}
\usepackage{amsmath,amsthm,amsfonts,amssymb,amscd,mathrsfs}

\input{epsf}

\newcommand{\bd}{\begin{description}}
\newcommand{\ed}{\end{description}}
\newcommand{\bi}{\begin{itemize}}
\newcommand{\ei}{\end{itemize}}
\newcommand{\be}{\begin{enumerate}}
\newcommand{\ee}{\end{enumerate}}
\newcommand{\beq}{\begin{equation}}
\newcommand{\eeq}{\end{equation}}
\newcommand{\beqs}{\begin{eqnarray*}}
\newcommand{\eeqs}{\end{eqnarray*}}

\definecolor{DarkGreen}{rgb}{0.2, 0.6, 0.3}

\newcommand{\sat}{{\rm sat}}
\newcommand{\diam}{{\rm diam}}
\newcommand{\Sat}{{\rm Sat}}

\usepackage{enumitem}
\newlist{lemmaCases}{enumerate}{1}
\setlist[lemmaCases]{label=Case \arabic*:, left=0pt}

\newtheorem{theorem}{Theorem}[section]

\newtheorem{lemma}[theorem]{Lemma}

\newtheorem{corollary}[theorem]{Corollary}

\newtheorem{claim}{Claim}[]

\newtheorem{problem}{Problem}

\newtheorem{observation}{Observation}[section]
\begin{document}
\title{\textbf{On the saturation number of the kite graph } 
}

\author{ Huanying Bian\footnote{School of Mathematics and Statistics, Shandong University of Technology,
		Zibo 255000, China. {\tt
			huanying2000330@163.com}},\ \  Qing Cui\footnote{School of Mathematics,
		Nanjing University of Aeronautics and Astronautics,
		Nanjing 210016,  China. {\tt cui@nuaa.edu.cn}},\ \ Shengjin Ji\footnote{School of Mathematics and Statistics, Shandong University of Technology,
		Zibo 255000, China. {\tt
			jishengjinjin@sdut.edu.cn}} \footnote{Corresponding author: Shengjin Ji}, Fuhong Ma\footnote{School of Mathematics and Statistics, Shandong University of Technology,
		Zibo 255000, China. {\tt mafuhongsdnu@163.com}}
	\\\vspace{3mm}
	\small	}
\date{}
\maketitle
\vspace{-6pt}
\begin{abstract}
	For a fixed graph $H$, a graph $G$ is $H$-saturated if $G$ does not contain a copy of $H$, but adding any edge $e \in E(\overline{G})$ to $G$ creates a copy of $H$. The saturation number $\mathrm{sat}(n,H)$ is the minimum number of edges in an $H$-saturated graph on $n$ vertices. Let $K$ be the kite graph, formed by removing one edge from $ K_4$ and then attaching a pendant edge to a vertex of degree two in the resulting graph. 
	In this paper, 	we first establish a relationship between connectivity and 
	$K$-saturated graphs, and subsequently determine the saturation number of the kite graph  $K$. 
	Moreover, we completely characterize all extremal graphs.
	Our result provides a partial answer to a problem raised by Hua and Peng [Discrete Math. 349 (2026) 114674].
	\\[2mm]
	\textbf{Keywords:} saturation number; kite graph; saturated graph; extremal graph; connectivity of graphs.
	\\[2mm]
\end{abstract}

\section{Introduction}
All graphs considered in this paper are simple and finite. Let $G = (V(G), E(G))$, where $V(G)$ and $E(G)$ are the vertex set and the edge set of $G$, respectively. Let $|G|$ be the order of $G$ and $e(G)$ be the size of $G$. Let $N_G(v) = \{ u \in V(G) \mid uv \in E(G) \}$, $d_G(v) = |N_G(v)|$ be the degree of $v$. For a subset $V_1$ of $V(G)$, let $N_G(V_1) = \{ u \in V(G) \mid uv \in E(G) ~\text{and}~ v \in V_1\}$. 
If there exists a vertex $v \in V(G)$ such that $d_G(v)=1$, then $v$ is called a leaf, and the unique edge incident to $v$ is called a pendant edge. The distance of $u$ and $v$, denoted by $d_G(u, v)$, is the number of edges of a shortest path connecting them, in particular, if $u$ and $v$ belong to different connected components of $G$, then we define $d_G(u,v)=\infty$. We use $\diam(G) = \max_{u,v \in V(G)} d_G(u, v)$ to denote the diameter of $G$. Let $H$ be a subgraph of $G$.  We  define $d(v, H)=\max\{d_G(v,u)|\,\forall~ u\in V(H)\}$. Let $G - H$ be the subgraph $(V(G), E(G) \setminus E(H))$ of $G$. We use $G[D]$ to denote the subgraph of $G$ induced by $D \subseteq V(G)$. Let $\overline{G}$ denote the complement graph of $G$. For any two vertex-disjoint graphs $ F_1 $ and $ F_2 $, let $F_1 \cup F_2$ denote the union of $ F_1 $ and $ F_2 $, and let  $tF_1$ be the union of $t$ vertex-disjoint copies of $F_1$. The join of $ F_1 $ and $ F_2 $, denoted by $ F_1 \vee F_2 $, is the graph obtained by adding all edges between  $F_1$ and  $F_2$. Let $P_t$, $C_t$ and $K_t$  denote the path, cycle, complete graph on $t$ vertices, respectively.  Let $K_{a, b}$ be the  complete bipartite graph with parts sets of sizes $a$ and $b$. We define $C_{n-1}^+$ as the graph obtained by adding a new vertex to $C_{n-1}$ and joining it to one vertex of the cycle.



For a fixed graph $ H $, a graph $ G$ is \textit{$ H $-saturated} if$ G$ does not contain a copy of $ H $, but for any edge $ e \notin E(G) $, the graph $ G + e $ contains a copy of $ H $.
The saturation number of $H$, denoted by ${\sat}(n, H)$, is the minimum size of an $H$-saturated graph on $n$ vertices. Define $\Sat(n, H)$ as the set of all $H$-saturated graphs on $n$ vertices with $e(G) = {\sat}(n, H)$. The graphs in $\Sat(n, H)$ are called the \emph{extremal graphs}.

Over the past decades, the saturation number $\sat(n,H)$ has emerged as a compelling subject within extremal graph theory, attracting sustained research attention. The foundational result in this area, due to Erd\H{o}s, Hajnal and Moon \cite{erdos1964}, established that $\sat(n,K_p)=(p-2)(n-p+2)+\binom{p-2}{2}$, with the complete $(p-2)$-partite join $K_{p-2}\vee \overline{K}_{n-p+2}$ characterized as the unique extremal graph.
A major breakthrough came from K\'{a}szonyi and Tuza~\cite{kaszonyi1986}, whose seminal work laid the theoretical groundwork for the systematic investigation of saturated graphs and provided general bounds applicable to a wide family of host graphs.

Subsequent efforts have largely focused on determining saturation numbers for specific graphs of small order. For instance, $\sat(n,C_4)$ was first settled by Ollmann~\cite{ollmann1972}, and later streamlined by Tuza~\cite{tuza1989} with a shorter proof. The case of $C_5$ was resolved by Chen~\cite{chen2009, chen2011}, who also addressed the saturation number for $K_{2,3}$~\cite{chen2014}. More recently, Huang, Lei, Shi and Zhang~\cite{hlsz24} obtained $\sat(n, K_{3,3})$, while Lan, Shi, Wang and Zhang~\cite{lswz25} studied the saturation number of $C_6$.  In a parallel direction, Song, Hu, Ji and Cui~\cite{song2025} investigated the wheel graph $W_4$ and completely characterized its extremal graphs.
For further recent developments, we refer the reader to \cite{cameron2022,cao2023,chen2024,chen2015,chen2008,ffgj09,faudree2009,faudree2013,he2023,slp25,hlsz24,zhu2025} as well as the comprehensive survey by Currie, Faudree, Faudree, and Schmitt \cite{currie2021}.

Very recently, Hua and Peng~\cite{hua2026} determined the saturation number of the bull graph $K_1\vee (K_2\cup 2K_1)$ and  posed the following problem.
\vspace{-2mm}
\begin{problem}[Hua and Peng, \cite{hua2026}]\label{p1}
	Determine the saturation numbers for graphs on $5$ vertices which we don't know yet.
\end{problem}
\vspace{-2mm}

	\begin{figure}[htbp]
		\centering
		\begin{minipage}[b]{0.22\textwidth}
			\centering
			\begin{tikzpicture}[scale=0.55, line join=round, line cap=round, baseline=0]
				\coordinate (A) at (-1.5,0);
				\coordinate (B) at (0,-1.5);
				\coordinate (C) at (1.5,0);
				\coordinate (D) at (0,1.5);
				\coordinate (E) at (3.5,0);
				
				\draw[black, thick] (A) -- (B) -- (C) -- (D) -- cycle;
				\draw[black, thick] (B) -- (D);
				\draw[black, thick] (C) -- (E);
				
				\foreach \p in {A,B,C,D,E} \fill[black] (\p) circle (2.4pt);
			\end{tikzpicture}
			\vspace{0.2cm}
			
			\large $K$
		\end{minipage}
		\hspace{0.8cm}
		\begin{minipage}[b]{0.22\textwidth}
			\centering
			\begin{tikzpicture}[scale=0.8, line join=round, line cap=round, baseline=0]
				\coordinate (V1) at (0,1.2);
				\coordinate (V2) at ({cos(18)}, {sin(18)});
				\coordinate (V3) at ({cos(-54)}, {sin(-54)});
				\coordinate (V4) at ({-cos(-54)}, {sin(-54)});
				\coordinate (V5) at ({-cos(18)}, {sin(18)});
				
				\draw[black, thick] (V1) -- (V2) -- (V3) -- (V4) -- (V5) -- cycle;
				\draw[black, thick] (V2) -- (V5);
				
				\foreach \p in {V1,V2,V3,V4,V5} \fill[black] (\p) circle (1.5pt);
			\end{tikzpicture}
			\vspace{0.2cm}
			
			\large $H_5$
		\end{minipage}
		\hspace{0.8cm}
		\begin{minipage}[b]{0.22\textwidth}
			\centering
			\begin{tikzpicture}[scale=0.8, line join=round, line cap=round, baseline=0]
				\coordinate (V1) at (0,1.2);
				\coordinate (V2) at ({cos(18)}, {sin(18)});
				\coordinate (V3) at ({cos(-54)}, {sin(-54)});
				\coordinate (V4) at ({-cos(-54)}, {sin(-54)});
				\coordinate (V5) at ({-cos(18)}, {sin(18)});
				
				\draw[black, thick] (V1) -- (V2) -- (V3) -- (V4) -- (V5) -- cycle;
				
				\draw[black, thick] (V2) -- (V4);
				\draw[black, thick] (V3) -- (V5);
				
				\foreach \p in {V1,V2,V3,V4,V5} \fill[black] (\p) circle (1.5pt);
			\end{tikzpicture}
			\vspace{0.2cm}
			
			\large $K_4^s$
		\end{minipage}
		
		\caption{The graphs $K$, $H_5$, and $K_4^s$.}
		\label{fig:k_h5_k4plus}
	\end{figure}

	Let $K$ be the kite graph, formed by removing one edge from $ K_4$ and then attaching a pendant edge to a vertex of degree two in the resulting graph.  (see Figure~1). Let $H_5$ be the graph on five vertices obtained from $C_5$ by adding one chord. Let $K_4^s$ be obtained from $K_4$ by replacing an edge with a path of length two. So far, from the results of \cite{hua2027a,hua2027b} by Hua and Peng together with the data in \cite[Table 1]{hua2027b}, exactly three connected graphs on 
	$5$ vertices in Problem~\ref{p1} are identified as having undetermined saturation numbers (see Figure 1).
	This naturally motivates us to pursue further investigation along this line.
	
	\begin{figure}[H]
		\centering
		\begin{minipage}[t]{0.24\textwidth}
			\centering
			\begin{tikzpicture}[scale=0.9, line join=round, line cap=round]
				\coordinate (T1) at (-0.3,1.2);
				\coordinate (T2) at (0.3,1.2);
				\coordinate (M1) at ({-cos(18)}, {sin(18)});
				\coordinate (M2) at ({cos(18)}, {sin(18)});
				\coordinate (B1) at ({-cos(-54)}, {sin(-54)});
				\coordinate (B2) at ({cos(-54)}, {sin(-54)});
				\draw[black, thick] (T1) -- (M1) (T1) -- (M2) (T2) -- (M1) (T2) -- (M2);
				\draw[black, thick] (M1) -- (M2);
				\draw[black, thick] (M1) -- (B1) (M2) -- (B2) (B1) -- (B2);
				\foreach \p in {T1,T2,M1,M2,B1,B2} \fill[black] (\p) circle (1.5pt);
			\end{tikzpicture}
			\vspace{0.3cm}
			
			{\Large $F^1_6$}
		\end{minipage}
		\hspace{1cm}
		\begin{minipage}[t]{0.24\textwidth}
			\centering
			\begin{tikzpicture}[scale=0.9, line join=round, line cap=round]
				\coordinate (T1) at (-0.3,1.2);
				\coordinate (T2) at (0.3,1.2);
				\coordinate (M1) at ({-cos(18)}, {sin(18)});
				\coordinate (M2) at ({cos(18)}, {sin(18)});
				\coordinate (B1) at ({-cos(-54)}, {sin(-54)});
				\coordinate (B2) at ({cos(-54)}, {sin(-54)});
				\draw[black, thick] (T1) -- (M1) (T1) -- (M2) (T2) -- (M1) (T2) -- (M2);
				\draw[black, thick] (M1) -- (B1) (M2) -- (B2) (B1) -- (B2);
				\draw[black, thick] (M1) -- (B2);
				\foreach \p in {T1,T2,M1,M2,B1,B2} \fill[black] (\p) circle (1.5pt);
			\end{tikzpicture}
			\vspace{0.3cm}
			
			{\Large $F^{2}_6$}
		\end{minipage}
		\hspace{1cm}
		\begin{minipage}[t]{0.24\textwidth}
			\centering
			\begin{tikzpicture}[scale=0.9, line join=round, line cap=round]
				\coordinate (D) at (0,-0.8);
				\coordinate (L) at (-0.8,0);
				\coordinate (R) at (0.8,0);
				\coordinate (U) at (0,0.8);
				\coordinate (E1) at (-0.8,1.2);
				\coordinate (E2) at (0.8,1.2);
				
				\draw[black, thick] (D) -- (L) -- (U) -- (R) -- cycle;
				\draw[black, thick] (E1) -- (L);
				\draw[black, thick] (E1) -- (U);
				\draw[black, thick] (E2) -- (R);
				\draw[black, thick] (E2) -- (U);
				
				\foreach \p in {D,L,R,U,E1,E2} \fill[black] (\p) circle (1.5pt);
			\end{tikzpicture}
			\vspace{0.3cm}
			
			{\Large $F^{3}_6$}
		\end{minipage}
		
		\caption{The graphs $F^1_6$, $F^2_6$, and $F^3_6$.}
	\end{figure}

	\vspace{-4mm}
	\begin{figure}[htbp]
		\centering
		\begin{minipage}[b]{0.35\textwidth}
			\centering
			\begin{tikzpicture}[scale=0.7, transform shape, line join=round, line cap=round, baseline=(current bounding box.south)]
				\coordinate (O) at (0,1.5);
				\coordinate (T1) at (-0.8,3);
				\coordinate (T2) at (0.8,3);
				\coordinate (L1) at (-2.5,0);
				\coordinate (L2) at (-1.5,0);
				\coordinate (R1) at (1.5,0);
				\coordinate (R2) at (2.5,0);
				
				\draw[black, thick] (O) -- (T1) -- (T2) -- cycle;
				\draw[black, thick] (O) -- (L1) -- (L2) -- cycle;
				\draw[black, thick] (O) -- (R1) -- (R2) -- cycle;
				
				\node at (0,0) {$\cdots$};
				
				\fill[black] (O) circle (1.5pt);
				\fill[black] (T1) circle (1.5pt);
				\fill[black] (T2) circle (1.5pt);
				\fill[black] (L1) circle (1.5pt);
				\fill[black] (L2) circle (1.5pt);
				\fill[black] (R1) circle (1.5pt);
				\fill[black] (R2) circle (1.5pt);
			\end{tikzpicture}
			\vspace{0.2cm}
			
			$F_n^{\rm odd}$
		\end{minipage}
		\hspace{0.1cm}
		\begin{minipage}[b]{0.35\textwidth}
			\centering
			\begin{tikzpicture}[scale=0.7, transform shape, line join=round, line cap=round, baseline=(current bounding box.south)]
				\coordinate (O) at (0,1.5);
				\coordinate (P1) at (-1,3);
				\coordinate (P2) at (0,3.5);
				\coordinate (P3) at (1,3);
				\coordinate (L1) at (-2.5,0);
				\coordinate (L2) at (-1.5,0);
				\coordinate (R1) at (1.5,0);
				\coordinate (R2) at (2.5,0);
				
				\draw[black, thick] (O) -- (P1) -- (P2) -- (P3) -- (O);
				\draw[black, thick] (O) -- (P2);
				\draw[black, thick] (O) -- (L1) -- (L2) -- cycle;
				\draw[black, thick] (O) -- (R1) -- (R2) -- cycle;
				
				\node at (0,0) {$\cdots$};
				
				\fill[black] (O) circle (1.5pt);
				\fill[black] (P1) circle (1.5pt);
				\fill[black] (P2) circle (1.5pt);
				\fill[black] (P3) circle (1.5pt);
				\fill[black] (L1) circle (1.5pt);
				\fill[black] (L2) circle (1.5pt);
				\fill[black] (R1) circle (1.5pt);
				\fill[black] (R2) circle (1.5pt);
			\end{tikzpicture}
			\vspace{0.2cm}
			
			$F_n^{\rm even}$
		\end{minipage}
		\caption{The graphs $F_n^{\rm odd}$ and $F_n^{\rm even}$.}
	\end{figure}
	In this paper, we determine the exact value of $\operatorname{sat}(n,K)$ for all $n \ge 5$ and give a complete characterization of the extremal graphs. Our result provides a partial answer to  Problem~\ref{p1}.
	To present our main result, we first introduce some notations for kite-saturated graphs.
	
	Let $F_6^1$, $F_6^2$, $F_6^3$ be three graphs on six vertices obtained from $H_5$ as shown in Figure~2, respectively. 
	Let $F_n^{\textup{odd}} = \left\{K_1 \vee \frac{n-1}{2}K_2 \;\middle|\; n \text{ is odd},\ n \geq 5\right\}$, $F_n^{\textup{even}} = \left\{K_1 \vee \left(\frac{n-4}{2}K_2 \cup P_3\right) \;\middle|\; n \text{ is even},\ n \geq 6\right\},$ as shown in Figure~3.

	For $n \ge 5$, we define a function $f(n)$ and a family $\mathcal{K}_n$ of graphs as follows:
	\begin{equation*}
		f(n) =
		\begin{cases}
			\frac{3n}{2} - 1, & \text{if } n \equiv 0 \pmod 4; \\[4pt]
			\frac{3n}{2} - \frac{3}{2}, & \text{if } n \equiv 1 \pmod 4; \\[4pt]
			\frac{3n}{2} - 2, & \text{if } n \equiv 2 \pmod 4; \\[4pt]
			\frac{3n}{2} - \frac{3}{2}, & \text{if } n \equiv 3 \pmod 4.
		\end{cases}
	\end{equation*}
	
	\begin{equation*}
		\texttt{and}\  \mathcal{K}_n =
		\begin{cases}
			\big\{ F_{n-4p}^{\text{even}} \cup pK_4, 0 \le p \le \frac{n-8}{4} \big\}, & \text{if } n \equiv 0 \pmod 4; \\[6pt]
			\big\{ K_1 \cup \frac{n-1}{4}K_4, \; H_5 \cup \frac{n-5}{4}K_4 \big\} \cup \big\{ F_{n-4p}^{\text{odd}} \cup pK_4,  0 \le p \le \frac{n-5}{4} \big\}, & \text{if } n \equiv 1 \pmod 4; \\[6pt]
			\big\{ K_2 \cup \frac{n-2}{4}K_4 \big\}, & \text{if } n \equiv 2 \pmod 4; \\[6pt]
			\big\{ K_3 \cup \frac{n-3}{4}K_4 \big\} \cup \big\{ F_{n-4p}^{\text{odd}} \cup pK_4, \; 0 \le p \le \frac{n-7}{4} \big\}, & \text{if } n \equiv 3 \pmod 4.
		\end{cases}
	\end{equation*}
	
	\begin{theorem}\label{thm:main}
		Let $G$ be a minimum kite-saturated graph of order $n\ge 5$. Then, we have
		$e(G)=\sat(n, K) = f(n)$,
		moreover, $\Sat(n, K) = \mathcal{K}_n$.
	\end{theorem}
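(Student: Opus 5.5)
The proof has two halves. First I check that every graph in $\mathcal{K}_n$ is $K$-saturated with exactly $f(n)$ edges, which gives $\sat(n,K)\le f(n)$. Then I show that every $K$-saturated graph on $n$ vertices has at least $f(n)$ edges, with equality only for members of $\mathcal{K}_n$.

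For the upper bound, note that $K$ has a vertex of degree $3$ lying in two triangles, plus a pendant neighbour. The graphs $K_4$, $K_1\vee tK_2$ (friendship graphs), $K_1\vee(tK_2\cup P_3)$ and $H_5$ are kite-free: in $K_4$ there is no fifth vertex for the pendant, and in a friendship graph no edge lies in two triangles. I then check that adding any non-edge creates a kite.
\begin{itemize}
\item For an edge between two components, one endpoint of the new edge should be a vertex of a diamond ($K_4-e$) of degree $2$ there. Every vertex of $K_4$ qualifies. In $K_1\vee(\dots\cup P_3)$ the diamond is $K_1\vee P_3$ and the centre is such a vertex. This is why $F^{\rm odd}$, which has no diamond, can coexist only with $K_4$'s and not with small components such as $K_1,K_2,K_3$ paired wrongly.
\item The isolated pieces $K_1,K_2,K_3$ are allowed only once, and only next to $K_4$'s, since an edge between two small components creates no diamond.
\end{itemize}
The edge counts are routine: $6p$ from the $K_4$'s plus $3(m-1)/2$ for $F^{\rm odd}_m$ or $3m/2-1$ for $F^{\rm even}_m$. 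These match $f(n)$ case by case modulo $4$.

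For the lower bound, the key structural lemma (presumably the announced connectivity result) says the following. If $G$ is $K$-saturated, then at most one component is not "diamond-rich". Concretely, any two non-adjacent vertices $u,v$ in different components must have one of them, say $u$, serving as the degree-$2$ vertex of a diamond in its component, with $v$ as the pendant. Hence in all components but at most one, every vertex lies in a diamond as a degree-$2$ vertex. I then show that such a component $C$ satisfies $e(C)\ge \tfrac32|C|$, with equality iff $C=K_4$. The idea is to show $\delta\ge 3$ there, or to charge edges locally: a vertex of degree $2$ in a diamond has its two neighbours adjacent and of degree $\ge 3$.

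Next I treat a connected $K$-saturated graph $G$ on $m$ vertices and show $e(G)\ge \lceil 3(m-1)/2\rceil$, with equality characterized. The tools are these.
\begin{itemize}
\item Each non-edge $uv$ closes a kite. So either $u$ and $v$ have a common neighbour lying on a triangle-rich structure, or one of them is the degree-$3$ vertex and the other the pendant. This forces diameter at most $2$ or $3$ and strongly restricts degree-$1$ and degree-$2$ vertices.
\item Degree-$1$ vertices: a leaf $x$ with neighbour $y$ forces every non-neighbour of $x$ to be adjacent to $y$ appropriately, so $y$ is nearly dominating.
\item Degree-$2$ vertices must lie in triangles.
\end{itemize}
A discharging or counting argument based on a maximum-degree vertex $w$ then gives the bound. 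Vertices outside $N[w]$ need many edges, and $G-w$ restricted to $N(w)$ must be a graph with no $P_3$-pair issues, essentially a matching plus at most one $P_3$. This yields the friendship-type extremal graphs and, for small $m$, the sporadic $H_5$ (for $m=5$).

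Finally I combine the components by optimizing. The components other than the exceptional one contribute $\ge \tfrac32$ per vertex. The exceptional one contributes the connected bound, or is $K_1$, $K_2$ or $K_3$ (costs $0,1,3$), which are not diamond-rich. Minimizing over $n \bmod 4$ gives $f(n)$ and exactly the families $\mathcal{K}_n$; for instance, for $n\equiv2$ the value $K_2\cup\frac{n-2}{4}K_4$ beats the $F^{\rm even}$ option by one.

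The main obstacle is the connected case. The requirement to handle vertices of degree $1$ and $2$ and to pin down equality including sporadic graphs ($H_5$; excluding $F_6^i$, which presumably are saturated but have $9>f(6)=7$ edges) needs a delicate case analysis. I would start by proving $\delta\ge 2$ except in trivial cases, then analyse $N(w)$ for a maximum-degree vertex $w$.
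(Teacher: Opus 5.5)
Your treatment of the disconnected case is sound, and tidier than the paper's order-based Lemma~\ref{lem:comp_order}. In a kite-free graph, a vertex $v$ that is a degree-$2$ vertex of a diamond $D_v$ has $N(v)\subseteq V(D_v)$. So in a ``diamond-rich'' component every degree is $2$ or $3$. Degree $2$ is impossible: a chord endpoint $p$ of $D_v$ would need $N(p)$ to be the other three vertices of its own diamond $D_p$, and these must span a triangle, but $v$ and the vertex opposite it are non-adjacent. Hence such a component is cubic and contains a $K_4$, so it is $K_4$. (A harmless slip: in $K_1\vee(tK_2\cup P_3)$ the degree-$2$ vertices of the diamond $K_1\vee P_3$ are the ends of the $P_3$, not a centre.)

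The genuine gap is the connected case, which carries the whole theorem and which you only assert. ``A discharging or counting argument based on a maximum-degree vertex $w$; vertices outside $N[w]$ need many edges'' is not an argument. Kite-freeness does force $G[N(w)]$ to be a union of stars. But bounding the edges meeting $V(G)\setminus N[w]$ when $w$ is not universal is exactly the difficulty. Nothing in your plan shows that a connected kite-saturated graph without a dominating vertex has more than $\lceil 3(m-1)/2\rceil$ edges, whether it is $2$-connected or has diameter $3$ or $4$. Your ``diameter at most $2$ or $3$'' is also unproved.

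One of your listed tools is false: degree-$2$ vertices need not lie in triangles. In $H_5$, which you list as extremal, both degree-$2$ vertices have non-adjacent neighbours. The same happens in $F_6^1,F_6^2,F_6^3$, which have $8=f_1(6)$ edges, not $9$. So your list of connected extremal graphs is incomplete. This is harmless only because components of order $\equiv 2\pmod 4$ never occur in an extremal graph.

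The paper's route to the connected bound uses ideas absent from your outline. Lemma~\ref{lem:keylemma} and Corollary~\ref{cor:type_II} show that a non-edge $xy$ with $d_G(x)\neq 2$ and $d_G(y)\ge 3$ must be a diamond (type~II) edge of $K^{xy}$, so $d_G(x,y)=2$. This gives $\diam(G)\le 4$ (Lemma~\ref{lem:diam_4}) and, more importantly, that vertices of degree at least $3$ are pairwise at distance at most $2$. The proof then splits by diameter and connectivity:
\begin{itemize}
\item A cut vertex with diameter $2$ forces a universal vertex, and there your matching-plus-$P_3$ count does work (Lemma~\ref{lem:1conn_exact}).
\item Diameter $3$ or $4$ forces $2$-connectivity (Lemma~\ref{lem:diam34_2connected}).
\item For $2$-connected $G$, one deletes the degree-$2$ vertices to obtain a $2$-connected diameter-$2$ graph. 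The possible configurations of these vertices, including those with non-adjacent neighbours, are pinned down via the induced $H_5$ of Lemmas~\ref{lem:H5_existence} and~\ref{lem:D2_structure}. The Bollob\'as bound $e\ge 2n-5$ (Theorem~\ref{thm:diam2_bound}) then gives $e(G)\ge 2n-6>f_1(n)$ for $n\ge 11$, with orders $5$ to $10$ checked separately.
\end{itemize}
Without a mechanism of comparable strength excluding sparse graphs with no dominating vertex, neither $\sat(n,K)\ge f(n)$ nor $\Sat(n,K)=\mathcal{K}_n$ follows from your plan.
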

	
	The remainder of this paper is organized as follows. Section~2 presents the structural properties of kite-saturated graphs and introduces the necessary definitions. In Section~3, we determine the saturation number and characterize the extremal graphs for the connected case. Section~4 analyzes the structure of disconnected kite-saturated graphs. Finally, in Section~5, we combine the results obtained in  the connected and disconnected cases to complete the proof of Theorem~\ref{thm:main}.
	
	\section{Properties of kite-saturated graphs}
	In this section, we introduce some notations that will be used throughout the paper, and establish several properties of the kite graph and kite-saturated graphs.
	
	For convenience, we now classify  the  edges and vertices of the  kite graph as shown in Figure~4. In addition, assume that $G$ is kite-saturated and $xy\notin E(G)$. Then $G+xy$ contains a copy of the kite graph for each $xy\in  E(\overline{G})$, say $K^{xy}$.  From now on, we will present some properties of $x$ and $y$.
	
	\begin{observation}\label{obs:upperbound}
		Let $G_0 \in \mathcal{K}_n$. We have that $G_0$ is kite-saturated, moreover, $\sat(n,K)\le f(n)$.
	\end{observation}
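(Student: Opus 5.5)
We verify directly that each graph in $\mathcal{K}_n$ is kite-free, that adding any non-edge creates a kite, and that its size is $f(n)$; the inequality $\sat(n,K)\le f(n)$ then follows because $\mathcal{K}_n\neq\emptyset$ for every $n\ge 5$.

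\emph{Kite-freeness.} The kite is connected, so it must lie inside a single component. Components $K_1,K_2,K_3$ have fewer than $5$ vertices. $K_4$ has only $4$ vertices. $H_5$ has $6$ edges, and a spanning kite would need $6$ edges, so it would have to equal $H_5$; but $H_5$ has minimum degree $2$ while the kite has a leaf. For $F^{\rm odd}_m=K_1\vee tK_2$ and $F^{\rm even}_m=K_1\vee(tK_2\cup P_3)$, the kite contains the diamond $K_4-e$. Every triangle here uses the center $c$, and two triangles sharing an edge must together lie in $c\vee K_2$ or $c\vee P_3$. Since $c\vee K_2$ is only a triangle, the diamond must be $c\vee P_3$, with degree-two vertices the ends $a,b$ of the $P_3$. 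The pendant must attach to $a$ or $b$, but their only neighbours are $c$ and the middle vertex of the $P_3$, both already in the diamond. So no kite exists.

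\emph{Saturation.} Take a non-edge $xy$.
\begin{itemize}
\item If $x,y$ lie in two different $K_4$'s, use a $K_4-e$ inside $x$'s copy with $x$ as a degree-two vertex, and pendant $xy$.
\item If $x\in K_4$ and $y$ lies in any other component, the same construction works.
\item If $x,y$ lie in different non-$K_4$ components, this only happens in $K_1\cup\ldots$ or similar lists where the other components are $K_4$'s, so it is covered above.
\item Inside $H_5$ (the cycle $v_1\ldots v_5$ with chord $v_2v_5$), the non-edges are $v_1v_3$, $v_1v_4$, $v_2v_4$, $v_3v_5$. Each is checked by hand. For instance, adding $v_1v_3$ gives the diamond on $v_1,v_2,v_3,v_5$ (edges $v_1v_2,v_2v_3,v_1v_3,v_1v_5,v_2v_5$) with degree-two vertices $v_3,v_5$, and pendant $v_5v_4$. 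The others follow by the reflection symmetry $v_1$ fixed, $v_2\leftrightarrow v_5$, $v_3\leftrightarrow v_4$, plus one more check for $v_3v_5$ (diamond $v_2,v_3,v_5,v_4$? edges $v_2v_3,v_3v_4,v_4v_5,v_5v_2,v_3v_5$ with degree-two vertices $v_2,v_4$, pendant $v_2v_1$).
\item Inside $F^{\rm odd}$ or $F^{\rm even}$, a non-edge joins two non-central vertices $x\in A$, $y\in B$, where $A,B$ are among the pieces $K_2$ or $P_3$; or it joins the two ends of the $P_3$. In the first case, let $x'$ be a neighbour of $x$ in $A$. Then $c,x,x',y$ span the diamond $K_4-x'y$, whose degree-two vertices are $x'$ and $y$. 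Attach as pendant a neighbour of $y$ inside $B$, which exists since $|B|\ge2$. In the second case, with $P_3=abd$, adding $ab$ makes $\{c,a,b,d\}$ a $K_4$, which contains a diamond with $a$ of degree two. Hang any other vertex from $c$ instead: choose the diamond with $c$ of degree two (remove edge $cd$), and use a pendant from $c$ to another $K_2$-vertex, which exists since $m\ge6$. If $m=6$, then $F^{\rm even}_6=K_1\vee(K_2\cup P_3)$ does have a $K_2$.
\item When a $K_1$, $K_2$ or $K_3$ component occurs alongside $K_4$'s, all non-edges touching it go to a $K_4$, which is handled by the $K_4$ case. Inside $K_3$ there are no non-edges.
\end{itemize}

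\emph{Edge count.} Each $K_4$ contributes $6=\tfrac32\cdot4$ edges. $F^{\rm odd}_m$ has $\tfrac{3(m-1)}{2}$ edges, and $F^{\rm even}_m$ has $(m-1)+\tfrac{m-4}{2}+2=\tfrac{3m}{2}-1$ edges. $K_1,K_2,K_3,H_5$ have $0,1,3,6$ edges. Adding $6p$ for $p$ copies of $K_4$ recovers $f(n)$ in each residue class mod $4$. For example, for $n\equiv2$ the graph $K_2\cup\tfrac{n-2}4K_4$ has $1+\tfrac{3(n-2)}2=\tfrac{3n}2-2$ edges.

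The main care point is the exhaustive non-edge check inside $H_5$ and the $P_3$-ends case of $F^{\rm even}$.
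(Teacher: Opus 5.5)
Your proof is correct and follows the same direct-verification route as the paper (kite-freeness, a kite through every added non-edge, and an edge count), but it is more complete. The paper checks only $K_2\cup\frac{n-2}{4}K_4$ and dismisses the other members of $\mathcal{K}_n$ ``without loss of generality'', even though neither its kite-freeness argument (every component has at most $4$ vertices) nor its saturation argument (a $K_4$ plus one outside vertex) handles $H_5$, $F^{\rm odd}_m$, $F^{\rm even}_m$ or the non-edges inside them; your case analysis supplies exactly these checks. Only cosmetic fixes are needed: write the path as $a\,d\,b$ with ends $a,b$ (not $abd$, in which $ab$ would already be an edge), and replace the garbled third bullet with the remark that two distinct non-$K_4$ components never occur, since every member of $\mathcal{K}_n$ has at most one.
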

	\begin{proof}
		We now show that each graph in $\mathcal{K}_n$ is kite-saturated. To avoid redundancy in the proof, it suffices to prove for $G_0 \cong K_2 \cup \frac{n-2}{4}K_4$ without loss of generality. Each component of $G_0$ has at most $4$ vertices, so $G_0$ is $K$-free since $|V(K)|=5$. For any $xy \in E(\overline{G_0})$, assume $x$ lies in a $K_4$ component. Then $G_0 + xy$ contains a kite formed by this $K_4$ together with $y$. Hence $G_0$ is kite-saturated. Since $e(G_0)=f(n)$, we obtain $\operatorname{sat}(n,K) \le f(n)$.
	\end{proof}
	\begin{figure}[htbp]\label{defineK}
		\centering
		\tikzset{
			solid circle/.style={circle, fill=black, inner sep=0pt, minimum size=5pt},
			empty circle/.style={circle, draw=black, thick, fill=white, inner sep=0pt, minimum size=5pt},
			solid square/.style={rectangle, fill=black, inner sep=0pt, minimum size=5pt},
			empty square/.style={rectangle, draw=black, thick, fill=white, inner sep=0pt, minimum size=5pt},
			line pt/.style={circle, fill=black, inner sep=0pt, minimum size=2.5pt} 
		}
		
		\begin{tikzpicture}[scale=0.9, line join=round, line cap=round, baseline=(current bounding box.center)]
			
			\coordinate (T) at (0,1.5);       
			\coordinate (B) at (0,-1.5);      
			\coordinate (L) at (-1.5,0);      
			\coordinate (R) at (1.5,0);       
			\coordinate (E) at (3.5,0);       
			
			\draw[black, thick] (L) -- (T);
			\draw[black, thick] (L) -- (B);
			\draw[black, thick] (T) -- (R);
			\draw[black, thick] (B) -- (R);
			\draw[black, thick] (T) -- (B);
			
			\draw[black, thick, dashed] (R) -- (E);
			
			\node[solid square] at (L) {};
			\node[empty square] at (T) {};
			\node[empty square] at (B) {};
			\node[empty circle] at (R) {};
			\node[solid circle] at (E) {};
			
		\end{tikzpicture}
		\hspace{2cm}
		\begin{tikzpicture}[scale=0.8, line join=round, line cap=round, baseline=(current bounding box.center)]
			
			\draw[black, thick, dashed] (0,5) -- (1,5);
			\node[line pt] at (0,5) {};
			\node[line pt] at (1,5) {};
			\node[right] at (1.3,5) {\textup{type I edge}};
			
			\draw[black, thick] (0,4.3) -- (1,4.3);
			\node[line pt] at (0,4.3) {};
			\node[line pt] at (1,4.3) {};
			\node[right] at (1.3,4.3) {\textup{type II edge}};
			
			\node[solid circle] at (0.5,3.6) {};
			\node[right] at (1.3,3.6) {\textup{type } $\alpha_0$ \text{ vertex}};
			
			\node[empty circle] at (0.5,2.9) {};
			\node[right] at (1.3,2.9) {\textup{type } $\alpha_1$ \text{ vertex}};
			
			\node[empty square] at (0.5,2.2) {};
			\node[right] at (1.3,2.2) {\textup{type } $\alpha_2$ \text{ vertex}};
			
			\node[solid square] at (0.5,1.5) {};
			\node[right] at (1.3,1.5) {\textup{type } $\alpha_3$ \text{ vertex}};
			
		\end{tikzpicture}
		
		\caption{Classifications of vertices and edges in $K$.}
	\end{figure}
	

	\begin{observation}\label{cla:degree}
		Let $G \in \operatorname{Sat}(n, K)$ with order $n \geq 5$. Then $\delta(G) \le 2$ and $\Delta(G)\ge 3$.
	\end{observation}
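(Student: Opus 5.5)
We prove the two inequalities separately; both follow from the edge count $e(G)=\sat(n,K)\le f(n)$ given by Observation~\ref{obs:upperbound}, combined with the saturation property.

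\emph{$\delta(G)\le 2$.} Suppose instead that $\delta(G)\ge 3$. Then $2e(G)=\sum_v d_G(v)\ge 3n$, so $e(G)\ge 3n/2$. Inspecting the four cases of $f(n)$ shows $f(n)\le \frac{3n}{2}-1<\frac{3n}{2}$ for every $n\ge 5$. This contradicts $e(G)=\sat(n,K)\le f(n)$, so $\delta(G)\le 2$.

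\emph{$\Delta(G)\ge 3$.} Suppose instead that $\Delta(G)\le 2$. Because $n\ge 5$, $G$ is not complete, so there is a non-edge $xy$, and $G+xy$ contains a copy $K^{xy}$ of the kite. In $K$ the vertex of type $\alpha_1$ has degree $3$, and the two vertices of type $\alpha_2$ have degree $3$ as well. In $G+xy$, only $x$ and $y$ have degree exceeding $2$, and each has degree at most $3$. So $G+xy$ has at most two vertices of degree $3$, while $K$ needs three. This is a contradiction, so $\Delta(G)\ge 3$.

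The only point that needs care is confirming $f(n)<3n/2$ in every residue class mod $4$. Every other step is immediate.
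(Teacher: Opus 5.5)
Your proof is correct and follows the paper's approach. The bound $\delta(G)\le 2$ is exactly the paper's degree-sum argument against $e(G)\le f(n)<\frac{3n}{2}$, and your argument for $\Delta(G)\ge 3$ (adding $xy$ raises only $x$ and $y$ to degree $3$, while $K$ has three vertices of degree $3$) spells out what the paper only asserts as following from ``the structure of $K$''; that part uses only saturation, so it holds for every kite-saturated graph of order $n\ge 5$.
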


	\begin{proof}
		Suppose $G \in \operatorname{Sat}(n, K)$ with $n \geq 5$. Together with the structure of $K$, we observe that $\Delta(G)\ge 3$.   We now assume that $\delta(G) \ge 3$. We thus estimate that $e(G) \geq \frac{\delta(G)}{2} n \ge \frac{3n}{2} > f(n)$, which contradicts Observation \ref{obs:upperbound}.
	\end{proof}
	\begin{observation} \label{obs:dist_type}
		If $xy$ is a type II edge in $K^{xy}$, then $d_G(x, y) = 2$. Furthermore, if $d_G(x, y) \ge 3$, then $xy$ must be the type I edge.
	\end{observation}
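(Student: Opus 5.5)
The proof is a direct inspection of the kite $K$ using the edge classification of Figure~4. Label $K$ by its vertices: $a$ is the type $\alpha_1$ vertex (degree $3$ inside the $K_4$ minus an edge, also carrying the pendant edge), $b,c$ are the two type $\alpha_2$ vertices (degree $3$, adjacent to each other), $d$ is the type $\alpha_3$ vertex (degree $2$, adjacent to $b,c$), and $e$ is the leaf, i.e.\ the type $\alpha_0$ vertex. The type I edge is the pendant edge $ae$. The type II edges are the five edges $ab,ac,bc,bd,cd$ of the $K_4$ minus an edge.

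\textbf{Step 1: a type II edge lies on a triangle of $K$.} Every type II edge lies on a triangle of $K$: $ab$ and $ac$ lie in triangle $abc$, $bd$ and $cd$ lie in triangle $bcd$, and $bc$ lies in both.

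\textbf{Step 2: the first claim.} Suppose $xy$ plays the role of a type II edge in $K^{xy}$. Pick a triangle of $K$ containing that edge, and let $z$ be its third vertex. The edges $xz$ and $yz$ of $K^{xy}$ are different from $xy$, so they are edges of $G$. Hence $x z y$ is a path of length $2$ in $G$, giving $d_G(x,y)\le 2$. Since $xy\notin E(G)$ and $x\neq y$, we also have $d_G(x,y)\ge 2$, so $d_G(x,y)=2$. The same argument covers the case where $x$ and $y$ lie in different components: a type II role would force them to share a neighbour, which is impossible, so $d_G(x,y)=\infty$ also rules out type II.

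\textbf{Step 3: the second claim.} This is the contrapositive of Step 2. Every edge of $K^{xy}$ is either type I or type II. If $d_G(x,y)\ge 3$ (including $\infty$), then $xy$ cannot be type II, so it is the type I edge.

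\textbf{Expected difficulty.} There is no real obstacle. The only point needing care is confirming that every non-pendant edge of $K$ lies on a triangle, and that is immediate from the labelling above.
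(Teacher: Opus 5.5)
Your proof is correct and is essentially the paper's argument. Every type II edge of $K$ lies on a triangle, so the triangle's third vertex is a common neighbour of $x$ and $y$ in $G$, which forces $d_G(x,y)=2$; the second claim is then the contrapositive. One cosmetic slip: the type $\alpha_1$ vertex has degree $2$ in $K_4$ minus an edge and degree $3$ only in $K$, but your edge list $ab,ac,bc,bd,cd,ae$ is right, so nothing in the argument is affected.
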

	
	\begin{proof}
		Let $xy$ be a type II edge of $K^{xy}$.
		Note that every type II edge in $K^{xy}$ belongs to at least one triangle. It follows that $x$ and $y$ have a common neighbor in $G$, which implies $d_G(x, y) = 2$. The second statement then follows directly from the first.
	\end{proof}
	
	\begin{lemma}\label{lem:keylemma}
		Suppose that $x$ and $y$ belong to one component of $G$.  If $xy$ is the type I edge and $d_G(y) \ge 3$, then $y$ is not a type $\alpha_1$ vertex.
	\end{lemma}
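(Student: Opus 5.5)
Recall the labelling of Figure~4: in $K$ the type~I edge is the pendant edge, joining the unique type $\alpha_1$ vertex (degree $3$ in $K$) to the unique type $\alpha_0$ vertex (degree $1$). The two type $\alpha_2$ vertices are adjacent to each other and to both the $\alpha_1$ vertex and the $\alpha_3$ vertex. I would argue by contradiction: suppose $xy$ is the type~I edge of $K^{xy}$ and $y$ is the type $\alpha_1$ vertex, so that $x$ is the $\alpha_0$ vertex. Deleting the edge $xy$ from $K^{xy}$ leaves a diamond that lies entirely in $G$. Concretely, there are distinct vertices $a,b,c$, all different from $x$ and $y$, with
$$ya,\ yb,\ ab,\ ca,\ cb\in E(G).$$
Here $a,b$ are the $\alpha_2$ vertices and $c$ is the $\alpha_3$ vertex. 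The strategy is to use $d_G(y)\ge 3$ and the connectivity hypothesis to locate a kite already inside $G$, contradicting that $G$ is $K$-free.

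\emph{Step 1.} Since $d_G(y)\ge 3$, the vertex $y$ has a neighbour $w\notin\{a,b\}$. Note that $w\ne x$, because $xy\notin E(G)$. If $w\ne c$, then the diamond on $\{y,a,b,c\}$ together with the edge $yw$ is a kite in $G$. In this kite $y$ plays the role of $\alpha_1$ and $w$ that of $\alpha_0$; the vertices are distinct, so this is a contradiction. Hence the only option is $N_G(y)\supseteq\{a,b,c\}$ with every neighbour of $y$ outside $\{a,b\}$ equal to $c$. In particular $yc\in E(G)$, so $\{a,b,c,y\}$ induces a $K_4$ in $G$.

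\emph{Step 2.} Observe that a $K_4$ on $\{a,b,c,y\}$ together with any edge from one of its vertices $u$ to an outside vertex $z$ contains a kite. To see this, delete from the $K_4$ the edge joining the two vertices other than $u$. The result is a diamond in which $u$ has degree $2$, and attaching the pendant $uz$ gives the kite. So no vertex of $\{a,b,c,y\}$ has a neighbour outside this set. But $x\notin\{a,b,c,y\}$ lies in the same component of $G$ as $y$. A shortest $y$--$x$ path in $G$ must therefore leave $\{a,b,c,y\}$ along some edge $uz$ with $u\in\{a,b,c,y\}$ and $z\notin\{a,b,c,y\}$. This produces a kite in $G$, a contradiction. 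Therefore $y$ cannot be the $\alpha_1$ vertex.

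The only delicate point is bookkeeping in Step 1: one must check that $w$ is genuinely distinct from $a,b,c$ and $x$, so that the five vertices of the kite are distinct. One must also note that the remaining case forces the full $K_4$; this is exactly where the hypothesis that $x$ and $y$ lie in the same component is needed.
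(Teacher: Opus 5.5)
Your proof is correct and follows essentially the same route as the paper. The paper also uses $d_G(y)\ge 3$ to force $N_G(y)=\{a,b,c\}$ and hence a $K_4$ on $\{y,a,b,c\}$; it then takes a shortest $x$--$y$ path and uses the vertex preceding the neighbour $v_r$ of $y$ to obtain an edge leaving this $K_4$, which gives a kite inside $G$.

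There are two small slips to fix. In Step 2 you should delete an edge joining $u$ to one of the other three vertices; deleting an edge between two vertices other than $u$ leaves $u$ with degree $3$. Also, the same-component hypothesis is used in Step 2, not in forcing the $K_4$ as your closing remark says.
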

	
	\begin{proof}
		Let $xy$ be a type I edge. By assumption, we have $d_G(x) \ge 1$ and $d_G(y) \ge 3$. Write $N_G(x) = \{x_1, x_2, \dots, x_{m_1}\}$ and $N_G(y) = \{y_1, y_2, \dots, y_{m_2}\}$, where $m_1 \ge 1$ and $m_2 \ge 3$. Since $G$ is connected, there is a shortest path between $x$ and $y$; without loss of generality, let this path be $P = x v_1 v_2 \cdots v_r y$, with $v_1 = x_1$ and $v_r = y_1$ (in particular, when $r = 1$, we have $x_1 = y_1$).

		Suppose, for contradiction, that $y$ is the type $\alpha_1$ vertex in $K^{xy}$. We  observe that $d_G(y)=3$ and $|V(K^{xy})\cap N_G(y)|=3$, otherwise, we would obtain a copy of $K$ in $G$, a contradiction.  Hence $G[\{y, y_1, y_2, y_3\}] \cong K_4$, which in turn implies that $G[\{y, y_1, y_2, y_3, y^*\}]$ contains a copy of $K$, where $y^*=v_{r-1}$ if $r\ge 2$, $y^*=x$ otherwise, again a contradiction. This  completes the proof. 
	\end{proof}
	\vspace{-2mm}
	\begin{figure}[htbp]\label{keyfig}
		\centering
		\begin{tikzpicture}[scale=0.9, line join=round, line cap=round]
			
			\coordinate (x) at (0,0);
			\coordinate (y) at (1.8,0);
			
			\coordinate (x2) at (-1.2,1.0);
			\coordinate (xm1) at (-1.2,-1.0);
			
			\coordinate (y1) at (3.0,1.0);
			\coordinate (y2) at (4.3,0);
			\coordinate (ym2) at (3.0,-1.0);
			
			\coordinate (x1) at (1.2, 0.4);
			\coordinate (v1) at (1.8, 0.6);
			\coordinate (vr) at (2.4, 0.8);
			
			\draw[gray, thick] (x) -- (y);
			
			\draw[black, thick] (x) -- (x2);
			\draw[black, thick] (x) -- (xm1);
			
			\fill[black] (-1.2,0.1) circle (0.8pt);
			\fill[black] (-1.2,-0.1) circle (0.8pt);
			\fill[black] (-1.2,-0.3) circle (0.8pt);
			
			\draw[black, thick] (x) -- (x1);
			\draw[black, thick] (x1) -- (v1);
			\draw[black, thick, dashed] (v1) -- (vr);
			\draw[black, thick] (vr) -- (y1);
			
			\draw[black, thick] (y) -- (y1);
			\draw[black, thick] (y) -- (y2);
			\draw[black, thick] (y) -- (ym2);
			
			\draw[black, thick] (y1) -- (y2);
			\draw[black, thick] (y2) -- (ym2);
			
			\draw[black, thick] (y1) to[out=120, in=-120, looseness=0.3] (ym2);
			
			\fill[black] (3.0,-0.2) circle (0.8pt);
			\fill[black] (3.0,-0.4) circle (0.8pt);
			\fill[black] (3.0,-0.6) circle (0.8pt);
			
			\node[left=4pt] at (x2) {$x_2$};
			\node[left=4pt] at (xm1) {$x_{m_1}$};
			\node[below=3pt] at (x) {$x$};
			\node[below=3pt] at (y) {$y$};
			
			\node[above=5pt] at (y1) {$y_1(v_r)$};
			\node[right=3pt] at (y2) {$y_2$};
			\node[right=3pt] at (ym2) {$y_{m_2}$};
			
			\node[above left=2pt] at (x1) {$x_1(v_1)$};
			\node[above left=2pt] at (v1) {$v_2$};
			
			\foreach \p in {x,y,x2,xm1,y1,y2,ym2,x1,v1,vr}
			\fill[black] (\p) circle (1.2pt);
			
		\end{tikzpicture}
		\caption{The local structure of $G$ in Lemma~\ref{lem:keylemma}.}
	\end{figure}
	\begin{corollary}\label{cor:type_II}
		Suppose that $x$ and $y$ belong to one component of $G$.  If $d_G(x) \neq 2$ and $d_G(y) \ge 3$, then $xy$ is a type II edge of $K^{xy}$ and $d_G(x,y) = 2$.
	\end{corollary}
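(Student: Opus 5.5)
The plan is to rule out the possibility that $xy$ is the type I edge of $K^{xy}$ and then apply Observation~\ref{obs:dist_type}. Recall that the type I edge is the pendant edge of $K$. It joins the unique type $\alpha_0$ vertex (degree $1$ in $K$) to the unique type $\alpha_1$ vertex (degree $3$ in $K$). So if $xy$ were type I, one of $x,y$ would be the $\alpha_1$ vertex and the other the $\alpha_0$ vertex of $K^{xy}$.

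\textbf{Step 1.} Suppose, for contradiction, that $xy$ is the type I edge. Since $x,y$ lie in one component and $d_G(y)\ge 3$, Lemma~\ref{lem:keylemma} says that $y$ is not of type $\alpha_1$. Hence $y$ is the $\alpha_0$ vertex and $x$ is the $\alpha_1$ vertex of $K^{xy}$.

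\textbf{Step 2.} Use the hypothesis $d_G(x)\neq 2$, splitting into two cases.
\begin{itemize}
\item If $d_G(x)\ge 3$: the statement of Lemma~\ref{lem:keylemma} is symmetric in the names of the two endpoints, since its proof only uses a shortest $x$--$y$ path in the common component. Applying it with the roles of $x$ and $y$ exchanged shows that $x$ is not of type $\alpha_1$, a contradiction.
\item If $d_G(x)\le 1$: as the $\alpha_1$ vertex, $x$ is incident in $K^{xy}$ to the pendant edge $xy$ and to two further edges, namely those to the two type $\alpha_2$ vertices. Only the edge $xy$ was added to $G$, so these two edges lie in $G$, forcing $d_G(x)\ge 2$, a contradiction.
\end{itemize}
(The case $d_G(x)=0$ is also impossible directly, since $x$ shares a component with $y$.)

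\textbf{Step 3.} Therefore $xy$ is a type II edge of $K^{xy}$, and Observation~\ref{obs:dist_type} gives $d_G(x,y)=2$.

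The only point needing care is Step 2's use of Lemma~\ref{lem:keylemma} with the roles reversed. I would check that its hypotheses, same component, $xy$ the type I edge, and degree at least $3$ at the vertex claimed not to be $\alpha_1$, are symmetric, which they are. Everything else is bookkeeping about the degrees of vertices in $K$.
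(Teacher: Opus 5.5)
Your proposal is correct and follows the paper's proof essentially step for step: Lemma~\ref{lem:keylemma} rules out $y$ as the type $\alpha_1$ vertex, so $x$ must be it; then $d_G(x)\ge 3$, and applying Lemma~\ref{lem:keylemma} again to $x$ gives a contradiction, with Observation~\ref{obs:dist_type} finishing the argument. Your explicit case $d_G(x)\le 1$ (the $\alpha_1$ vertex has both of its $\alpha_2$ neighbours in $G$) just spells out what the paper compresses into ``together with the condition $d_G(x)\neq 2$, we deduce that $d_G(x)\ge 3$.''
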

	
	\begin{proof}
		Suppose for contradiction that $xy \in E(K^{xy})$ is a type I edge. Then, one vertex of $x$ and $y$ is the type $\alpha_1$ vertex.  By Lemma~\ref{lem:keylemma}, $y$ with $d_G(y) \ge 3$ is not the type $\alpha_1$ vertex. Hence, $x$ is the type $\alpha_1$ vertex.
		Together with the condition $d_G(x) \neq 2$, we deduce that $d_G(x) \ge 3$. 
		But, by Lemma~\ref{lem:keylemma},  $x$ is not a type $\alpha_1$ vertex. This contradicts our assumption.
		Hence, $xy $ is a type II edge of $K^{xy}$. Moreover, we derive $d_G(x,y) = 2$ from Observation~\ref{obs:dist_type}.
	\end{proof}

	\begin{lemma}\label{lem:diam_4}
		Let $G$ be a connected kite-saturated graph. Then $\text{diam}(G) \le 4$.
	\end{lemma}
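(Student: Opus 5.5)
The plan is to argue by contradiction. Suppose $G$ is a connected kite-saturated graph with two vertices $x,y$ satisfying $d_G(x,y)\ge 5$. The contradiction will come from applying Lemma~\ref{lem:keylemma} twice, once at each end of a shortest path, to force low-degree local structures that Corollary~\ref{cor:type_II} then forbids.

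\textbf{Step 1: the structure at $y$.} Since $d_G(x,y)\ge 3$, Observation~\ref{obs:dist_type} says $xy$ is the type I edge of $K^{xy}$, so one endpoint is the type $\alpha_1$ vertex. By symmetry, say it is $y$. In $K^{xy}$, the vertex $y$ is then adjacent to the two type $\alpha_2$ vertices $a,b$. These satisfy $ab\in E(G)$, and they have a further common neighbour $c$ (the $\alpha_3$ vertex). All of these edges lie in $G$, since only $xy$ is new. By Lemma~\ref{lem:keylemma}, which applies because $G$ is connected, we get $d_G(y)\le 2$. Hence $N_G(y)=\{a,b\}$, and in particular $d_G(a)\ge 3$, since $a$ is adjacent to $y$, $b$ and $c$.

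\textbf{Step 2: the structure at $x$.} Because $a$ is a neighbour of $y$, we have $d_G(x,a)\ge 4$, and so $xa\notin E(G)$. Again by Observation~\ref{obs:dist_type}, $xa$ is the type I edge of $K^{xa}$. Since $d_G(a)\ge 3$, Lemma~\ref{lem:keylemma} shows that $a$ is not the type $\alpha_1$ vertex, so $x$ must be. The argument of Step 1 then gives $N_G(x)=\{x_1,x_2\}$, where $x_1x_2\in E(G)$ and $x_1,x_2$ have a further common neighbour $w$. Consequently $d_G(x_1)\ge 3$, since $x_1$ is adjacent to $x$, $x_2$ and $w$.

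\textbf{Step 3: the contradiction.} Consider the pair $x_1,a$. We have $d_G(x_1,a)\ge d_G(x,a)-1\ge 3$, so $x_1a\notin E(G)$. Both endpoints have degree at least $3$, so Corollary~\ref{cor:type_II} applies and gives $d_G(x_1,a)=2$, a contradiction. Hence $\diam(G)\le 4$.

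The only delicate point is the distance bookkeeping. We need the distance between the two degree-$\ge 3$ vertices $x_1$ and $a$ to remain at least $3$. This is exactly where $d_G(x,y)\ge 5$ is used rather than $\ge 4$: it guarantees $d_G(x,a)\ge 4$ and then $d_G(x_1,a)\ge 3$. I would also check that the case "$x$ is $\alpha_1$ in $K^{xy}$" is genuinely symmetric, which it is, since the argument never uses any asymmetry between $x$ and $y$.
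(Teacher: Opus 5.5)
Your proof is correct, but it reaches the contradiction by a different route from the paper's.

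The paper takes a diametral path $v_1v_2\cdots v_{l+1}$ with $l\ge 5$ and adds the single edge $v_2v_l$ between the two vertices next to the ends. Since $d_G(v_2,v_l)=l-2\ge 3$, this edge is of type I. Whichever of $v_2,v_l$ is the $\alpha_1$ vertex already has two non-adjacent neighbours on the induced path, while its two $\alpha_2$ neighbours in the kite must be adjacent. So its degree is at least $3$, and Lemma~\ref{lem:keylemma} is contradicted immediately.

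You instead start from the endpoints of an arbitrary pair at distance at least $5$ and use three saturation steps: $xy$, then $xa$, then $x_1a$. The first two steps use the diamond forced around an $\alpha_1$ endpoint to produce vertices $a\in N_G(y)$ and $x_1\in N_G(x)$ of degree at least $3$. The third step rules them out via Corollary~\ref{cor:type_II}. Your bookkeeping $d_G(x,a)\ge 4$ and $d_G(x_1,a)\ge 3$ is right, and it is exactly where distance $5$ rather than $4$ is needed.

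The paper's argument is shorter, because internal vertices of a shortest path get their extra degree for free. Yours never needs a shortest path or its internal vertices, at the cost of two extra applications of saturation.
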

	
	\begin{proof}
		Assume to the contrary that $\text{diam}(G) \ge 5$. Let $P = v_1v_2 \cdots v_lv_{l+1}$ be a diametral path of $G$ with $l \ge 5$. Consider the graph $G + v_2v_l$, by our assumption, it contains  $K^{v_2v_l}$ as a copy of $K$. Since $P$ is an induced path, $d_G(v_2, v_l) = l - 2 \ge 3$. By Observation~\ref{obs:dist_type}, $v_2v_l$ is the type I edge of $K^{v_2v_l}$.
		By  symmetry between $v_2$ and $v_l$, we assume without loss of generality that $v_2$ is the type $\alpha_1$ vertex and $v_l$ is the type $\alpha_0$ vertex in $K^{v_2v_l}$.
		Then, $v_2$ are adjacent to two type $\alpha_2$ vertices of  $K^{v_2v_l}$. If they are $v_1$ and $v_3$, then they are adjacent from  the structure of $K^{v_2v_l}$. This contradicts the choice of $P$. Thus, $K^{v_2v_l}$ contains a neighbor of $v_2$ as a type $\alpha_2$ vertex, distinct from $v_1$ and $v_3$. Consequently, $d_G(v_2) \ge 3$.  By Lemma~\ref{lem:keylemma}, $v_2$ cannot be  the type $\alpha_1$ vertex of $K^{v_2v_l}$, which contradicts our assumption.
		Hence,  $v_2v_l$  is not a type I edge. This  contradicts our initial assumption that $\text{diam}(G) \ge 5$. Therefore,  $\text{diam}(G) \le 4$, as required.
	\end{proof}

	\section{Connected kite-saturated graphs}
	In this section, we will determine all connected kite-saturated graphs on $n$ vertices of  minimum size. We first  classify  vertices of  degree two in a connected kite-saturated graph.
	
	Let $D_2$ denote the set of vertices of degree $2$ in $G$. We partition $D_2$ into two disjoint subsets based on the adjacency of their neighbors:
	\begin{align*}
		D_2^1 &= \{v \in V(G) \mid d_G(v) = 2 \text{ and the two neighbors of } v \text{ are adjacent}\}, \\
		D_2^2 &= \{v \in V(G) \mid d_G(v) = 2 \text{ and the two neighbors of } v \text{ are non-adjacent}\}.
	\end{align*}
	
	For $n \ge 5$, we define a function $f_1(n)$ as follows:
	\begin{equation*}
		f_1(n) =
		\begin{cases}
			\frac{3n}{2} - \frac{3}{2}, & \text{if } n \text{ is odd;} \\[6pt]
			\frac{3n}{2} - 1, & \text{if } n \text{ is even.}
		\end{cases}
	\end{equation*}
	The main result of this section is presented as follows.
	\begin{theorem}\label{thm:connected_bound}
		Let $G$ be a connected kite-saturated graph of order $n \ge 5$. We have
		$e(G) \ge f_1(n).$
		Moreover, equality holds only if $G \in \{H_5, F_n^{\text{odd}}\}$  for odd $n\ge 5$ or $G \in \{F_6^1, F_6^2, F_6^3, F_n^{\text{even}}\}$ for even $n\ge 6$.
	\end{theorem}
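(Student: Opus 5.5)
We will prove the bound by degree counting, in the form $2e(G)=\sum_v d_G(v)\ge 3n-c$ with $c=3$ for odd $n$ and $c=2$ for even $n$. It therefore suffices to control the vertices of degree at most $2$.

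\textbf{Step 1: excluding leaves.} We first show that a connected kite-saturated graph $G$ of order $n\ge 5$ has no leaf, except in configurations that cost too many edges.

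Suppose $d_G(x)=1$ and let $u$ be the neighbour of $x$. Take any $y$ with $d_G(y)\ge 3$ and $d_G(x,y)\ge 3$, or more generally any $y\ne u$. Adding $xy$ must create $K^{xy}$. Since $x$ has only one neighbour in $G$, $x$ has degree at most $2$ in $G+xy$. Hence $x$ cannot be an $\alpha_2$ or $\alpha_3$ vertex using $xy$ as a triangle edge unless $u\sim y$. Corollary~\ref{cor:type_II} then forces $xy$ to be a type II edge with $d_G(x,y)=2$ whenever $d_G(y)\ge3$. So every vertex of degree at least $3$ is adjacent to $u$.

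Further, $x$ must be an $\alpha_3$ vertex, whose neighbours $u,y$ are the two $\alpha_2$ vertices. This gives $uy\in E(G)$ together with a common neighbour $w$ of $u$ and $y$ whose pendant edge leads elsewhere. We then show that $u$ dominates a large part of $G$ and that the remaining vertices pair up, which yields either a kite in $G$ or a contradiction. Any leftover case should only occur for small $n$, to be checked by hand.

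\textbf{Step 2: degree-$2$ vertices.} We treat $D_2=D_2^1\cup D_2^2$ separately.

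For $v\in D_2^2$ with neighbours $a,b$, $ab\notin E(G)$, adding $ab$ must create a kite. Moreover, for every vertex $y$ at distance $2$ from $v$, the added edge $vy$ makes $v$ a vertex of a triangle in $K^{vy}$. We will use this, together with Lemma~\ref{lem:keylemma} and Lemma~\ref{lem:diam_4}, to show that $D_2^2$ is essentially empty or forces many high-degree vertices. The target is a discharging rule in which each vertex of degree at least $4$ sends charge to its degree-$2$ neighbours.

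For $v\in D_2^1$ with neighbours $a,b$, $ab\in E(G)$, the triangle $vab$ cannot extend to $K_4^-$ plus a pendant edge. So if $v$ had another common-neighbour structure, a kite would appear. We intend to show that one of $a,b$ has large degree, and that degree-$2$ vertices come in adjacent pairs $v,v'$ sharing a common neighbour (triangles hanging at a hub), exactly as in $F_n^{\rm odd}$.

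\textbf{Step 3: charging.} Give each vertex charge $d_G(v)-3$; the total charge is $2e(G)-3n$, and we want it to be at least $-c$. Each degree-$2$ vertex has charge $-1$. We will show that each hub $w$ of degree $d$ has at most $d$ degree-$2$ neighbours, and that charge $d-3$ covers them with deficit at most $3$ overall. The deficit is at most $3$ because only one hub can carry such pendant triangles: two hubs would create a kite or violate Lemma~\ref{lem:diam_4}-type constraints. Parity then gives $-3$ for odd $n$ and $-2$ for even $n$, since an even count of pendant-triangle vertices forces some vertex of degree at least $3$ off the hub.

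\textbf{Step 4: equality.} In the equality case all charge inequalities are tight. Either the hub structure is forced, giving $F_n^{\rm odd}$ or $F_n^{\rm even}$, or there is no hub, in which case $n\le 6$ and a finite check yields $H_5,F_6^1,F_6^2,F_6^3$.

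The main obstacle is Step 2: proving that degree-$2$ vertices essentially all cluster around a single high-degree vertex. This needs a careful case analysis of $K^{vy}$ for the non-edges $vy$, and we will organize it by the type (from Figure~4) that $v$ takes in $K^{vy}$.
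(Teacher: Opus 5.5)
\textbf{There is a genuine gap: the proposal does not yet contain a proof.} Every load-bearing step is announced rather than carried out. Step 1 stops at ``we then show that $u$ dominates a large part of $G$''. Steps 2--3 never state the discharging rule, and never prove the global bound $\sum_v (d_G(v)-3)\ge -3$.

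The structural picture the charging is meant to rest on is also incorrect:
\begin{itemize}
\item Degree-$2$ vertices do not in general come in adjacent pairs over one hub. In $F_n^{\mathrm{even}}$ the two ends of the $P_3$ are nonadjacent. In $F_6^1$ the two bottom vertices are adjacent but have different second neighbours.
\item $D_2^2$ is nonempty in $H_5$, $F_6^1$, $F_6^2$ and $F_6^3$, i.e.\ in the extremal graphs themselves. So it cannot be ``essentially empty''.
\item Under your rule only vertices of degree at least $4$ send charge, yet $H_5$ has no vertex of degree $4$. You dispose of such hub-free deficits by asserting ``$n\le 6$'', which is a substantial claim with no argument.
\item Your ``only one hub'' argument is valid only for pendant triangles whose two non-hub vertices both have degree $2$; adding an edge between two such triangles indeed creates no kite. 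It says nothing about degree-$2$ vertices on triangles over edges $ab$ of the rest of the graph. Nor does it cover pairs of $D_2^2$-vertices with two common neighbours (Lemma~\ref{lem:D2_structure}(ii)). These can sit at many different vertices of degree $3$, which carry zero charge.
\end{itemize}

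\textbf{What is missing is a global density statement} that pays for these degree-$2$ vertices; a local rule cannot see it. The paper extracts it from Corollary~\ref{cor:type_II}: nonadjacent vertices of degree at least $3$ are at distance exactly $2$. Together with $\mathrm{diam}(G)\le 4$, this drives a case split:
\begin{itemize}
\item With a cut vertex and diameter $2$, the cut vertex is universal and $G-v$ is a union of stars. This forces $F_n^{\mathrm{odd}}$ or $F_n^{\mathrm{even}}$.
\item If $G$ is $2$-connected of diameter $2$, Bollob\'as' bound $e(G)\ge 2n-5$ exceeds $f_1(n)$ for $n\ge 9$.
\item If the diameter is $3$ or $4$, $G$ is $2$-connected. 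Deleting $D_2$ (plus one auxiliary edge) leaves a $2$-connected core of diameter $2$, whence $e(G)\ge 2n-6$.
\item Small orders are checked directly.
\end{itemize}
To rescue a charging proof, you would need an analogous lemma showing that the vertices of degree at least $3$ span a graph of average degree close to $4$.

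Your parity step is fine but simpler than you state it: $2e(G)-3n\equiv n \pmod 2$, so a bound of $-3$ automatically improves to $-2$ for even $n$.
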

	\subsection{ kite-saturated graphs with connectivity two}
	In this subsection,  we will present some local structures of kite-saturated graphs with respect to $D_2$ and the sizes of kite-saturated graphs with small order.  For convenience, let $C_4$ denote the unique $4$-cycle of the kite graph. If a new edge is added  to connect two nonadjacent vertices of $C_4$, then we call this new edge is a chord of $C_4$.
	\begin{lemma}\label{lem:H5_existence}
		Let $G$ be a $2$-connected kite-saturated graph of order $n$. For $n \ge 6$, if there exists $v \in D^2_2$, then there exist four vertices $v_1,v_2,v_3,v_4 \in V(G)$ such that $G[\{v, v_1, v_2, v_3, v_4\}] \cong H_5$.
	\end{lemma}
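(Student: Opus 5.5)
The plan is to work with the non-edge joining the two neighbours of $v$, and to look for $H_5$ with $v$ in the role of a degree-two vertex whose two neighbours are non-adjacent. Write $N_G(v)=\{a,b\}$ with $ab\notin E(G)$, and label $H_5$ as the cycle $u_1u_2u_3u_4u_5$ with chord $u_2u_5$. Then $u_3$ has degree two and non-adjacent neighbours $u_2,u_4$, so $v$ should play $u_3$. Concretely, we want vertices $c,w$ such that $acw$ is a triangle, $c$ is adjacent to $b$, and $w$ is not adjacent to $b$; then $\{v,a,b,c,w\}$ induces $H_5$ via $v=u_3$, $a=u_2$, $b=u_4$, $c=u_5$, $w=u_1$. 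Since $d_G(v)=2$, the vertex $v$ is automatically non-adjacent to $c$ and $w$, so inducedness only requires $wb\notin E(G)$ (and $ab\notin E(G)$, which holds by assumption).

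The first step is to consider the kite $K^{ab}$ in $G+ab$ and split by the type of $ab$.

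If $ab$ is a type II edge of $K^{ab}$, there are three positions in the diamond $K_4-e$.
\begin{itemize}[left=0pt]
\item If $ab$ is a side edge joining the type $\alpha_1$ vertex to a type $\alpha_2$ vertex (either orientation), say $a$ is the $\alpha_2$ vertex, let $c$ be the other $\alpha_2$ vertex and $w$ the $\alpha_3$ vertex. This gives exactly the triangle $acw$ together with the edge $cb$.
\item Next, check that $wb\notin E(G)$. Otherwise $\{a,c,w,b\}$ spans a diamond in $G$ whose missing edge is $ab$, and the pendant neighbour of the $\alpha_1$ vertex of $K^{ab}$ would complete a kite in $G$, a contradiction.
\item Also check that $v\notin V(K^{ab})$. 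If $v$ were one of $c,w$ or the pendant vertex, it would need a neighbour other than $a,b$ or adjacency forced by the kite. This is excluded by $d_G(v)=2$ and $ab\notin E(G)$.
\item If $ab$ is a side edge at the $\alpha_3$ vertex, the same argument works with the roles of $\alpha_1$ and $\alpha_3$ swapped.
\item If $ab$ is the diagonal (the $\alpha_2\alpha_2$ edge), then $a,b$ have two common neighbours $c$ (type $\alpha_3$) and $d$ (type $\alpha_1$), and $d$ has a pendant neighbour $e$. Here $v$ may coincide with $c$. In that subcase I would use $d,e$ and look at further neighbours of $a$ or $b$ to rebuild a triangle on $a$ or $b$, or else derive a kite in $G$.
\end{itemize}

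If $ab$ is the type I edge, one of $a,b$, say $b$, is the type $\alpha_1$ vertex. By Lemma~\ref{lem:keylemma}, $d_G(b)\le 2$ in that case, so $b$ is a degree-two vertex lying on a triangle structure of $K^{ab}-ab$. Then $b\in D_2^1$ or $D_2^2$, and I would propagate: consider the non-edge between $v$ and a vertex at distance two from $v$ (for instance a neighbour of $a$ other than $v$). Apply Observation~\ref{obs:dist_type} and Corollary~\ref{cor:type_II} to force a triangle through $a$ meeting a neighbour of $b$.

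Two-connectivity is used to guarantee that $a$ and $b$ are joined by a path avoiding $v$, so that $G-v$ gives a common neighbour or a short path between $a$ and $b$. The hypothesis $n\ge 6$ is used to exclude the small exceptional configuration $C_5$ and similar graphs in which no extra vertex $w$ exists.

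I expect the main obstacle to be the degenerate subcases: $ab$ is the diagonal of the diamond with $v$ itself serving as one of the common neighbours, or $ab$ is of type I. In these cases the kite $K^{ab}$ does not directly give the triangle $acw$. My first attempt there is to add a second non-edge $vx$, where $x$ is a neighbour of $a$ or $b$ outside $\{v,a,b\}$. In $G+vx$, the vertex $v$ has degree three, so by Lemma~\ref{lem:keylemma} and Corollary~\ref{cor:type_II} its role in $K^{vx}$ is heavily restricted, which should produce the missing triangle.
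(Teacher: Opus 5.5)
Your first step (look at $K^{ab}$ and split by the position of $ab$) matches the paper's, and your side-edge cases are essentially the paper's Case~1, up to two small repairs. First, $v$ may be the pendant vertex of $K^{ab}$, so claim only $v\notin\{c,w\}$. Second, when $ab$ is a side edge at the $\alpha_3$ vertex, the pendant of $K^{ab}$ sits at $w$. That vertex has degree three in the would-be diamond $\{a,c,w,b\}$, so to exclude $wb\in E(G)$ you must use $v$ itself as the pendant at $a$.

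The type I case needs no propagation. If $b$ is the $\alpha_1$ vertex, then $v\notin V(K^{ab})$, because as an $\alpha_2$ or $\alpha_3$ vertex it would have a neighbour outside $\{a,b\}$. So replacing $ab$ by $vb$ gives a kite in $G$. Equivalently, $b$ has three neighbours ($v$ and the two $\alpha_2$ vertices), which contradicts Lemma~\ref{lem:keylemma}. You deduced $d_G(b)\le 2$ but did not notice this immediate contradiction.

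The genuine gap is the diagonal case, which is where all the work in the paper lies. Let $d$ be the $\alpha_1$ vertex and $e$ its pendant. Whether $v$ coincides with the $\alpha_3$ vertex is irrelevant; what matters is $N_G(e)\cap\{a,b\}$.
\begin{itemize}
\item If $e$ is adjacent to both $a$ and $b$, then $\{v,a,b,d,e\}$ contains a kite.
\item If $e$ is adjacent to exactly one of them, say $a$, then $c:=d$, $w:=e$ gives your $H_5$.
\item If $e$ is adjacent to neither, no $H_5$ arises, and this case must be shown impossible.
\end{itemize}

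Your proposed tool for the last case does not work. Lemma~\ref{lem:keylemma} and Corollary~\ref{cor:type_II} are stated in terms of degrees in $G$, and $d_G(v)=2$, so the corollary never applies to a pair containing $v$. Moreover, adding $vx$ with $x$ a neighbour of $a$ or $b$ is uninformative. For instance, in $G+vd$ the set $\{v,a,d,b\}$ is a diamond with $a,b$ of degree two, so any further neighbour of $a$ or $b$ completes a kite, and saturation gives nothing.

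The missing idea is to add the non-edge $ve$, where $d_G(v,e)=3$.
\begin{itemize}
\item By Observation~\ref{obs:dist_type}, $ve$ is the type I edge of $K^{ve}$.
\item $v$ cannot be $\alpha_1$, because its neighbours are non-adjacent. So $e$ is $\alpha_1$, and by Lemma~\ref{lem:keylemma} $N_G(e)$ is exactly the pair of $\alpha_2$ vertices of $K^{ve}$, one of which is $d$.
\item If $a$ or $b$ lies in $K^{ve}$, it must be the $\alpha_3$ vertex, and $v$ attached there gives a kite in $G$.
\item Otherwise, let $v_5$ be the other $\alpha_2$ vertex. Then $d_G(v_5)\ge 3$, and $v_5$ has no neighbour in $\{a,b\}$, or else a kite appears with $v$ as pendant. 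Likewise the $\alpha_3$ vertex is not adjacent to $a$ or $b$. Hence $d_G(v,v_5)=3$.
\item Then $G+vv_5$ contains no kite. The new edge would have to be type I, yet $v$ cannot be $\alpha_1$, and $v_5$ cannot be $\alpha_1$ by Lemma~\ref{lem:keylemma}. This contradicts saturation.
\end{itemize}
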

	
	\begin{proof}
		Let $v\in D^2_2$. We use $v_1$ and $v_2$ to denote the two nonadjacent neighbors of $v$.  Thus, from our assumption, $G + v_1v_2$ contains $K^{v_1v_2}$ as a copy of $K$.  We first verify the following claim.
		\setcounter{claim}{0} 
		\begin{claim}\label{v1v2}
			$v_1v_2$ is a type II edge in $K^{v_1v_2}$.   \end{claim}
		\proof
		If not, we assume that $v_1v_2$ is the type I edge of $K^{v_1v_2}$.   Without loss of generality,  let $v_2$ be the type $\alpha_1$ vertex and $v_1$ be the type $\alpha_0$ vertex in $K^{v_1v_2}$.
		Observe that  $v\notin V(K^{v_1v_2})$ by $d_G(v)=2$.
		Hence, we derive that $G$ contains a copy of $K$ by replacing $v_1v_2$ with $vv_2$, a contradiction.
		\qed
		
		Observe that  at least one of the two  $v_1$ and $v_2$ is a type $\alpha_2$ vertex. Furthermore, $C_4$ contains exactly one chord in $K^{v_1v_2}$, otherwise, we derive a copy of $K$ in $G$, a contradiction.
		We now proceed  by  considering the types of $v_1$ and $v_2$.
		
			\medskip
		\noindent\textbf{Case 1.} $v_1$ and $v_2$ are not both type $\alpha_2$ vertices.
		Then $v_1v_2\in E(C_4)$. Let the other two vertices of $C_4$ be $v_3$ and $v_4$ such that  $C_4=v_1v_2v_3v_4v_1$. Note that these four vertices form a path $v_2v_3v_4v_1$ of $G$ with an additional edge either $v_1v_3$ or $v_2v_4$. Together with $N_G(v)=\{v_1,v_2\}$, we get a $5$-cycle as $vv_2v_3v_4v_1v$ with an additional edge $v_1v_3$ or $v_2v_4$.
		Thus, $G[\{v, v_1, v_2, v_3, v_4\}] \cong H_5$.
		
			\medskip
		\noindent\textbf{Case 2.} $v_1$ and $v_2$ are both type $\alpha_2$ vertices. It follows from $d_G(v)=2$ that $v$ is neither type $\alpha_1$ vertex nor  type $\alpha_0$ vertex in $K^{v_1v_2}$. We denote these two vertices in $K^{v_1v_2}$ by $v_3$ and $v_4$, respectively.
		
		We first assume that  $v_4 \in N_G(v_1) \cup N_G(v_2)$. We claim that $v_4 \notin N_G(v_1) \cap N_G(v_2)$; otherwise $G[\{v,v_1, v_2, v_3, v_4\}]$ contains a copy of $K$, a contradiction.  Thus, without loss of generality, we may assume that $v_4 \in N_G(v_1)$ and $v_4 \notin N_G(v_2)$. We  observe that in $G$,   the four vertices $v_1, v_2, v_3, v_4$ form a $4$-cycle with a chord $v_1v_3$.   Consequently, $G[\{v, v_1, v_2, v_3, v_4\}] \cong H_5$.
		
		We now consider the case $v_4 \notin N_G(v_1) \cup N_G(v_2)$.
		Together with $N_G(v)=\{v_1,v_2\}$, we deduce that $v v_1(v_2) v_3 v_4$ is an induced path in $G$, so $d_G(v, v_4) = 3$.
		By our assumption, $G + vv_4$ contains $K^{vv_4}$ as a copy of $K$.  By Observation~\ref{obs:dist_type}, $vv_4$ is the  type I edge of $K^{vv_4}$. Since $v_1v_2 \notin E(G)$ and $d_G(v)=2$, $v$ cannot be the type $\alpha_1$ vertex in $K^{vv_4}$. Hence, $v_4$ is indeed that vertex.
		Note that $v_4 \notin N_G(v_1) \cup N_G(v_2)$. thus, $K^{vv_4}$ cannot contain both $v_1$ and $v_2$.
		\begin{itemize}
			\item $V(K^{vv_4})\cap \{v_1,v_2\}\neq \emptyset$.  Without loss of generality, we assume that $K^{vv_4}$ contains the vertex $v_1$.
			Hence, there exists a vertex $w$ distinguished from $v_2$ such that $V(K^{vv_4})=\{v, v_1, v_3, v_4, w\}$ see Figure~6. However, the existing edge $vv_1 \in E(G)$ can directly replace $vv_4$ to obtain a $K$-copy in $G$, a contradiction.
			
			\item $V(K^{vv_4})\cap \{v_1,v_2\}= \emptyset$. We first claim that $v_3\in V(K^{vv_4})$, otherwise, we will get a copy of $K$ in $G$ by replacing $vv_4$ by $v_3v_4$, a contradiction. Hence, there are two vertices $v_5$ and $v_6$ as a type $\alpha_2$ vertex and  the $\alpha_3$ vertex respectively, such that $V(K^{vv_4}) = \{v, v_3, v_4, v_5, v_6\}$.  Using the same argument to $v_4$, we show that $N_G(v_i)\cap \{v_1,v_2\}=\emptyset$ for $i=5,6$.   We thus derive that $d_G(v_5) \ge 3$ and  $d_G(v, v_5) = 3$, see Figure~6. But it leads to a contradiction to $G$ is kite-saturated. In fact, we can consider $G + vv_5$. Observation \ref{obs:dist_type}  states $vv_5$ cannot be a type II edge of $K^{vv_5}$. Furthermore, since $v$ cannot be a type $\alpha_1$ vertex as shown above and then $v_5$ cannot be a type $\alpha_1$ vertex by Lemma \ref{lem:keylemma}, $vv_5$ cannot be a type I edge. Thus, we deduce that $K^{vv_5}$ does not appear in $G + vv_5$, contradicting that $G$ is kite-saturated. 
		\end{itemize} Therefore, $G$ contains an induced $H_5$, completing the proof.
	\end{proof}
	
	\begin{figure}[htbp]
		\centering
		\begin{minipage}[b]{0.4\textwidth}
			\centering
			\begin{tikzpicture}[scale=0.6, transform shape, line join=round, line cap=round, baseline=0, font=\Large]
				\coordinate (v3) at (0, 0);
				\coordinate (v1) at (-1.5, -1.5);
				\coordinate (v2) at (-1.5, 1.5);
				\coordinate (v) at (-3, 0);
				\coordinate (w) at (0, -1.5);
				\coordinate (v4) at (2, 0);
				
				\draw[black, thick] (v) -- (v1) -- (v3) -- (v2) -- cycle;
				\draw[black, thick] (v1) -- (w);
				\draw[black, thick] (w) -- (v3);
				\draw[black, thick] (v3) -- (v4);
				\draw[black, thick] (w) -- (v4);
				
				\draw[black, thick, dashed] (v) to[bend left=25] (v4);
				
				\fill[black] (v) circle (1.5pt) node[left=3pt] {$v$};
				\fill[black] (v1) circle (1.5pt) node[below=3pt] {$v_1$};
				\fill[black] (v2) circle (1.5pt) node[above=3pt] {$v_2$};
				\fill[black] (w) circle (1.5pt) node[below=3pt] {$w$};
				\fill[black] (v3) circle (1.5pt) node[above right=1pt] {$v_3$};
				\fill[black] (v4) circle (1.5pt) node[right=3pt] {$v_4$};
			\end{tikzpicture}
			\vspace{0.2cm}
			
			(a)
		\end{minipage}
		\hspace{1.5cm}
		\begin{minipage}[b]{0.4\textwidth}
			\centering
			\begin{tikzpicture}[scale=0.6, transform shape, line join=round, line cap=round, baseline=0, font=\Large]
				\coordinate (v3) at (0, 0);
				\coordinate (v4) at (1.5, -1.5);
				\coordinate (v5) at (3, 0);
				\coordinate (v6) at (1.5, 1.5);
				\coordinate (v1) at (-1.5, -1.5);
				\coordinate (v2) at (-1.5, 1.5);
				\coordinate (v) at (-3, 0);
				
				\draw[black, thick] (v) -- (v1) -- (v3) -- (v2) -- cycle;
				\draw[black, thick] (v3) -- (v4) -- (v5) -- (v6) -- cycle;
				\draw[black, thick] (v3) -- (v5);
				
				\draw[black, thick, dashed] (v) -- (v4);
				
				\fill[black] (v) circle (1.5pt) node[left=3pt] {$v$};
				\fill[black] (v1) circle (1.5pt) node[below=3pt] {$v_1$};
				\fill[black] (v2) circle (1.5pt) node[above=3pt] {$v_2$};
				\fill[black] (v3) circle (1.5pt) node[above=3pt] {$v_3$};
				\fill[black] (v4) circle (1.5pt) node[below=3pt] {$v_4$};
				\fill[black] (v5) circle (1.5pt) node[right=3pt] {$v_5$};
				\fill[black] (v6) circle (1.5pt) node[above=3pt] {$v_6$};
			\end{tikzpicture}
			\vspace{0.2cm}
			
			(b)
		\end{minipage}
		
		\caption{The possible local structures of $G$.}
		\label{fig:combined_structures}
	\end{figure}
	
	\begin{lemma}\label{lem:D2_structure}
		Let $G$ be a $2$-connected kite-saturated graph of order $n\ge 6$ with $D_2^2 \neq \emptyset$. Then one of the following holds:
		
		(i) if $E(G[D_2^2])\ne \emptyset$, then $|D_2^2|=2$ and its two vertices are adjacent;
		
		(ii) if $E(G[D_2^2])= \emptyset$, then any two vertices of $D_2^2$ are nonadjacent and have exactly two common neighbors.
		%
		%
	\end{lemma}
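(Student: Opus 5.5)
The basic tool is the saturation condition applied to the non-edge between two vertices of $D_2^2$, together with Observation~\ref{obs:dist_type} and Corollary~\ref{cor:type_II}. Fix $u\in D_2^2$ with non-adjacent neighbours $u_1,u_2$. By Lemma~\ref{lem:H5_existence}, $u$ lies in an induced $H_5$. We take two distinct vertices $u,w\in D_2^2$ and distinguish whether they are adjacent.

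\textbf{Part (i).} Suppose $E(G[D_2^2])\neq\emptyset$ and $uw$ is an edge with $u,w\in D_2^2$. Write $N_G(u)=\{w,a\}$ and $N_G(w)=\{u,b\}$; here $a\ne b$, since otherwise $a$ would be a common neighbour of $u$ and $w$, so the neighbours of $u$ would be adjacent and $u\notin D_2^2$. Since $u$ and $w$ have degree two, any kite copy through them is very constrained. Consider $G+ub$: the distance $d_G(u,b)=2$ via $w$, and $ub$ cannot be a type I edge with $u$ of type $\alpha_1$. One would then argue that $a$ and $b$ must be adjacent, or must share structure forced by the $H_5$ of Lemma~\ref{lem:H5_existence}; I expect $G[\{u,w,a,b,\dots\}]$ to be a pentagon-with-chord through $a,b$. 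Next suppose a third vertex $z\in D_2^2$ exists. Its two neighbours are non-adjacent, and we add the non-edge $zu$. By Observation~\ref{obs:dist_type}, if $d_G(z,u)\ge 3$ then $zu$ is type I, so one endpoint is of type $\alpha_1$ and has degree at least $3$ in $K^{zu}$. After the new edge is added, both $z$ and $u$ still have degree $3$ in $G+zu$. The type $\alpha_1$ vertex must then have its two $G$-neighbours adjacent, which contradicts $z,u\in D_2^2$. So $d_G(z,u)=2$ and $zu$ is type II. In a type II position the new edge lies in a triangle, so $z$ and $u$ need a common neighbour, and the rest of the kite uses the $C_4$ with one chord. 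Running this for both $zu$ and $zw$ forces $z$ to be adjacent to $a$ or $w$, and to $b$ or $u$, which gives a degree contradiction or a kite in $G$. This yields $|D_2^2|=2$.

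\textbf{Part (ii).} Suppose $D_2^2$ is independent and $u,w\in D_2^2$. By the same degree argument, $uw$ cannot be type I unless the type $\alpha_1$ endpoint has adjacent neighbours, which is impossible. Hence $uw$ is type II and $d_G(u,w)=2$, so $u$ and $w$ have at least one common neighbour. It remains to show they have two, that is, $N_G(u)=N_G(w)$. In $K^{uw}$ the edge $uw$ lies in a triangle $uwc$. Since each of $u,w$ has only one other $G$-neighbour, the kite structure forces the remaining vertices of the $C_4$ (which has exactly one chord) to use those other neighbours. Checking the two positions of $uw$ in the kite with a chord, we expect that either $u$ or $w$ would need degree $3$ in $G$, or that the other neighbour of $u$ equals the other neighbour of $w$. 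If the other neighbours $u'$ and $w'$ differ, I would derive a kite inside $G$ from the $H_5$ structure, or a violation of $u\in D_2^2$ because $c$ becomes adjacent to $u'$.

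\textbf{Main obstacle.} The hardest step is this type II case analysis in (ii): enumerating where $uw$ sits among the type II edges of the kite and excluding $u'\ne w'$ each time.
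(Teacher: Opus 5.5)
\textbf{Part (ii).} Your part (ii) points in the right direction, but its decisive step is only announced (``we expect''). That step, that two non-adjacent vertices of $D_2^2$ have the same neighbourhood, drives the whole lemma, so it should be proved first and then used for (i); this is how the paper proceeds. It is short once you know $uw$ is type II. Suppose $uw$ were an edge of the $4$-cycle of $K^{uw}$. Then one endpoint is a type $\alpha_2$ vertex. It has degree $2$ in $G$ and degree $3$ in the kite, so its two $G$-neighbours are exactly its other two kite-neighbours: the other type $\alpha_2$ vertex and a type $\alpha_1$ or $\alpha_3$ vertex. These two are adjacent, contradicting membership in $D_2^2$. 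Hence $uw$ is the chord, the type $\alpha_1$ and $\alpha_3$ vertices are two common neighbours, and $N_G(u)=N_G(w)$. Neither the $H_5$ from Lemma~\ref{lem:H5_existence} nor your discussion of $G+ub$ is needed.

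\textbf{Part (i): the genuine gap.} You run the argument for $zu$ and $zw$ as if both were non-edges, and then assert that one common neighbour per pair gives a degree contradiction or a kite in $G$. It does not. In that situation you only get $N_G(z)=\{a,b\}$ with $ab\notin E(G)$. This is a $5$-cycle $u\,w\,b\,z\,a$ through three degree-$2$ vertices; it contains no kite and violates no degree condition. It is ruled out only by the full equality $N_G(z)=N_G(u)$, which would put $w$ into $N_G(z)$.

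More importantly, you never treat the case that actually survives: $z$ adjacent to exactly one of $u,w$ (it cannot be adjacent to both), say $zw\in E(G)$ and $zu\notin E(G)$. Then $N_G(z)=N_G(u)=\{w,a\}$, hence $N_G(w)=\{u,z\}$, and $\{u,w,z\}$ is attached to the rest of $G$ only through $a$. This configuration needs its own argument. One option: $a$ is a cut vertex, since there are further vertices because $n\ge 6$, contradicting $2$-connectivity. The other, as the paper does with its $G+vw'$, is to check that adding the non-edge $wa$ creates no copy of $K$, contradicting saturation. Your proposal contains neither, so $|D_2^2|=2$ is not established.
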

	
	\begin{proof}
		Let $G$ be a $2$-connected kite-saturated graph of order $n \ge 6$ with $D_2^2 \neq \emptyset$. It is trivial for $|D_2^2|=1$. Thus, we now assume $|D_2^2|\ge 2$.  We first present a fundamental property for any pair of  nonadjacent vertices in $D_2^2$.
		\vspace{-2mm}
		\begin{claim}
		 If $u,v\in D_2^2$ and $uv\notin E(G)$, then $d_G(u,v)=2$ and $N_G(u) = N_G(v)$.
		\end{claim}\vspace{-2mm}
		\proof Let $u$, $v$ be two vertices of $D_2^2$ with   $uv \notin E(G)$.  Since $G$ is kite-saturated, $G + uv$ contains $K^{uv}$ as a copy of $K$.  By the choice of $u,v$, neither $u$ nor $v$ does not  a type $\alpha_1$ vertex in $K^{uv}$. Thus, $uv$ is a type II edge. It follows from Observation~\ref{obs:dist_type} that  $d_G(u, v) = 2$.  Hence, $u$ and $v$  belong to $C_4$ of $K^{uv}$. 
		
		We next show that they share  two common neighbors, i.e., $N_G(u) = N_G(v)$.   Observe that $u$ and $v$ share at least one common neighbor, say $w_1$.  Assume to the contrary that $N_G(u) \neq N_G(v)$. Together with their degree,  $w_1$ is indeed their unique common neighbor. This leads to $w_1$ belonging to $V(C_4)$. Hence, either $uw_1$ or $vw_1$ is the unique chord of $C_4$~(If $uw$ and $vw$  lie on $C_4$, then  $uv$ would be the chord of $C_4$ in $K^{uv}$. But this contradicts to our assumption.). By symmetry, without loss of generality, assume that $uw_1$ is the chord. But this brings that two neighbors of $u$ are adjacent, which contradicts the assumption $u\in D^2_2$.\qed
		By Claim 1, statement (ii) holds. So we may assume that $E(D_2^2)\neq \emptyset$; then there exist two vertices $u,v\in V(D_2^2)$ such that $uv\in E(G)$. If $|D_2^2|=2$, then (i) holds. Hence, assume that $|D_2^2|>2$. Let $w$ be the third vertex of $D_2^2$, distinct from $u$ and $v$. Together with the definition of $D_2^2$, we deduce that $|N_G(w)\cap \{u,v\}|\le 1$.
		Assume that $u\notin N_G(w)$. Then, by Claim 1, $u$ and $w$ have two common neighbors. Hence, $v\in N_G(w)$. It follows that $v$ is a common neighbor of $u$ and $w$. Let $w'$ be another common neighbor of them. Since $n\ge 6$, we also get $d_G(w')\ge 3$. In fact, considering $G+vw'$ yields a contradiction, as this addition does not create a copy of $K$. Thus, statement (i) is also true.
		
		Therefore, this completes the proof.
	\end{proof}

	\begin{lemma}\label{lem:n5_exact}
		Let $G$ be a $2$-connected kite-saturated graph with five vertices. Then we have $e(G) \ge f_1(5)$ with equality if and only if $G \cong H_5$.
	\end{lemma}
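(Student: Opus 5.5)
The plan is a direct finite analysis of $2$-connected graphs on five vertices, organized by number of edges. Here $f_1(5)=6$, so I need two things. First, no $2$-connected $K$-saturated graph on five vertices has at most five edges. Second, among six-edge $2$-connected graphs on five vertices, exactly $H_5$ is $K$-saturated.

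For the lower bound, a $2$-connected graph on five vertices has $\delta\ge 2$, so $e(G)\ge 5$. Equality forces $G$ to be $2$-regular, hence $G\cong C_5$. I rule out $C_5$ by adding a chord, say to $v_1v_2v_3v_4v_5$ the edge $v_1v_3$. The new graph has $6$ edges and contains only one triangle, whereas $K$ contains the two triangles of $K_4$ minus an edge. So $C_5+v_1v_3$ contains no copy of $K$, and $C_5$ is not saturated. Hence $e(G)\ge 6=f_1(5)$.

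For the equality case, I would list the $2$-connected graphs on five vertices with six edges. The degree sequence sums to $12$ with all degrees at least $2$, so it is either $(4,2,2,2,2)$ or $(3,3,2,2,2)$.
\begin{itemize}
\item Sequence $(4,2,2,2,2)$: the graph is the bowtie (two triangles sharing a vertex), which has a cut vertex and is excluded.
\item Sequence $(3,3,2,2,2)$, degree-$3$ vertices adjacent: the graph is $H_5$ ($C_5$ plus one chord).
\item Sequence $(3,3,2,2,2)$, degree-$3$ vertices nonadjacent: the graph is $K_{2,3}$.
\end{itemize}
Checking $K_{2,3}$: add an edge between two vertices of the $3$-side. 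The result has $7$ edges and its triangles all contain that new edge, so any two triangles share an edge. This forces the $K_4^-$ (namely $K_4$ minus an edge) to be $\{a,b,u,w\}$, where $u,w$ are the new edge's endpoints and $a,b$ the $2$-side. For a kite we then need a pendant vertex attached to a degree-two vertex of this $K_4^-$, namely $a$ or $b$. The remaining vertex $z$ is indeed adjacent to $a$, so a kite does appear. I would therefore instead test adding the edge $ab$ between the two $2$-side vertices. This gives $K_1\vee$-like structure whose triangles are $abu,abw,abz$. The $K_4^-$ is, e.g., $\{a,b,u,w\}$ with degree-two vertices $u,w$, whose other neighbors lie only in $\{a,b\}$. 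So no pendant exists and $K_{2,3}$ is not saturated.

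Checking $H_5$: label it as the cycle $v_1\dots v_5$ with chord $v_2v_5$. It is $K$-free because it has a single triangle. Up to the reflection symmetry, there are three non-edge types: $v_1v_3$, $v_3v_5$, $v_2v_4$, and $v_1v_4$. For each, I exhibit a $K_4^-$ with a pendant. For example, adding $v_1v_3$ gives triangles $v_1v_2v_5$ and $v_1v_2v_3$ sharing $v_1v_2$, with $v_5$ pendant-attached to $v_4$... I must take care that the pendant attaches to a degree-two vertex of the $K_4^-$, here $v_5$ or $v_3$, and $v_4$ serves as that pendant. The other non-edges are handled similarly. This step is the main bookkeeping obstacle; I would verify each of the three non-edge classes explicitly.
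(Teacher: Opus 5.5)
Your proof is correct, but it takes a different route from the paper.

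\textbf{The paper's route.} It first forces a triangle: the only triangle-free $2$-connected graphs on five vertices are $C_5$ and $K_{2,3}$, and neither is saturated. It then splits on the degrees of the triangle vertices $y_1,y_2,y_3$. If all three have degree at least $3$, then $e(G)\ge 7$. Otherwise a degree-$2$ vertex $y_1$ is used, and the proof splits on whether $y_2,y_3$ have a second common neighbour. If they do, $G$ is $K_2\vee 3K_1$ with seven edges; if not, $G$ is $H_5$.

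\textbf{Your route.} You enumerate by edge count. $\delta\ge 2$ gives $e(G)\ge 5$, with only $C_5$ at five edges. At six edges the degree sequence is $(4,2,2,2,2)$ or $(3,3,2,2,2)$, which leaves only the bowtie (excluded by its cut vertex), $H_5$, and $K_{2,3}$. So you never need to examine graphs with seven or more edges.

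\textbf{What each buys.} Your identification of $H_5$ is actually derived, whereas the paper's last step (``then $y_4y_5\in E(G)$, so $G\cong H_5$'') is only asserted. You also exhibit explicit witnessing non-edges where the paper says only ``it is verified'':
\begin{itemize}
\item $v_1v_3$ for $C_5$, since the result has a single triangle;
\item the edge joining the two degree-$3$ vertices for $K_{2,3}$. The result is $K_2\vee 3K_1$, not a ``$K_1\vee$-like'' graph. Every $K_4^-$ in it has degree-$2$ vertices whose neighbours lie inside the $K_4^-$, so no pendant edge is available.
\end{itemize}
In exchange, the paper's route gives structural information that your counting bypasses, such as the $7$-edge saturated graph $K_2\vee 3K_1$.

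\textbf{Minor slips in the $H_5$ check.}
\begin{itemize}
\item $H_5$ has four non-edges, which form two classes under the reflection: $\{v_1v_3,v_1v_4\}$ and $\{v_2v_4,v_3v_5\}$, not three classes.
\item The class you left as ``similar'' is settled as follows. In $H_5+v_2v_4$, the set $\{v_2,v_3,v_4,v_5\}$ spans a $K_4^-$ missing $v_3v_5$, and $v_5v_1$ is the pendant edge.
\item Strictly, saturation of $H_5$ is not needed for the lemma as stated, since the ``if'' direction only requires $e(H_5)=6$. It is, however, what shows the bound is attained.
\end{itemize}
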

	
	\begin{proof}
		{}Let $G$ be a 2-connected kite-saturated graph of order $n=5$. We claim that $G$ must contain a $C_3$. Otherwise, by the property of triangle-free graphs and the $2$-connectivity of $G$, $G$ is isomorphic to either $K_{2,3}$ or $C_5$. However, it is verified that neither $K_{2,3}$ nor $C_5$ is kite-saturated. Thus, $G$ contains a $C_3$.
		
		Let $y_1, y_2, y_3$ be the three vertices of $C_3$ in $G$, and let $y_4$ and $y_5$ be the remaining two vertices of $G$. Since $G$ is 2-connected, its minimum degree is at least two.
		
		If $d_G(y_i) \ge 3$ for all $i \in \{1, 2, 3\}$, then the sum of degrees in $G$ is at least $ 13$. This implies that $e(G) \ge \lceil 13/2 \rceil = 7 > f_1(5) = 6$.  Thus, we may assume that at least one vertex of the triangle, say $y_1$, has degree two, which from the connectivity of $G$ infers that $d_G(y_1,y_i)=2$ for $i=4,5$.
		
		If $y_2$ and $y_3$ have a common neighbor distinct from $y_1$, then by the choice of $G$, we deduce that $G\cong 3 K_{1}\vee K_2$, otherwise, $G$ contains a copy of $K$. This leads to $e(G)=7>6$.
		If $y_1$ is the unique common neighbor of $y_2$ and $y_3$, then $y_4y_5\in E(G)$. It follows that  $G\cong H_5$. 
	\end{proof}
	
	\begin{lemma}\label{lem:n6_exact}
		Let $G$ be a $2$-connected kite-saturated graph with six vertices Then we have $e(G) \ge f_1(6)$ with equality if and only if $G \in \{F_6^1, F_6^2, F_6^3\}$.
	\end{lemma}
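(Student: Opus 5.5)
The plan is to show that a $2$-connected kite-saturated graph $G$ on six vertices has at least $8=f_1(6)$ edges, and then to classify the graphs with exactly $8$ edges. Since $G$ is $2$-connected, $\delta(G)\ge 2$. With $e(G)\le 8$ the degree sum is at most $16$, so at least two vertices have degree $2$; the degree sequence is then constrained to one of a handful of options, for instance $(2,2,3,3,3,3)$, $(2,2,2,3,3,4)$, $(2,2,2,2,4,4)$, $(2,2,2,2,3,5)$, and so on. For the lower bound, I would assume $e(G)\le 7$, so that at least four vertices have degree $2$, and derive a contradiction. The main tools are Observation~\ref{obs:dist_type}, Lemma~\ref{lem:keylemma} and Corollary~\ref{cor:type_II}: any non-adjacent pair $x,y$ with $d_G(x)\ne 2$ and $d_G(y)\ge 3$ must lie at distance exactly $2$ and appear in a triangle-based kite.

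\textbf{Step 1: degree-$2$ vertices.} I would split these according to the partition $D_2=D_2^1\cup D_2^2$.

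If $D_2^2\neq\emptyset$, Lemma~\ref{lem:H5_existence} gives an induced $H_5$ on $\{v,v_1,v_2,v_3,v_4\}$. This accounts for $6$ edges, and the sixth vertex $u$ needs at least two neighbours by $2$-connectivity, so $e(G)\ge 8$. For equality, $d_G(u)=2$ and no further edges are present. I would then enumerate where the two neighbours of $u$ can sit in the $H_5$, up to the symmetry of $H_5$. For each placement I would check two things: that $G$ is $K$-free, and that every non-edge completes a kite. The aim is to see that exactly the placements giving $F_6^1$, $F_6^2$ and $F_6^3$ survive. Lemma~\ref{lem:D2_structure} restricts the possibilities quickly; for example, $u$ together with $v$ must either be adjacent or have the same two neighbours.

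If $D_2=D_2^1$, every degree-$2$ vertex lies in a triangle whose other two vertices are adjacent. I would show that two such vertices cannot share a triangle, since otherwise $2$-connectivity fails, and that each such triangle forces its two other endpoints to have degree at least $3$. A count then gives $e(G)\ge 8$, and in the equality case the structure is essentially two triangles glued to a small core. I expect $F_6^1$ to arise here: its top two vertices are degree-$2$ vertices whose neighbours are adjacent, through the edge $M_1M_2$.

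\textbf{Step 2: ruling out $e(G)\le 7$.} With at least four vertices of degree $2$ and a $2$-connected graph, $G$ is a subdivision of a multigraph with few branch vertices: a $6$-cycle, or a theta graph with two branch vertices of degree $3$, or similar. In each case I would exhibit a non-edge $xy$ such that adding it produces no kite. The natural candidate is a pair at distance $\ge 3$ where neither endpoint can be a type $\alpha_1$ vertex; by Observation~\ref{obs:dist_type} and Lemma~\ref{lem:keylemma}, adding such an edge creates no kite. Alternatively, the graph has too few triangles to contain a $K_4^-$ after adding one edge.

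\textbf{Main obstacle.} The hardest step is the equality analysis in Step 1: verifying saturation for each candidate $8$-edge graph and showing that no other configuration works. I would do this by fixing the induced $H_5$ (or, in the $D_2^1$ case, the triangle structure) and running a short exhaustive case check on the position of the sixth vertex, using $K$-freeness to eliminate most placements immediately.
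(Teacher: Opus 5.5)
Your route is the paper's. When $D_2^2\neq\emptyset$ you take the induced $H_5$ from Lemma~\ref{lem:H5_existence}, count six edges plus at least two at the sixth vertex $u$, and enumerate where $u$ can attach. When every degree-$2$ vertex lies in $D_2^1$ you argue separately. The gap is in how you expect the extremal graphs to fall into these two cases.

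$F_6^1$ is not in the case $D_2=D_2^1$. Its vertices $B_1,B_2$ have degree $2$, with neighbour pairs $\{M_1,B_2\}$ and $\{M_2,B_1\}$, and neither pair is adjacent. So $D_2^2=\{B_1,B_2\}$, which is case (i) of Lemma~\ref{lem:D2_structure}. In fact all three extremal graphs come out of the $H_5$ case. Write $H_5$ as the triangle $h_1h_2h_5$ plus the path $h_2h_3h_4h_5$. Then $v\in\{h_3,h_4\}$; by symmetry take $v=h_3$. Since $d_G(v)=2$, $u$ is joined to two of $h_1,h_2,h_4,h_5$:
\begin{itemize}
\item $\{h_2,h_5\}$ gives $F_6^1$.
\item $\{h_2,h_4\}$ gives $F_6^2$.
\item $\{h_4,h_5\}$ gives $F_6^3$.
\item $\{h_1,h_2\}$ and $\{h_1,h_5\}$ create a kite in $G$. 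For $\{h_1,h_2\}$, take the diamond on $\{u,h_1,h_2,h_5\}$ with chord $h_1h_2$ and pendant $h_5h_4$.
\item For $\{h_1,h_4\}$, the graph $G+uh_3$ contains no diamond, so $G$ is not saturated.
\end{itemize}
In $F_6^1$ and $F_6^3$ the vertex $u$ lies in $D_2^1$. So your shortcut that ``$u$ and $v$ are adjacent or share both neighbours'' holds only when $u\in D_2^2$. Used as a general restriction, it would throw away two of the three extremal graphs.

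What the case $D_2=D_2^1$ actually needs is a proof that it contains \emph{no} saturated graph with $e(G)=8$, and your plan does not supply one. Your own observations give it quickly. The $t$ degree-$2$ vertices are pairwise nonadjacent, and each is joined to both ends of an edge whose ends have degree at least $3$. Suppose $e(G)=8$:
\begin{itemize}
\item If $t\le 1$, the degree sum is at least $17$, a contradiction.
\item If $t\ge 4$, the other two vertices are adjacent and both are joined to every degree-$2$ vertex, so $e(G)=2t+1\ge 9$.
\item If $t=3$, the other three vertices span exactly two edges, forming a path. Its middle vertex is adjacent to all three degree-$2$ vertices, so it has degree $5$. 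This contradicts the forced degree sequence $(2,2,2,3,3,4)$.
\item If $t=2$, the sequence $(2,2,3,3,3,3)$ forces $G$ to be two disjoint triangles $u_1ab$ and $u_2cd$ plus the edges $ac$ and $bd$. This graph is diamond-free and $d_G(u_1,u_2)=3$, so $G+u_1u_2$ contains no kite.
\end{itemize}
The paper reaches the same conclusion a different way: it fixes a triangle $uv_1v_2$ and splits on $G[\{v_3,v_4,v_5\}]$. Once this nonexistence argument is in place, your Step~1 counts already give $e(G)\ge 8$ in every case, so Step~2 becomes unnecessary.
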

	
	\begin{proof}
		Suppose that $G$ is a 2-connected kite-saturated graph of order $n = 6$. We first establish the diameter of $G$. 
		Lemma \ref{lem:diam_4} infers $\text{diam}(G) \le 4$.
		If $\text{diam}(G) = 4$, then there is an induced path with length four, say $P_5=v_1v_2v_3v_4v_5$. Hence, $V(G)\setminus V(P_5)$ contains exactly one vertex, say $v_0$. Note that $d_G(v_5),d_G(v_1)\ge2$. It results in  $v_0v_1,v_0v_5\in E(G)$. But, this leads to a contradiction to $\text{diam}(G) = 4$. Thus, we  arrive at $\text{diam}(G) \in \{2, 3\}$.
		
		Observe that $e(G) \ge \lceil (6 \times 3) / 2 \rceil = 9 > f_1(6)$  if $\delta(G) \ge 3$. Hence the remaining case is that there exist at least one vertex with degree two.  Let $v$ be a vertex of $G$ with $d_G(v) = 2$.
		
		If $v \in D_2^2$, then by Lemma 3.2, $G$ contains an induced subgraph $H_5$. Let $H_5 \cong G[\{v, v_1, v_2, v_3, v_4\}]$. Consider the remaining vertex, say $v_5$. Since $e(H_5) = 6$ and $G$ is 2-connected, $v_5$ must have at least two neighbors in $H_5$. If $d_G(v_5) \ge 3$, then $e(G) \ge 6 + 3 = 9 > f_1(6)$. Hence, we must have exactly $d_G(v_5) = 2$. Together with the established bound $\text{diam}(G) \in \{2, 3\}$ and the property of the kite-saturated graph, by verifying the possible connections of $v_5$ to the vertices of $H_5$ it is easily verified that equality holds if and only if $G \in \{F_6^1, F_6^2, F_6^3\}$.
		
		From now on, we assume that all vertices of degree two belong to $D^1_2$. Let $u \in D^1_2$. Thus there is a $C_3$ containing $u$, label its three vertices as $u, v_1, v_2$ and the remaining vertices as $v_3, v_4, v_5$. Since any path from $u$ to $v_i$ ($i \in \{3,4,5\}$) must pass through the cut-set $\{v_1, v_2\}$, we deduce that $2 \le d_G(u, v_i) \le 3$. If $G[\{v_3, v_4, v_5\}]$ contains no edges, then, from $2$-connectedness of $G$, we derive that $G \cong K_2 \vee 4K_1$. But this leads to $e(G) > f_1(6)$. Consequently, we assume that $G[\{v_3, v_4, v_5\}]$ contains at least one edge. It follows that $G[\{v_3, v_4, v_5\}] \cong K_2 \cup K_1, P_3$ or $C_3$. Together with $\text{diam}(G) \in \{2, 3\}$, it is not difficult to check that either $e(G) > f_1(6)$ or $G$ contains a copy of $K$.
		
		The proof is therefore complete.
	\end{proof}
	
	\begin{lemma}\label{lem:n78_exact}
		Let $G$ be a $2$-connected kite-saturated graph of order $n\in \{7, 8\}$. Then $e(G) > f_1(n)$.
	\end{lemma}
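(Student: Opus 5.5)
Since $f_1(7)=9$ and $f_1(8)=11$, the statement amounts to showing that no $2$-connected kite-saturated graph on $7$ vertices has at most $9$ edges, and none on $8$ vertices has at most $11$ edges. I will argue by contradiction. Suppose $G$ is a counterexample, and let $n_2=|D_2|$. Since $\delta(G)\ge 2$ by $2$-connectivity, we have $2e(G)\ge 2n_2+3(n-n_2)$. This gives $n_2\ge 3$ when $n=7$ and $n_2\ge 2$ when $n=8$, and the same count bounds the number of vertices of degree at least $4$. So $G$ has many degree-two vertices and only a little excess degree, and everything below exploits that.

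\textbf{Step 1: $D_2^2\neq\emptyset$.} If $D_2^2\neq\emptyset$, Lemma~\ref{lem:H5_existence} gives an induced $H_5$ on $\{v,v_1,\dots,v_4\}$, which carries $6$ edges. The remaining $n-5$ vertices ($2$ or $3$) may carry only $3$ further edges when $n=7$, and $5$ when $n=8$. By $2$-connectivity, each outside vertex has degree at least $2$, and the outside set sends at least two edges to the $H_5$ through distinct attachment vertices.
\begin{itemize}
\item For $n=7$, the two outside vertices $a,b$ with $3$ edges must form a path $v_i\,a\,b\,v_j$, so $G$ is $H_5$ with an ear of length $3$.
\item For $n=8$, the outside edges form either one ear of length $4$ or two ears of length $2$ plus one extra edge. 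The configurations with $5$ edges are of this kind.
\end{itemize}
For each configuration I will exhibit a non-edge whose addition creates no kite. The natural candidate is an edge between an interior ear vertex and a far vertex of $H_5$, or the edge $av_k$ with $d_G(a,v_k)\ge 3$, analysed via Observation~\ref{obs:dist_type} and Lemma~\ref{lem:keylemma}. Lemma~\ref{lem:D2_structure} further restricts the cases: the degree-two ear vertices lie in $D_2^2$, so they are either a single adjacent pair or pairwise share both neighbours. This rules out most ear placements immediately, since a long ear gives adjacent $D_2^2$ vertices and forces $|D_2^2|=2$, which conflicts with $v$ also being in $D_2^2$.

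\textbf{Step 2: $D_2=D_2^1$.} Here every degree-two vertex $u$ lies in a triangle $uv_1v_2$, and $\{v_1,v_2\}$ separates $u$. For $u\in D_2^1$ and any vertex $w$ at distance $2$ from $u$ through $v_1$ only, consider $G+uw$. Since $u$ has only two neighbours, $u$ can be the type $\alpha_0$ vertex, or play a role in the $C_4$ together with $v_1,v_2$. Combined with Corollary~\ref{cor:type_II}, this forces $v_1v_2$ to lie in a $K_4^-$ or in many triangles. The resulting density contradicts the edge budget unless the triangles all share a common vertex. That shared vertex would have degree close to $n-1$, which makes $G$ resemble $F_n$. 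But $F_n$ has a cut vertex, so $2$-connectivity kills this case, and the remaining dense alternatives exceed $f_1(n)$ edges.

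I expect Step 1 for $n=8$ to be the main obstacle, because the number of ear placements on $H_5$ is largest there. I would organise that case by the pair of attachment vertices on $H_5$, reduce by the automorphism of $H_5$ that swaps its two sides, and use Lemma~\ref{lem:diam_4} to discard placements that create distance $5$. For each surviving placement I would then check one specific non-edge directly.
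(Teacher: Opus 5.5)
Your Step~1 is the paper's argument: induced $H_5$ from Lemma~\ref{lem:H5_existence}, the edge budget, then Lemma~\ref{lem:D2_structure}(i) applied against $v\in D_2^2$. For $n=7$ it is complete.

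The genuine gap is Step~2, the case $D_2^2=\emptyset$. Its key assertions are never derived: that $v_1v_2$ is forced into a $K_4^-$ or into many triangles, that the budget then forces all triangles through one vertex, and that $G$ then ``resembles'' $F_n$. Resembling $F_n$ does not produce a cut vertex of $G$, so $2$-connectivity is never actually contradicted.

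The paper does a concrete count instead. Fix $u\in D_2^1$ in a triangle $uv_1v_2$. Since $N_G(u)=\{v_1,v_2\}$, every edge leaving $A=V(G)\setminus\{u,v_1,v_2\}$ ends in $\{v_1,v_2\}$, so $e(G)\ge 3+e(A,\{v_1,v_2\})+e(G[A])$. For $n=7$ one runs through the nine graphs on four vertices with at most four edges. In each, $\delta(G)\ge 2$ and $2$-connectivity force enough edges out of $A$ that either $e(G)\ge 10$, or some vertex of $A$ has degree $2$ with nonadjacent neighbours, i.e.\ lies in $D_2^2$, a contradiction.

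The paper omits $n=8$, and there counting alone is not enough. Take the triangle $uv_1v_2$, two triangles $abc$ and $ade$ sharing $a$, and the edges $v_1b$, $v_2d$. This is $2$-connected with $11$ edges and $D_2^2=\emptyset$. It must be excluded by saturation: it is diamond-free and $d_G(u,a)=3$. By Observation~\ref{obs:dist_type}, $ua$ would have to be the type I edge, whose kite needs a diamond already present in $G$. Step~2 needs an enumeration of this kind plus such saturation checks, not a density heuristic.

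Your $n=8$ list in Step~1 is also wrong. Three outside vertices carrying at most five edges give exactly these configurations:
\begin{itemize}
\item a path $v_iabcv_j$, possibly with one extra edge from $\{a,b,c\}$ into $H_5$;
\item an ear $v_icv_j$ together with an ear $v_kabv_l$;
\item a triangle $abc$ joined to $H_5$ by two edges from distinct vertices to distinct vertices.
\end{itemize}
Lemma~\ref{lem:D2_structure} disposes of the first two. When the extra edge goes to $b$, you need the fact from its proof that nonadjacent vertices of $D_2^2$ have equal neighbourhoods, not part (i). The triangle case creates no new vertex of $D_2^2$, since the free vertex $c$ lies in $D_2^1$. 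It is excluded by the same diamond-free argument: the only triangles are $abc$ and the triangle of $H_5$, and $c$ is at distance at least $3$ from any vertex of $H_5$ other than the two attachment points.
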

	
	\begin{proof}
		
		Note that $G$ is $2$-connected. Hence, $\delta(G) \ge 2$.
		
		\medskip
		\noindent\textbf{Case 1.} $n = 7$. In this case, $f_1(7) = 9$. Suppose that $G$ is a 2-connected kite-saturated graph with seven vertices. If $D_2^2 \neq \emptyset$, then there is a vertex $v \in D_2^2$. From Lemma \ref{lem:H5_existence}, we obtain that $G$ contains an induced subgraph $H_5$ including the vertex $v$. For short, let $V(H_5) = \{v, v_1, v_2, v_3, v_4\}$ such that $vv_1v_2v_3v$ is the 4-cycle and $v_4$ is the vertex with two neighbors $v_2$ and $v_3$ in $H_5$. Set $A = V(G) \setminus V(H_5) = \{y_1, y_2\}$. Since $e(H_5) = 6$ and $f_1(7) = 9$, in order to verify $e(G) > f_1(7)$, it suffices to check $e(G) \ge 10$. Note that $e(A, H_5) \ge 2$ by the connectivity of $G$. Hence, we have  $$e(G) \ge e(H_5) + e(A, H_5) + e(G[A]) \ge 6 + 2 + e(G[A]).$$ Evidently, $G[A] \in \{2K_1, K_2\} $. Observe that if $G[A] \cong 2K_1$, then, $e(G) \ge 10$ holds by the assumption that $G$ is $2$-connected. We now consider the case $G[A] \cong K_2 $. Assume by contradiction that $e(G) \le 9$.  By the connectivity of $G$ and $e(G) = 9$, we get that two vertices of  $A$ belong to $D_2^2$. But, this yields a contradiction to (i) of Lemma \ref{lem:D2_structure}.
		
		Now we assume that all vertices of degree two belong to $D^1_2$. Then there is a vertex $u \in D^1_2$ which is contained in a $C_3$. For short, let $V(C_3) = \{u, v_1, v_2\}$, and set $A = V(G) \setminus V(C_3) = \{y_1, y_2, y_3, y_4\}$. Since $e(C_3) = 3$ and $f_1(7) = 9$, in order to verify $e(G) > f_1(7)$, it suffices to check $e(G) \ge 10$. Note that $e(A, C_3) \ge 2$ by the connectivity of $G$, which infers that 
		$$e(G) \ge e(C_3) + e(A, C_3) + e(G[A]) \ge 3 + 2 + e(G[A]).$$ If $e(G[A]) \ge 5$, then conclusion is trivial. Consequently, we assume $e(G[A]) \le 4$. Evidently, $G[A] \in \{C_4, C_3^+, P_4, K_{1,3}, K_3\cup K_1, P_3 \cup K_1, 2K_2, K_2 \cup 2K_1, 4K_1\}$. 
		
		Observe that if $G[A] \cong K_2 \cup 2K_1, 4K_1, K_3\cup K_1$, then, $e(G) \ge 10>f_1(7)$ holds by the assumption that $G$ is 2-connected.
		In the remaining case $G[A] \in \{C_4, C_3^+, P_4,  K_{1,3}, P_3 \cup K_1, 2K_2\}$, assume by contradiction that $e(G) \le 9$. If $G[A] \in \{C_4, C_3^+, K_{1,3}, P_4, P_3 \cup K_1, 2K_2\}$, then, by our assumption and the connectivity of $G$, $e(G) = 9$. But this leads to that there is a vertex of $A$ belonging to $D_2^2$. Hence,  we get  a contradiction to our assumption that all vertices of degree two belong to $D^1_2$.
		
		\medskip
		\noindent
		\textbf{Case 2.} $n = 8$. The same argument as in the case $n=7$ can be applied to the case  to show $e(G) > f_1(n)$. Since the verification is rather tedious, it is omitted here.
	\end{proof}
	
	\begin{lemma} \label{lem:n9}
		Let $G$ be a $2$-connected kite-saturated graph of order $n=9,10$ with $D^2_2\neq\emptyset$. Then $e(G)> f_1(n)$.
	\end{lemma}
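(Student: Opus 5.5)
We follow the template of Lemma~\ref{lem:n78_exact}. Fix $v\in D_2^2$. By Lemma~\ref{lem:H5_existence} there is an induced copy of $H_5$ on $\{v,v_1,v_2,v_3,v_4\}$; label it so that $vv_1v_2v_3v$ is the $4$-cycle and $v_4$ is adjacent to $v_2,v_3$. Since $d_G(v)=2$, $v$ has no neighbours outside this $H_5$. Put $A=V(G)\setminus V(H_5)$, so $|A|=n-5\in\{4,5\}$. Then
\[
e(G)=e(H_5)+e(A,V(H_5))+e(G[A])=6+e(A,V(H_5))+e(G[A]).
\]
Since $f_1(9)=12$ and $f_1(10)=14$, it suffices to show that $e(A,V(H_5))+e(G[A])\ge 7$ when $n=9$, and $\ge 9$ when $n=10$. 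Assume the contrary, i.e.\ $e(G)\le f_1(n)$, and derive a contradiction.

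\textbf{Step 1: degree counting.} Since $G$ is $2$-connected, $\delta(G)\ge 2$. Double counting gives
\[
2e(G[A])+e(A,V(H_5))=\sum_{a\in A}d_G(a)\ge 2|A|.
\]
Moreover $e(A,V(H_5))\ge 2$, and each component of $G[A]$ sends at least two edges to $V(H_5)$, landing on at least two distinct vertices. Combining these with the assumed upper bound on $e(G)$ forces almost every vertex of $A$ to have degree exactly $2$, with at most one or two vertices of degree $3$ (for $n=9$ the slack is $12-6-\lceil 8/2\rceil$ in the crude count). A vertex of $A$ of degree $2$ whose neighbours are nonadjacent lies in $D_2^2$. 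Lemma~\ref{lem:D2_structure} then says either $D_2^2$ consists of exactly two adjacent vertices, or $D_2^2$ is independent and any two of its vertices have exactly the same two neighbours. Since $v\in D_2^2$ with $N_G(v)=\{v_1,v_3\}$, the independent alternative forces every $D_2^2$-vertex of $A$ to be adjacent to both $v_1$ and $v_3$. The adjacent alternative is impossible unless $|D_2^2|=2$, and the vertex adjacent to $v$ would have to be $v_1$ or $v_3$, which have degree at least $3$; so this alternative gives a contradiction whenever a vertex of $A$ lands in $D_2^2$.

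\textbf{Step 2: enumerate $G[A]$.} With these constraints there are only a few shapes for $G[A]$, namely small paths, triangles, or isolated vertices. The possible shapes are:
\begin{itemize}
\item a union of triangles and edges in which each vertex of degree $2$ lies in $D_2^1$, which forces triangles;
\item isolated vertices, each adjacent to both $v_1$ and $v_3$.
\end{itemize}
For each shape, either the edge count already exceeds the bound, or we exhibit a copy of $K$ in $G$, or we find a non-edge whose addition creates no kite. For the last option we use Observation~\ref{obs:dist_type} together with Corollary~\ref{cor:type_II}. For instance, two isolated $A$-vertices $a,a'$ adjacent to $v_1,v_3$ create a $K_{2,3}$-like structure on $\{v,a,a'\}\cup\{v_1,v_3\}$. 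Adding the edge $v_1v_3$ can then be shown to be a type II addition needing a triangle, and one checks which vertices can serve. Alternatively, the extra neighbours give a kite directly via the triangle $v_2v_3v_4$ plus the path through $v_1$. A triangle component $T$ of $G[A]$ attached to $H_5$ needs two attachment edges. A vertex $t\in T$ of degree $3$ attached to $H_5$ and non-adjacent to $v$ satisfies $d_G(t,v)\ge 2$. Corollary~\ref{cor:type_II} then demands $d_G(t,v)=2$ and a type II kite, and we check this fails or produces a kite already in $G$.

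\textbf{Main obstacle.} Step 2 is a finite but tedious case analysis, especially for $n=10$, where $|A|=5$ and there are more configurations. We expect the difficult configurations to be those in which $A$ induces triangles hanging off $H_5$ by two edges. These are the local pieces of $F_n^{\rm odd}$, which is nearly extremal, so the contradiction must come from the presence of $v\in D_2^2$. Here we would first add the edge from $v$ to a far vertex of a hanging triangle, whose distance is $3$ or more. By Observation~\ref{obs:dist_type} this edge must be type I, and by Lemma~\ref{lem:keylemma} neither endpoint can play the $\alpha_1$ role. This contradiction is the argument used at the end of the proof of Lemma~\ref{lem:H5_existence}.
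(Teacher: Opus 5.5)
Your skeleton matches the paper's proof. You take the induced $H_5$ from Lemma~\ref{lem:H5_existence} and write $e(G)=6+e(A,V(H_5))+e(G[A])$, using $2$-connectivity to bound the attachment edges. You then enumerate $G[A]$ and derive a contradiction with Lemma~\ref{lem:D2_structure} from a degree-$2$ vertex of $A$ with non-adjacent neighbours. Such a vertex is not adjacent to $v$, and if it has a neighbour inside $A$ its neighbourhood cannot be $\{v_1,v_3\}$.

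However, the proposal stops where the proof starts. Step~2 is never carried out, and the shapes you list are not the ones that occur. Put $X=e(A,V(H_5))$ and $Y=e(G[A])$.
\begin{itemize}
\item For $n=9$, the constraints $X+Y\le 6$, $X\ge 2$, $2Y+X\ge 8$ leave only $G[A]\in\{P_4,K_{1,3},C_4,C_3^+,P_3\cup K_1,2K_2\}$ with tight attachments. Each of these already contains such a vertex, so Lemma~\ref{lem:D2_structure} finishes the case, as in the paper. Isolated $A$-vertices on $\{v_1,v_3\}$ and hanging triangles never survive the count. Your remark that they would give a kite via $v_2v_3v_4$ is also unsupported.
\item For $n=10$, which the paper omits, the degree sum of $A$ has slack $4$. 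So your claim of at most one or two degree-$3$ vertices in $A$ is false. Exactly three configurations survive Lemma~\ref{lem:D2_structure}, each with $e(G)=14$:
  \begin{itemize}
  \item two triangles sharing a vertex, attached at one non-centre vertex of each triangle;
  \item $G[A]\cong H_5$, attached at its two degree-$2$ vertices off the triangle;
  \item a triangle $abc$ with tail $cde$, where $d,e$ are joined to a common $h$ and $a$ is joined to some $h'\neq h$.
  \end{itemize}
\end{itemize}
Lemma~\ref{lem:D2_structure} alone does not exclude these three, so some extra argument like yours is genuinely needed.

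Your far-vertex argument does dispose of them, but its justification must be fixed.
\begin{itemize}
\item Lemma~\ref{lem:keylemma} requires the endpoint to have degree at least $3$. Corollary~\ref{cor:type_II} requires $d_G(x)\neq 2$ and $d_G(y)\ge 3$. Neither applies to a pair containing $v$, since $d_G(v)=2$; in particular your use of Corollary~\ref{cor:type_II} for the pair $t,v$ is invalid. The reason $v$ is never the type $\alpha_1$ vertex is that $N_G(v)=\{v_1,v_3\}$ is independent, as in the proof of Lemma~\ref{lem:H5_existence}.
\item The far endpoint must have degree at least $3$. The apex of a triangle hanging by two edges has degree $2$ and lies in $D_2^1$, and Lemma~\ref{lem:keylemma} says nothing about it. Suitable choices are the shared vertex of the two triangles, a degree-$3$ vertex of the triangle in the $H_5$ case, and $c$ in the tailed triangle. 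None of these is adjacent to $V(H_5)$. Since no vertex of $A$ is adjacent to $v$, such vertices are at distance at least $3$ from $v$, so Observation~\ref{obs:dist_type} forces a type I edge and you get the contradiction.
\item The claim $d_G(v_1)\ge 3$ is not automatic. $v_1$ has degree $2$ in $H_5$, and $D_2^2=\{v,v_1\}$ is exactly alternative (i) of Lemma~\ref{lem:D2_structure}. Your conclusion survives, because any $D_2^2$-vertex adjacent to $v$ lies in $\{v_1,v_3\}\subseteq V(H_5)$. The stated reason, however, is wrong.
\end{itemize}
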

	\begin{proof}
		Suppose that $G$ is a $2$-connected kite-saturated graph with nine vertices and  $D^2_2\neq\emptyset$. Hence, there is a vertex $v\in D^2_2$. From Lemma \ref{lem:H5_existence}, we obtain that $G$ contains an induced subgraph $H_5$ including the vertex $v$. For short, let $V(H_5)=\{v,v_1,v_2,v_3,v_4\}$ such that $vv_1v_2v_3v$ is the $4$-cycle  and $v_4$ is the vertex with two neighbors $v_2$ and $v_3$ in $H_5$.  Clearly, $e(H_5)=6$.
		
		We now consider the case $n=9$, Set $A=V(G)\setminus V(H_5)=\{y_1,y_2,y_3,y_4\}$.  Since $e(H_5)=6$ and $f_1(9)=12$, in order to verify $e(G)>f_1(9)$, it suffices  to check $e(G)\ge 13$. Note that $e(A,H_5)\ge 2$ by the connectivity of $G$. So $e(G)\ge e(H_5)+e(A,H_5)+e(G[A])\ge 6+2+e(G[A])$. If $e(G[A])\ge 5$, then conclusion is trivial. Consequently, we assume $e(G[A])\le 4$. Evidently, $G[A]\in \{C_4,C^+_3, K_{1,3}, P_4, K_3\cup K_1, P_3\cup K_1,2K_2,K_2\cup 2K_1,4K_1\}$. 
		
		Observe that if $G[A]\cong K_3\cup K_1, K_2\cup 2K_1,4K_1$, then, $e(G)\ge 13$ holds by the assumption that $G$ is $2$-connected. In the remaining case $G[A]\in \{C_4,C^+_3,P_4, K_{1,3}, P_3\cup K_1,2K_2\}$, assume by contradiction that $e(G)\le 12$. then, by our assumption and the connectivity of $G$, we obtain that $11\le e(G)\le12$ and there is a vertex $y_i$ in $A$ belonging to $D^2_2$. Clearly, $vy_i\notin E(G)$ and $N_G(y_i)\ne N_G(v)$.  But, this yields a contradiction to Lemma \ref{lem:D2_structure}. 
		
		In the case $n=10$, one can follow the same argument  in the case $n = 9$ to verify $ e(G)> f_1(n)$. But, the verification is rather tedious and is thus omitted here.
		
		Therefore, the proof is complete.
	\end{proof}

	\subsection{Kite-saturated graphs with diameter two}
	\begin{theorem}[\cite{bollobas1978}]\label{thm:diam2_bound}
		Let $G$ be a $2$-connected graph of order $n$. If $\text{diam}(G) = 2$, then $e(G) \ge 2n - 5$.
	\end{theorem}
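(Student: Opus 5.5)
Since this result is only quoted from \cite{bollobas1978}, the paper itself presumably just cites it. If I had to prove it, I would argue from a vertex of minimum degree, as follows.

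\textbf{Reduction.} Let $u$ be a vertex of minimum degree $k=\delta(G)$. Since $G$ is $2$-connected, $k\ge 2$. If $k\ge 4$, then $e(G)\ge 2n>2n-5$ and we are done. So assume $k\in\{2,3\}$.

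\textbf{The hub set.} Put $S=N_G(u)$, so $|S|=k$, and $T=V(G)\setminus(S\cup\{u\})$, so $|T|=n-1-k$. Because $\diam(G)=2$, every $t\in T$ has at least one neighbour in $S$. Split $T$ into $T_1$, the vertices with exactly one neighbour in $S$, and $T_{\ge 2}$, the rest. Since every vertex has degree at least $k\ge 2$, each $t\in T_1$ needs at least $k-1$ neighbours outside $S$, and these lie in $T$. Counting edges at $u$, between $S$ and $T$, and inside $T$ gives
\[
e(G)\ \ge\ k+e(S,T)+e(G[T])\ \ge\ k+2|T|-|T_1|+\tfrac{k-1}{2}|T_1| .
\]
For $k=3$ the last two terms already give $e(G)\ge 3+2|T|=2n-5$. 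So the real work is the case $k=2$, where the naive bound loses $|T_1|/2$.

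\textbf{The case $k=2$ (main obstacle).} Here I would exploit the diameter condition among vertices of $T_1$. Write $S=\{a,b\}$. If $t\in T_1$ hangs on $a$, then every vertex at distance $2$ from $t$ must be reached through $a$ or through a $T$-neighbour of $t$. Consequently, two nonadjacent vertices $t,s\in T_1$ hanging on different hubs need a common neighbour inside $T$. The plan has three steps.
\begin{enumerate}
\item Bound $e(G[T])$ from below in terms of the sizes $p=|T_1\cap N_G(a)|$ and $q=|T_1\cap N_G(b)|$. Every vertex of one class must reach every vertex of the other within two steps using only $T$-edges. This should force $e(G[T])\ge p+q-O(1)$ unless $\min(p,q)\le 1$.
\item In the degenerate case $\min(p,q)\le 1$, use $2$-connectivity. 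Removing the hub carrying almost all of $T_1$ must not disconnect the graph, so those vertices need further edges inside $T$ or to the other hub. This should again recover about one extra edge per $T_1$-vertex.
\item Combine these estimates with the displayed inequality to reach $e(G)\ge 2n-5$. Small orders (for instance $C_5$, where equality holds) are checked by hand as base cases.
\end{enumerate}

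I expect this bookkeeping in the case $k=2$ to be the hardest step: the losses of $1/2$ per vertex of $T_1$ must be absorbed with an additive error of at most $5$.
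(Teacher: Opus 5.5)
\textbf{Verdict: genuine gap.} Your reduction and the cases $\delta(G)\ge 3$ are correct. The case $k=2$, however, is the only nontrivial part, and you leave it as a plan that misjudges how much room there is.

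With $S=\{a,b\}$ you have $|T|=n-3$ and, exactly, $e(S,T)=|T_1|+2|T_{\ge 2}|=2|T|-|T_1|$. Hence
\[
e(G)=2n-4-|T_1|+e(G[S])+e(G[T]).
\]
To reach $2n-5$ you need precisely $e(G[T])\ge |T_1|-1$, or $|T_1|-2$ if $ab\in E(G)$. The slack is one edge, not five. So an estimate of the form $e(G[T])\ge p+q-O(1)$ with an unspecified constant does not finish the proof; for example, $p+q-2$ only gives $2n-6$. In your Step 2, the alternative of edges ``to the other hub'' is vacuous: by definition, a vertex of $T_1$ has no neighbour in $S$ other than its own hub.

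The missing idea is to argue with components of $G[T]$ rather than by counting. Put $P=T_1\cap N_G(a)$ and $Q=T_1\cap N_G(b)$.
\begin{itemize}
\item If $P,Q\neq\emptyset$: your observation that non-adjacent $t\in P$, $s\in Q$ have a common neighbour in $T$ puts every vertex of $P$ in the same component of $G[T]$ as every vertex of $Q$. So all of $T_1$ lies in one component of $G[T]$, which therefore has at least $|T_1|-1$ edges. This is exactly what is needed, with no exception for $\min(p,q)=1$.
\item If, say, $Q=\emptyset\neq P$: $G-a$ is connected, and a vertex $t\in P$ can leave $T$ in $G-a$ only through $b$, since vertices of $T$ are not adjacent to $u$. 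So each component $C$ of $G[T]$ meeting $P$ contains a vertex adjacent to $b$, which is not in $P$. Then $C$ has at least $|V(C)\cap P|$ edges, and $e(G[T])\ge |T_1|$.
\item If $T_1=\emptyset$, you already have $2n-4$.
\end{itemize}
These observations close the case $k=2$ for every $n$, so no hand-checked base cases are needed. The paper itself gives no proof of this statement and simply cites Bollob\'as.
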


	\begin{lemma}\label{lem:1conn_exact}
		Let $G$ be a connected kite-saturated graph of order $n\ge5$
		with a cut vertex. If $\operatorname{diam}(G)=2$, then
		$e(G)\ge f_1(n)$, with equality if and only if either
		$G\cong F_n^{\mathrm{odd}}$ for odd $n$ or
		$G\cong F_n^{\mathrm{even}}$ for even $n$.
	\end{lemma}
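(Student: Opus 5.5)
Let $w$ be a cut vertex of $G$. Since $\operatorname{diam}(G)=2$, any two vertices lying in different components of $G-w$ are at distance exactly two, and every such path passes through $w$. Hence $w$ is adjacent to every other vertex, so $d_G(w)=n-1$ and $w$ is the unique cut vertex. Write $G-w=B_1\cup\cdots\cup B_t$ with $t\ge 2$ components. Then
$$e(G)=(n-1)+\sum_{i=1}^t e(B_i),$$
and the whole problem reduces to understanding which graphs $B_i$ can occur and bounding $e(B_i)$ from below.

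\textbf{Step 1: structure of the components $B_i$.} Since $G$ is $K$-free and $w$ is universal, no $B_i$ may contain a $P_3$ together with a further vertex adjacent to an end of it, or a $C_4$ minus an edge. More precisely, a path $abc$ in $B_i$ together with $w$ gives a $K_4-e$ (on $w,a,b,c$ with $b,w$ of degree three). Any additional vertex adjacent to $w$ then acts as a pendant attached at $w$, and such a vertex always exists, namely one in another component. So every $B_i$ is $P_3$-free, that is, $B_i\in\{K_1,K_2\}$, unless $B_i$ is small enough to escape this.

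Let me redo this carefully. If $B_i$ contains a path $abc$, then $G[\{w,a,b,c\}]\supseteq K_4-e$ with $w$ of degree two or three inside it. A vertex $z\in B_j$ with $j\neq i$ is adjacent to $w$ and gives a kite only if $w$ is a degree-two vertex of the $K_4-e$, i.e.\ $ac\in E$ is needed. So a triangle $abc$ in $B_i$ yields $K_4$ on $\{w,a,b,c\}$, which together with $z$ contains $K$. Hence $B_i$ is triangle-free. If $B_i$ is an induced $P_3$ $abc$, then $w,b$ are the degree-three vertices and $a,c$ have degree two, so a pendant must hang on $a$ or $c$; any further neighbor of $a$ or $c$ in $B_i$ creates a kite. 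Hence $B_i$ is a star $K_{1,s}$ or a $K_2$ or a $K_1$, and a star $K_{1,s}$ with $s\ge3$ also fails, since leaves $a,c$ and a third leaf $d$ adjacent to the center $b$ form $K_4-e$ on $w,b,a,c$ with $d$ attached to $b$.

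\textbf{Step 2: saturation constraints.} Each $B_i\in\{K_1,K_2,P_3\}$. There is at most one $K_1$ component: two isolated $B_i=\{x\}$, $B_j=\{y\}$ give $G+xy$ whose only new structure is a triangle $wxy$ with $d(x)=d(y)=2$, and I will check that no kite arises. There is no $K_1$ together with a $P_3$, checked similarly by adding the edge from the isolated vertex to the center of the $P_3$. At most one $P_3$ occurs: adding an edge between the centers of two $P_3$'s is checked case by case. I expect each of these small verifications to be routine, with the main care needed for the $P_3$–$P_3$ and $K_2$–$P_3$ additions and for adding the chord $ac$ inside a $P_3$, which creates $K_4$ plus pendants and is fine.

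\textbf{Step 3: counting.} With $e(K_1)=0$, $e(K_2)=1$ and $e(P_3)=2$, we minimize $(n-1)+\sum_i e(B_i)$ subject to $\sum_i |B_i|=n-1$ and the restrictions of Step 2. Each $K_2$ costs one edge per two vertices and each $P_3$ costs two edges per three vertices, which is cheaper per vertex but limited to one. For $n$ odd, $n-1$ is even; all $K_2$ gives $(n-1)+\frac{n-1}{2}=f_1(n)$, while replacing some $K_2$'s by a $K_1$ is forbidden alongside other configurations, and I must show the remaining options are never better. For $n$ even, $n-1$ is odd, so we need exactly one $P_3$ or one $K_1$. The option $K_1\cup\frac{n-2}{2}K_2$ gives $\frac{3n}{2}-2$, which beats $f_1(n)$, so the key obstacle is to prove that a $K_1$ component together with $K_2$ components is not kite-saturated. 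Adding an edge from the isolated vertex $x$ to a vertex $a$ of a $K_2=ab$ yields $K_4-e$ on $\{w,a,b,x\}$? Only the edges $wx,wa,wb,ab,ax$ are present, which is $K_4-e$ with $w,a$ of degree three, and a pendant from $w$ then yields a kite. So this option seems to be saturated, which would contradict $f_1$. Hence I expect the real difficulty to be in the claim of Step 2 excluding a single $K_1$, and I would re-examine it. Probably two $K_1$'s are excluded, while one $K_1$ is allowed only when $n$ is odd, via $\Sat$ including $K_1\cup\frac{n-1}{4}K_4$. Note that for even $n$ the graph $K_1\vee(K_1\cup\frac{n-2}{2}K_2)$ has a leaf, and a leaf $x$ with $N(x)=\{w\}$ together with a non-neighbor $a$ gives $G+xa$, where the kite $\{w,a,b,x,z\}$ needs $z$ pendant at $w$, which is fine. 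So I would carefully recheck the $\delta$ arguments. Finally, the equality cases $F_n^{\rm odd}$ and $F_n^{\rm even}$ are read off from the unique optimal component multisets.
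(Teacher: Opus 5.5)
Your proposal does not prove the lemma. The root cause is that you use the kite inconsistently. In $K$ the pendant edge must hang on a \emph{degree-two} vertex of the $K_4-e$ (the $\alpha_1$ vertex). You apply this correctly to the induced $P_3$ in Step 1, but elsewhere you count a pendant at a degree-three vertex as a kite, and this breaks three steps.

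\begin{enumerate}
\item \textbf{Stars.} $K_{1,s}$ with $s\ge3$ is not excluded. In $\{w,b,a,c\}$ the centre $b$ has degree three. Every diamond inside $\{w\}\cup V(K_{1,s})$ has $w,b$ as its degree-three vertices and leaves as its degree-two vertices, and the leaves have no further neighbours. Such components are $K$-free and compatible with saturation.
\item \textbf{Two $P_3$'s.} ``At most one $P_3$'' does not follow from saturation. Adding $bb'$ between the centres of $abc$ and $a'b'c'$ produces the diamond $\{w,b,b',a\}$, whose degree-two vertex $b'$ carries the pendant $b'a'$. Indeed $K_1\vee 2P_3$ is kite-saturated, with $10>f_1(7)$ edges.
\item \textbf{$K_1$ components.} This is the decisive case, and you never rule it out. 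The same error (a ``pendant at $w$'') leads you to suspect that $K_1\vee(K_1\cup\frac{n-2}{2}K_2)$ is saturated. That would give $\frac{3n}{2}-2<f_1(n)$ edges and contradict the statement itself, and the proposal stops there with this case unresolved.
\end{enumerate}

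Here is what is missing. Let $x$ be an isolated vertex of $G-w$, and let $a$ be the centre of another component $B$ (either end if $B\cong K_2$). In $G+xa$, any diamond through $xa$ is $\{w,a,x,a'\}$ with $a'\in N_B(a)$. Its degree-two vertices $x,a'$ have no neighbours outside $\{w,a\}$. Also, $xa$ cannot serve as the pendant edge, because $d_G(x)=1$ and $a$ is never a degree-two vertex of a diamond of $G$. Hence $G+xa$ is $K$-free. Since $G-w$ has at least two components, no $K_1$ component can exist; this is exactly what the paper proves.

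Stars and extra $P_3$'s must then be excluded by counting, not by saturation. Write $G-w\cong p_1K_2\cup p_2P_3\cup\bigcup_j K_{1,s_j}$ with $s_j\ge3$. Then
\[e(G)=\tfrac{3(n-1)}{2}+\tfrac{p_2}{2}+\sum_j\tfrac{s_j-1}{2}.\]
The parity of $n-1=2p_1+3p_2+\sum_j(s_j+1)$ then shows that the minimum $f_1(n)$ is attained exactly when there are no stars and either $p_2=0$ ($n$ odd) or $p_2=1$ ($n$ even).

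The paper reaches the same conclusion differently. It takes an edge-minimal $G$ and uses replacements that preserve saturation: $K_{1,s}$ by $K_2$'s plus possibly one $P_3$, and $2P_3$ by $3K_2$. The direct count above is a slightly cleaner substitute, but either way you need to add these pieces.
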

	
	\begin{proof}
		Among all connected kite-saturated graphs of order $n$ with
		$\operatorname{diam}(G)=2$ and a cut vertex, choose $G$ to have
		the minimum number of edges. Let $v$ be a cut vertex of $G$.
		
			Since $v$ is a cut vertex, any two vertices in distinct components
		of $G-v$ have distance $2$. As $\operatorname{diam}(G)=2$, every
		vertex of $G-v$ is adjacent to $v$. Hence
$		N_G(v)=V(G)\setminus\{v\}$,
		and $v$ is a universal vertex. Denote by $G_1,G_2,\ldots,G_r$ the components of $G-v$.
		We next determine the possible structures of these components.
		
		First, no component $G_i$ contains a path of length at least $3$.
		Otherwise, suppose that
		$	x_1x_2x_3x_4
		$ is a path of length $3$ in $G_i$. Since $v$ is adjacent to all
		vertices of $G_i$, the subgraph induced by $
		\{v,x_1,x_2,x_3,x_4\}
		$
		contains a copy of the kite $K$, where $
		C_4=vx_2x_3x_4v$ and $vx_3$ is its chord. 
		This contradicts the fact that $G$ is kite-saturated.
		
		Similarly, no component $G_i$ contains a $K_3$. Indeed, suppose
		that $x_1x_2x_3x_1$ is a triangle in $G_i$. Since $G-v$ has at
		least two components, there exists a vertex $y$ in another
		component of $G-v$. Since $v$ is universal,
		$
		G[\{v,x_1,x_2,x_3,y\}
		]$
		contain a copy of $K$. This again
		contradicts the kite-saturation of $G$.
		
		Consequently, every component $G_i$ is connected, triangle-free,
		and $P_4$-free. Hence each $G_i$ is a star, and therefore
	$G_i\in\{K_1,K_2,P_3,K_{1,s}\}
		$ for some $s\ge3$.
		
		We next show that $G-v$ contains no component isomorphic to $K_1$.
		Clearly, there can be at most one such component. Suppose that
		$G_1\cong K_1$, say $V(G_1)=\{v_1\}$.	Let $G_2$ be another component of $G-v$.
		If $G_2\cong K_2$, say $G_2=v_2v_3$, then $v_1v_2\notin E(G)$.
		Since $G$ is kite-saturated, adding the edge $v_1v_2$ must create
		a copy of $K$. However, $G+v_1v_2$ contains no copy of $K$,
		a contradiction.	If $G_2\cong P_3$, let $v^*$ be the vertex of degree $2$ in
		$P_3$. Then $v_1v^*\notin E(G)$, but $G+v_1v^*$ contains no
		copy of $K$, again contradicting the saturation of $G$.
		Finally, suppose that $G_2\cong K_{1,s}$ for some $s\ge3$,
		and let $v^*$ be its center. Then $v_1v^*\notin E(G)$, whereas
		$G+v_1v^*$ contains no copy of $K$, which is again a contradiction.
		Therefore, $G-v$ contains no component isomorphic to $K_1$.
		
		Let $p_1,p_2,p_3$ denote the numbers of components of $G-v$
		isomorphic to $K_2$, $P_3$, and $K_{1,s}$, respectively.
		To allow different star components to have different numbers of
		leaves, write them as $
		K_{1,s_1},K_{1,s_2},\ldots,K_{1,s_{p_3}}$ with $ s_j\ge3.
		$
		Thus
		$
		G-v\cong
		p_1K_2\cup p_2P_3\cup
		\bigcup_{j=1}^{p_3}K_{1,s_j}.
		$
		Consequently, $
		n
		=
		1+2p_1+3p_2+
		\sum_{j=1}^{p_3}(s_j+1),
		$
		and, since $v$ is universal,
	$
		e(G)
		=
		n-1+p_1+2p_2+
		\sum_{j=1}^{p_3}s_j.
$ We now show that no star component can occur.
\setcounter{claim}{0} 
		\vspace{-2mm}
		\begin{claim}\label{claim:p3zero}
			$p_3=0$.
		\end{claim}
		\vspace{-2mm}
		\proof
		Suppose, to the contrary, that $p_3\ge1$. Choose one star
		component $K_{1,s}$ of $G-v$.
		If $s$ is odd, replace this component by $
		\frac{s+1}{2}K_2.
		$
		However, the number of edges decreases from $s$ to $
		\frac{s+1}{2}<s$.
		If $s$ is even, then $s\ge4$. Replace $K_{1,s}$ by $
		\frac{s-2}{2}K_2\cup P_3.
		$
		The number of edges decreases from $s$ to $
		\frac{s-2}{2}+2
		=
		\frac{s+2}{2}
		<s$.

		Let $G'$ be the graph obtained from $G$ by this replacement.
		Then $|V(G')|=|V(G)|$ and $e(G')<e(G)$.
		Moreover, $v$ remains a universal vertex of $G'$, and every
		component of $G'-v$ is isomorphic to $K_2$ or $P_3$.
			We claim that $G'$ is kite-saturated. Let $xy$ be any non-edge
		of $G'$. If $x$ and $y$ belong to distinct components of $G'-v$, then
		each of these components contains an edge. Choose a neighbor
		$x'$ of $x$ in the component containing $x$, and a neighbor
		$y'$ of $y$ in the component containing $y$. After adding $xy$,
	the subgraph induced by	the vertices $
		\{v,x,y,x',y'\}
		$
		contain a copy of $K$.
		If $x$ and $y$ belong to the same component, then this component
		must be a $P_3$ or $K_{1,s_j}$ with $s_j\ge 3$, and then $x$ and $y$ lie on a path, say $xzy$. Choose a vertex $w$ in another component. After adding $xy$, 
		the subgraph indueced by 
		$\{v,x,y,z,w\}$ contains a copy of $K$.
		Hence, $G'$ is kite-saturated. This contradicts the choice of $G$. Hence $p_3=0$.
		\qed
		
		Hence, from Claim \ref{claim:p3zero}, we have $
		G-v\cong p_1K_2\cup p_2P_3$.
		We next show that at most one component of $G-v$ is isomorphic
		to $P_3$.
		
		\vspace{-2mm}
		\begin{claim}\label{claim:p2one}
			$p_2\le1$.
		\end{claim}
		\vspace{-2mm}
		\proof
		Suppose, to the contrary, that $p_2\ge2$. Choose two components
		isomorphic to $P_3$ and replace them by three components
		isomorphic to $K_2$. Let $G'$ be the resulting graph.
		Since $
		|V(P_3\cup P_3)|=6=|V(3K_2)|$
.		The number of edges, however, decreases from $4$ to $3$.
		Hence
		$
		e(G')=e(G)-1<e(G).
		$ Moreover, $v$ remains universal in $G'$, and every component of
		$G'-v$ is isomorphic to $K_2$ or $P_3$. Exactly as in the proof
		of Claim~\ref{claim:p3zero}, every non-edge of $G'$ creates a
		copy of $K$ when added. Thus $G'$ is kite-saturated.
		
		This contradicts the minimality of $G$. Therefore, $
		p_2\le1.
		$
		\qed
		
		By Claims~\ref{claim:p3zero} and~\ref{claim:p2one}, $
		G-v\cong p_1K_2\cup p_2P_3$ with 
		$p_2\le1$.
		Hence $
		n-1=2p_1+3p_2$.
		If $n$ is odd, then $n-1$ is even. Thus $p_2$ is even.
		Since $p_2\le1$, we obtain $
		p_2=0.$
		It follows that $
		p_1=\frac{n-1}{2}$,
		and consequently $
		G\cong
		K_1\vee\frac{n-1}{2}K_2
		=:F_n^{\mathrm{odd}}.
		$
		Therefore, $
		e(G)
		=
		n-1+\frac{n-1}{2}
		=
		\frac{3n-3}{2}
		=
		f_1(n).
	$
		
		If $n$ is even, then $n-1$ is odd. Hence $p_2$ is odd.
		Since $p_2\le1$, we obtain $
		p_2=1$.
		Thus $
		p_1=\frac{n-4}{2}$,
		and hence $
		G\cong
		K_1\vee
		\left(
		\frac{n-4}{2}K_2\cup P_3
		\right)
		=:F_n^{\mathrm{even}}$. 
		In this case, $
		e(G)
		=
		n-1+\frac{n-4}{2}+2
		=
		\frac{3n}{2}-1
		=
		f_1(n)$.
		
		Thus the minimum possible number of edges is $f_1(n)$.
		Consequently, every connected kite-saturated graph of order $n$
		with a cut vertex and diameter $2$ satisfies
		$
		e(G)\ge f_1(n),
		$
		with equality if and only if
	$
		G\cong F_n^{\mathrm{odd}} \ \text{for odd }n,
		$
		or
		$
		G\cong F_n^{\mathrm{even}}\ \text{for even }n$.
		This completes the proof.

	\end{proof}

	\begin{lemma}\label{lem:diam2_n9}
		Suppose that $G$ is a $2$-connected kite-saturated graph  with  $n \ge 9$ and $\text{diam}(G) = 2$. Then, $e(G) > f_1(n)$.
	\end{lemma}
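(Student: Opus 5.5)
Since $G$ is $2$-connected with $\operatorname{diam}(G)=2$, Theorem~\ref{thm:diam2_bound} gives $e(G)\ge 2n-5$. The plan is to compare this with $f_1(n)$. We have $f_1(n)\le \frac{3n}{2}-1$ for every $n$, so it suffices to check
\[
2n-5>\frac{3n}{2}-1,
\]
which is equivalent to $n>8$. This holds for all $n\ge 9$, and therefore $e(G)\ge 2n-5>f_1(n)$.

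For completeness I would record both parities. For odd $n\ge 9$ we need $2n-5>\frac{3n-3}{2}$, i.e. $n>7$. For even $n\ge 10$ we need $2n-5>\frac{3n}{2}-1$, i.e. $n>8$. Both hold in the stated range.

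There is essentially no obstacle, because the cited bound of Bollob\'as does all the work. The only point needing care is the threshold $n\ge 9$: at $n=8$ we get $2n-5=11=f_1(8)$, so the strict inequality fails there. This explains why the small orders are handled separately in Lemmas~\ref{lem:n6_exact} and~\ref{lem:n78_exact}.
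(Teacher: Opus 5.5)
Your proof is correct and is essentially the paper's proof: both apply Theorem~\ref{thm:diam2_bound} to get $e(G)\ge 2n-5$ and observe that $2n-5>f_1(n)$ for $n\ge 9$. Your parity check and the remark that $n=8$ gives equality ($2n-5=11=f_1(8)$) are accurate and make explicit a threshold the paper leaves implicit.
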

	
	\begin{proof}
		By Theorem~\ref{thm:diam2_bound},  we get $e(G) \ge 2n - 5$. Observe that $2n - 5 > f_1(n)$ holds for all $n \ge 9$. Thus, the result follows immediately.
	\end{proof}

	\subsection{Kite-saturated graphs with diameter three or four}

	
	\begin{lemma}\label{lem:diam34_2connected}
		Let $G$ be a kite-saturated graph of order $n$. If
		$3\le \operatorname{diam}(G)\le4$, then $G$ is $2$-connected.
	\end{lemma}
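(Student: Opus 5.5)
We argue by contradiction. Assume $G$ is kite-saturated with $3\le \diam(G)\le 4$ but not $2$-connected. Since $\diam(G)$ is finite, $G$ is connected, so $G$ has a cut vertex $w$. Let $A$ be the vertex set of one component of $G-w$ and $B=V(G)\setminus(A\cup\{w\})$. Every path between $A$ and $B$ passes through $w$, and every kite in $G+xy$ can be traced through $w$.

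The first step is to show that every vertex of $G-w$ is adjacent to $w$. Suppose some $x\in A$ (say) has $d_G(x,w)\ge 2$. Pick $y\in N_G(w)\cap B$, so $d_G(x,y)\ge 3$. By Observation~\ref{obs:dist_type}, $xy$ is the type I edge of $K^{xy}$. The remaining four vertices of $K^{xy}$ form a $2$-connected subgraph (a $K_4^-$ plus one endpoint of the pendant edge). Since $xy$ is the only new edge, the $K_4^-$ part must lie entirely in $A\cup\{w\}$ or entirely in $B\cup\{w\}$, together with whichever of $x,y$ is the type $\alpha_1$ vertex.

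If $y$ is the type $\alpha_1$ vertex, its three neighbours in the $K_4^-$ lie in $B\cup\{w\}$, so $d_G(y)\ge 3$. Lemma~\ref{lem:keylemma} then forbids $y$ being type $\alpha_1$. If instead $x$ is the type $\alpha_1$ vertex, the $K_4^-$ lies in $A\cup\{w\}$ and $d_G(x)\ge 3$, which is again excluded by Lemma~\ref{lem:keylemma}. This uses that $x,y$ lie in one component, which holds because $G$ is connected. Hence $w$ is universal, every two vertices are within distance $2$, and $\diam(G)\le 2$, a contradiction.

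The main obstacle is the degree-$2$ case, which Lemma~\ref{lem:keylemma} does not cover. If the type $\alpha_1$ endpoint has degree exactly $2$, its two $K_4^-$-neighbours are its whole neighbourhood and are adjacent to each other. Suppose $x$ is this endpoint and $N_G(x)=\{a,b\}$ with $ab\in E(G)$. The fourth vertex $c$ of the $K_4^-$ is adjacent to both $a$ and $b$. I would then run the distance argument again with a vertex farther from $x$: choose $y'\in B$ with $d_G(x,y')\ge 3$, which exists by the diameter assumption unless all of $B$ is close to $x$. The aim is to find a new kite in $G$ itself, namely $G[\{x,a,b,c\}]$ plus a pendant attached at $a$ or $b$ through $w$ or another vertex. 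This contradicts $K$-freeness.

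Concretely, the kite $K^{xy}$ with $x$ of type $\alpha_1$ needs $a,b,c$ together with $x$. The edge $xy$ is the pendant, so $\{a,b\}\ni w$ or $c=w$ by the separation. If $w\in\{a,b\}$, say $w=a$, then $w$ has a neighbour in $B$, and $G[\{x,w,b,c\}]$ plus that $B$-neighbour of $w$ is already a kite in $G$. If $c=w$, the same happens with $w$ as a degree-$2$ vertex of $K_4^-$ carrying the pendant into $B$. Either way $G$ contains $K$, a contradiction. I expect this case analysis on where $w$ sits inside $K^{xy}$ to carry the proof.
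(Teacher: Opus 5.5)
Your proposal has a genuine gap: the degree-$2$ case, which you flag as the main obstacle, is not actually closed, and the step before it is misstated.

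\textbf{The degree count in your third paragraph is wrong.} A type $\alpha_1$ vertex has only two neighbours in the $K_4^-$, namely the two type $\alpha_2$ vertices. Its third kite-neighbour is the other end of the added edge $xy$, which is not an edge of $G$. So the diamond only gives degree at least $2$. Lemma~\ref{lem:keylemma} therefore handles only the case where the type $\alpha_1$ endpoint already has degree at least $3$, and ``hence $w$ is universal'' does not follow.

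\textbf{Your treatment of the degree-$2$ case breaks in two places.}
First, the separation does not force $w\in\{a,b,c\}$. The added edge $xy$ is itself the edge crossing the cut, so the diamond $\{x,a,b,c\}$ can lie entirely inside $A$. (In fact $w\notin\{a,b\}$ anyway, since $d_G(x,w)\ge 2$.) In that configuration $G+xz$ contains a kite for every $z\notin\{a,b,c\}$. So no choice of $y$, or of a ``farther'' $y'$, can give a contradiction from a kite at $x$.
Second, when $w\in\{a,b\}$, $w$ is a type $\alpha_2$ vertex of degree $3$ in the diamond. This is exactly what happens when $y\in N_G(w)$ is the degree-$2$ type $\alpha_1$ endpoint. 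A pendant at $w$ then gives the dart, not the kite. Only your subcase $c=w$ is correct.

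\textbf{The missing idea} is to leave the degree-$2$ vertex and work with its type $\alpha_2$ neighbours, which have degree at least $3$. In a connected kite-saturated graph, two vertices of degree at least $3$ cannot be at distance at least $3$. Observation~\ref{obs:dist_type} would force the added edge to be type I, while Lemma~\ref{lem:keylemma} forbids either endpoint from being type $\alpha_1$. The paper does this in its layered distance cases:
\begin{enumerate}
\item from $x_{11}$ of type $\alpha_1$ it extracts $x_{12}$ of degree at least $3$;
\item from $G+x_{12}y_{21}$ it extracts $y_{22}$ of degree at least $3$;
\item the non-edge $x_{12}y_{22}$ is the contradiction.
\end{enumerate}

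\textbf{In your setup the repair goes as follows.} Let $x$ be a degree-$2$ type $\alpha_1$ vertex with diamond $\{x,a,b,c\}$ and $a,b\in A$.
If $a,b\in N_G(w)$, then $\{x,a,b,w\}$ with a pendant at $w$ into $B$ is a kite in $G$.
Otherwise, say $d_G(a,w)\ge 2$. For any $y\in B$ the edge $ay$ forces $y$ to be a degree-$2$ type $\alpha_1$ vertex. Its diamond contains a type $\alpha_2$ vertex $p\in B$ with $d_G(p)\ge 3$ and $d_G(a,p)\ge 3$, a contradiction.
If instead $y$ is the type $\alpha_1$ endpoint, replace $y$ by its neighbour $b'\ne w$. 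This vertex has degree at least $3$ and is at distance at least $3$ from $x$. Then $x$ must be the degree-$2$ type $\alpha_1$ vertex, and you are in the previous case.
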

	
	\begin{proof}
		Suppose, to the contrary, that $G$ has a cut vertex $v$. Let
		$H_1,H_2,\ldots,H_k$ be the components of $G-v$. For each $i$,
		let
		$d_G(v,H_i)=\max\{d_G(v,x):x\in V(H_i)\}$.
		Since $3\le\operatorname{diam}(G)\le4$, there is a component,
		say $H_1$, such that $2\le d_G(v,H_1)\le3$.
			Let $B=V(H_1)$ and
		$A=\bigcup_{i=2}^kV(H_i)$. Define
		$A_i=\{x\in A:d_G(v,x)=i\}$ and
		$B_j=\{y\in B:d_G(v,y)=j\}$. Since there are no edges between
		$A$ and $B$, we have $d_G(x,y)=i+j$ for every
		$x\in A_i$ and $y\in B_j$. Moreover, since
		$3\le\operatorname{diam}(G)\le4$, the possible distance
		partitions are
		$(A_1,\{v\},B_1,B_2)$,
		$(A_1,\{v\},B_1,B_2,B_3)$, and
		$(A_2,A_1,\{v\},B_1,B_2)$.
		For convenience, write
		$A_i=\{x_{i1},x_{i2},\ldots,x_{i|A_i|}\}$ and
		$B_j=\{y_{j1},y_{j2},\ldots,y_{j|B_j|}\}$. Hence, the following three cases are necessary.
		
		\medskip
		\noindent
		\textbf{Case 1.}
		$V(G)$ has distance partition
		$(A_1,\{v\},B_1,B_2)$.
		
		Consider $G+x_{11}y_{21}$. Since
		$d_G(x_{11},y_{21})=3$, Observation~\ref{obs:dist_type}
		implies that $x_{11}y_{21}$ is the type I edge of
		$K^{x_{11}y_{21}}$. Without loss of generality, let $x_{11}$
		be a type $\alpha_1$ vertex. By the structure of a type I kite,
		there is a type $\alpha_2$ vertex $x_{12}\in A_1$ in
		$K^{x_{11}y_{21}}$. Hence $d_G(x_{12})\ge3$.
		
		Now consider $G+x_{12}y_{21}$. Since
		$d_G(x_{12},y_{21})\ge3$, Observation~\ref{obs:dist_type}
		implies that $x_{12}y_{21}$ is type I. As
		$d_G(x_{12})\ge3$, Lemma~\ref{lem:keylemma} implies that
		$y_{21}$ is the type $\alpha_1$ vertex of
		$K^{x_{12}y_{21}}$.
		If
		$N_G(y_{21})\cap B_2\cap V(K^{x_{12}y_{21}})=\emptyset$,
		then Figure~\ref{fig:layer_structures_mini_grid}(1), (2), and
		(4) yield a copy of $K$ in $G$, a contradiction. Hence there
		is a vertex $y_{22}\in B_2$ such that
		$y_{21}y_{22}\in E(K^{x_{12}y_{21}})$.
		If $y_{22}$ is a type $\alpha_3$ vertex, then
		Figure~\ref{fig:layer_structures_mini_grid}(3) yields a copy
		of $K$ in $G$, a contradiction. Hence $y_{22}$ is a type
		$\alpha_2$ vertex, and so $d_G(y_{22})\ge3$. Thus the only
		remaining local configuration is Figure~\ref{fig:layer_structures_mini_grid}(5).
		
		Consider $G+x_{12}y_{22}$. Since
		$d_G(x_{12},y_{22})=3$, Observation~\ref{obs:dist_type}
		implies that $x_{12}y_{22}$ is type I. Since
		$d_G(x_{12})\ge3$ and $d_G(y_{22})\ge3$,
		Lemma~\ref{lem:keylemma} implies that neither $x_{12}$ nor
		$y_{22}$ is a type $\alpha_1$ vertex, contradicting the
		definition of a type I edge. Hence Case 1 is impossible.
		
		\medskip
		\noindent
		\textbf{Case 2.}
		$V(G)$ has distance partition
		$(A_1,\{v\},B_1,B_2,B_3)$.
		
		Consider $G+x_{11}y_{31}$. Since
		$d_G(x_{11},y_{31})=4$, Observation~\ref{obs:dist_type}
		implies that $x_{11}y_{31}$ is type I. The configurations in
		Figure~\ref{fig:extended_layer_grid} yield a type $\alpha_2$
		vertex $x_{12}\in A_1$ with $d_G(x_{12})\ge3$, and a vertex
		$y_{2j}\in B_j$, where $j\in\{2,3\}$, with
		$d_G(y_{2j})\ge3$. Since
		$d_G(x_{12},y_{2j})=j+1\ge3$, Observation~\ref{obs:dist_type}
		implies that $x_{12}y_{2j}$ is type I. By
		Lemma~\ref{lem:keylemma}, neither $x_{12}$ nor $y_{2j}$ is a
		type $\alpha_1$ vertex, a contradiction. Thus Case 2 is
		impossible.
		
		\medskip
		\noindent
		\textbf{Case 3.}
		$V(G)$ has distance partition
		$(A_2,A_1,\{v\},B_1,B_2)$.
		
		Consider $G+x_{21}y_{21}$. Since
		$d_G(x_{21},y_{21})=4$, Observation~\ref{obs:dist_type}
		implies that $x_{21}y_{21}$ is type I. The configurations in
		Figure~\ref{fig:extended_layer_grid} yield vertices
		$x_{22}\in A_2$ and $y_{22}\in B_2$ such that
		$d_G(x_{22})\ge3$ and $d_G(y_{22})\ge3$. Since
		$d_G(x_{22},y_{22})=4$, Observation~\ref{obs:dist_type}
		implies that $x_{22}y_{22}$ is type I. By
		Lemma~\ref{lem:keylemma}, neither $x_{22}$ nor $y_{22}$ is a
		type $\alpha_1$ vertex, contradicting the definition of a
		type I edge. Thus Case 3 is impossible.
		
		Therefore $G$ has no cut vertex, and hence $G$ is
		$2$-connected.
	\end{proof}


		\begin{figure}[htbp]
		\centering
		\begin{tikzpicture}[line join=round, line cap=round, semithick]
			\begin{scope}[thin, black!80]
				\draw (-2.5, 2.1) -- (8.1, 2.1);
				\draw (-2.5, 1.6) -- (8.1, 1.6);
				\draw (-2.5, 0.9) -- (8.1, 0.9);
				\draw (-2.5, 0.3) -- (8.1, 0.3);
				\draw (-2.5, -0.6) -- (8.1, -0.6);
				
				\draw (-2.5, 2.1) -- (-2.5, -0.6);
				\draw (-0.9, 2.1) -- (-0.9, -0.6);
				\draw (0.9, 2.1) -- (0.9, -0.6);
				\draw (2.7, 2.1) -- (2.7, -0.6);
				\draw (4.5, 2.1) -- (4.5, -0.6);
				\draw (6.3, 2.1) -- (6.3, -0.6);
				\draw (8.1, 2.1) -- (8.1, -0.6);
			\end{scope}
			
			\node[font=\small] at (-1.7, 1.2) {$v$};
			\node[font=\small] at (-1.7, 0.6) {$B_1(A_1)$};
			\node[font=\small] at (-1.7, 0) {$B_2(A_2)$};
			
			\node[font=\small] at (0, 1.85) {(1)};
			\node[font=\small] at (1.8, 1.85) {(2)};
			\node[font=\small] at (3.6, 1.85) {(3)};
			\node[font=\small] at (5.4, 1.85) {(4)};
			\node[font=\small] at (7.2, 1.85) {(5)};
			
			\tikzset{pt/.style={circle, fill, inner sep=0pt, minimum size=3pt}}
			
			\begin{scope}[shift={(0,0)}]
				\coordinate (T) at (0, 1.2);
				\coordinate (M1) at (-0.3, 0.6);
				\coordinate (M2) at (0.3, 0.6);
				\coordinate (B) at (0, 0);
				
				\draw (M1) -- (M2);
				\draw (T) -- (M1); \draw (T) -- (M2);
				\draw (M1) -- (B); \draw (M2) -- (B);
				
				\node[pt] at (T) {}; \node[pt] at (M1) {}; \node[pt] at (M2) {}; \node[pt] at (B) {};
				\node[anchor=north, inner sep=1pt, font=\tiny] at ([yshift=-2pt]B) {$y_{21}(x_{21})$};
			\end{scope}
			
			\begin{scope}[shift={(1.8,0)}]
				\coordinate (U) at (0, 1.2);
				\coordinate (T1) at (-0.3, 0.75);
				\coordinate (T2) at (0.3, 0.75);
				\coordinate (T3) at (0, 0.45);
				\coordinate (B) at (0, 0);
				
				\draw (T1) -- (T2) -- (T3) -- cycle;
				\draw (U) -- (T1); \draw (U) -- (T2); \draw (U) -- (T3);
				\draw (T1) -- (B); \draw (T3) -- (B);
				
				\node[pt] at (U) {}; \node[pt] at (T1) {}; \node[pt] at (T2) {}; \node[pt] at (T3) {}; \node[pt] at (B) {};
				\node[anchor=north, inner sep=1pt, font=\tiny] at ([yshift=-2pt]B) {$y_{21}(x_{21})$};
			\end{scope}
			
			\begin{scope}[shift={(3.6,0)}]
				\coordinate (U) at (0, 1.2);
				\coordinate (T1) at (-0.3, 0.6);
				\coordinate (T2) at (0.3, 0.6);
				\coordinate (B1) at (-0.3, 0);
				\coordinate (B2) at (0.3, 0);
				
				\draw (T1) -- (T2); \draw (B1) -- (B2);
				\draw (U) -- (T1); \draw (U) -- (T2);
				\draw (T1) -- (B1); \draw (T2) -- (B2); \draw (T1) -- (B2);
				
				\node[pt] at (U) {}; \node[pt] at (T1) {}; \node[pt] at (T2) {}; \node[pt] at (B1) {}; \node[pt] at (B2) {};
				\node[anchor=north, inner sep=1pt, font=\tiny] at ([yshift=-2pt]B1) {$y_{21}(x_{21})$};
			\end{scope}
			
			\begin{scope}[shift={(5.4,0)}]
				\coordinate (U) at (0, 1.2);
				\coordinate (T1) at (-0.3, 0.6);
				\coordinate (T2) at (0.3, 0.6);
				\coordinate (B1) at (-0.3, 0);
				\coordinate (B2) at (0.3, 0);
				
				\draw (T1) -- (T2);
				\draw (U) -- (T1); \draw (U) -- (T2);
				\draw (T1) -- (B1); \draw (T2) -- (B2); \draw (T1) -- (B2); \draw (T2) -- (B1);
				
				\node[pt] at (U) {}; \node[pt] at (T1) {}; \node[pt] at (T2) {}; \node[pt] at (B1) {}; \node[pt] at (B2) {};
				\node[anchor=north, inner sep=1pt, font=\tiny] at ([yshift=-2pt]B1) {$y_{21}(x_{21})$};
			\end{scope}
			
			\begin{scope}[shift={(7.2,0)}]
				\coordinate (U1) at (0, 1.2);
				\coordinate (T1) at (0, 0.6);
				\coordinate (B1) at (-0.3, 0);
				\coordinate (B2) at (0, 0);
				\coordinate (B3) at (0.3, 0);
				
				\draw (B1) -- (B2) -- (B3);
				\draw (U1) -- (T1);
				\draw (T1) -- (B1); \draw (T1) -- (B2); \draw (T1) -- (B3);
				
				\node[pt] at (U1) {}; \node[pt] at (T1) {}; \node[pt] at (B1) {}; \node[pt] at (B2) {}; \node[pt] at (B3) {};
				\node[anchor=north, inner sep=1pt, font=\tiny] at ([yshift=-2pt]B1) {$y_{21}(x_{21})$};
			\end{scope}
		\end{tikzpicture}
		\caption{Local two-layer structures of $G$.}
		\label{fig:layer_structures_mini_grid}
	\end{figure}
	
	\begin{figure}[htbp]
		\centering
		\begin{tikzpicture}[line join=round, line cap=round, semithick]
			\begin{scope}[thin, black!80]
				\draw (-2.5, 2.7) -- (8.1, 2.7);
				\draw (-2.5, 2.1) -- (8.1, 2.1);
				\draw (-2.5, 1.5) -- (8.1, 1.5);
				\draw (-2.5, 0.9) -- (8.1, 0.9);
				\draw (-2.5, 0.3) -- (8.1, 0.3);
				\draw (-2.5, -0.6) -- (8.1, -0.6);
				
				\draw (-2.5, 2.7) -- (-2.5, -0.6);
				\draw (-0.9, 2.7) -- (-0.9, -0.6);
				\draw (0.9, 2.7) -- (0.9, -0.6);
				\draw (2.7, 2.7) -- (2.7, -0.6);
				\draw (4.5, 2.7) -- (4.5, -0.6);
				\draw (6.3, 2.7) -- (6.3, -0.6);
				\draw (8.1, 2.7) -- (8.1, -0.6);
			\end{scope}
			
			\node[font=\small] at (-1.7, 1.8) {$v$};
			\node[font=\small] at (-1.7, 1.2) {$B_1$};
			\node[font=\small] at (-1.7, 0.6) {$B_2$};
			\node[font=\small] at (-1.7, 0) {$B_3$};
			
			\node[font=\small] at (0, 2.4) {(1)};
			\node[font=\small] at (1.8, 2.4) {(2)};
			\node[font=\small] at (3.6, 2.4) {(3)};
			\node[font=\small] at (5.4, 2.4) {(4)};
			\node[font=\small] at (7.2, 2.4) {(5)};
			
			\tikzset{pt/.style={circle, fill, inner sep=0pt, minimum size=3pt}}
			
			\begin{scope}[shift={(0,0)}]
				\coordinate (V) at (0, 1.8);
				\coordinate (B1) at (0, 1.2);
				\coordinate (B21) at (-0.3, 0.6);
				\coordinate (B22) at (0.3, 0.6);
				\coordinate (B31) at (0, 0);
				
				\draw (B21) -- (B22);
				\draw (B21) -- (B31); \draw (B22) -- (B31);
				
				\draw (B1) -- (B21); \draw (B1) -- (B22);
				
				\draw (V) -- (B1);
				
				\node[pt] at (V) {};
				\node[pt] at (B1) {};
				\node[pt] at (B21) {}; \node[pt] at (B22) {};
				\node[pt] at (B31) {};
				\node[anchor=north, inner sep=1pt, font=\tiny] at ([yshift=-2pt]B31) {$y_{21}(x_{21})$};
			\end{scope}
			
			\begin{scope}[shift={(1.8,0)}]
				\coordinate (V) at (0, 1.8);
				\coordinate (B11) at (-0.3, 1.2);
				\coordinate (B12) at (0, 1.2);
				\coordinate (B13) at (0.3, 1.2);
				\coordinate (B21) at (-0.3, 0.75);
				\coordinate (B22) at (0, 0.45);
				\coordinate (B23) at (0.3, 0.75);
				\coordinate (B31) at (0, 0);
				
				\draw (B21) -- (B23) -- (B22) -- cycle;
				\draw (B21) -- (B31); \draw (B22) -- (B31);
				
				\draw (B11) -- (B21); \draw (B12) -- (B22); \draw (B13) -- (B23);
				
				\draw (V) -- (B11); \draw (V) -- (B12); \draw (V) -- (B13);
				
				\node[pt] at (V) {};
				\node[pt] at (B11) {}; \node[pt] at (B12) {}; \node[pt] at (B13) {};
				\node[pt] at (B21) {}; \node[pt] at (B22) {}; \node[pt] at (B23) {};
				\node[pt] at (B31) {};
				\node[anchor=north, inner sep=1pt, font=\tiny] at ([yshift=-2pt]B31) {$y_{21}(x_{21})$};
			\end{scope}
			
			\begin{scope}[shift={(3.6,0)}]
				\coordinate (V) at (0, 1.8);
				\coordinate (B11) at (-0.3, 1.2);
				\coordinate (B12) at (0.3, 1.2);
				\coordinate (B21) at (-0.3, 0.6);
				\coordinate (B22) at (0.3, 0.6);
				\coordinate (B31) at (-0.3, 0);
				\coordinate (B32) at (0.3, 0);
				
				\draw (B21) -- (B22); \draw (B31) -- (B32);
				\draw (B21) -- (B31); \draw (B22) -- (B32); \draw (B21) -- (B32);
				
				\draw (B11) -- (B21); \draw (B12) -- (B22);
				
				\draw (V) -- (B11); \draw (V) -- (B12);
				
				\node[pt] at (V) {};
				\node[pt] at (B11) {}; \node[pt] at (B12) {};
				\node[pt] at (B21) {}; \node[pt] at (B22) {};
				\node[pt] at (B31) {}; \node[pt] at (B32) {};
				\node[anchor=north, inner sep=1pt, font=\tiny] at ([yshift=-2pt]B31) {$y_{21}(x_{21})$};
			\end{scope}
			
			\begin{scope}[shift={(5.4,0)}]
				\coordinate (V) at (0, 1.8);
				\coordinate (B11) at (-0.3, 1.2);
				\coordinate (B12) at (0.3, 1.2);
				\coordinate (B21) at (-0.3, 0.6);
				\coordinate (B22) at (0.3, 0.6);
				\coordinate (B31) at (-0.3, 0);
				\coordinate (B32) at (0.3, 0);
				
				\draw (B21) -- (B22);
				\draw (B21) -- (B31); \draw (B22) -- (B32); \draw (B21) -- (B32); \draw (B22) -- (B31);
				
				\draw (B11) -- (B21); \draw (B12) -- (B22);
				
				\draw (V) -- (B11); \draw (V) -- (B12);
				
				\node[pt] at (V) {};
				\node[pt] at (B11) {}; \node[pt] at (B12) {};
				\node[pt] at (B21) {}; \node[pt] at (B22) {};
				\node[pt] at (B31) {}; \node[pt] at (B32) {};
				\node[anchor=north, inner sep=1pt, font=\tiny] at ([yshift=-2pt]B31) {$y_{21}(x_{21})$};
			\end{scope}
			
			\begin{scope}[shift={(7.2,0)}]
				\coordinate (V) at (0, 1.8);
				\coordinate (B11) at (0, 1.2);
				\coordinate (B21) at (0, 0.6);
				\coordinate (B31) at (-0.3, 0);
				\coordinate (B32) at (0, 0);
				\coordinate (B33) at (0.3, 0);
				
				\draw (B31) -- (B32) -- (B33);
				\draw (B21) -- (B31); \draw (B21) -- (B32); \draw (B21) -- (B33);
				
				\draw (B11) -- (B21);
				
				\draw (V) -- (B11);
				
				\node[pt] at (V) {};
				\node[pt] at (B11) {};
				\node[pt] at (B21) {};
				\node[pt] at (B31) {}; \node[pt] at (B32) {}; \node[pt] at (B33) {};
				\node[anchor=north, inner sep=1pt, font=\tiny] at ([yshift=-2pt]B31) {$y_{21}(x_{21})$};
			\end{scope}
		\end{tikzpicture}
		\caption{Local three-layer structures of $G$.}
		\label{fig:extended_layer_grid}
	\end{figure}

	\begin{lemma}\label{lem:diam34_bound}
		Let $G$ be a kite-saturated graph of order $n\ge 5$. If $3 \le \text{diam}(G) \le 4$, then $e(G) > f_1(n)$.
	\end{lemma}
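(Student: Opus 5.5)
By Lemma~\ref{lem:diam34_2connected}, $G$ is $2$-connected, so $\delta(G)\ge 2$. The plan is a counting argument on the degree-two vertices. Since $2e(G)=\sum_v d_G(v)\ge 3n-|D_2|$, the inequality $e(G)>f_1(n)$ follows as soon as $|D_2|$ is small. Precisely, $|D_2|\le 2$ gives $e(G)\ge (3n-2)/2>f_1(n)$ in both parities. For larger $|D_2|$ I would compensate using vertices of degree at least $4$, or edges forced among the degree-$\ge3$ vertices.

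Small orders are handled separately. For $n\in\{5,6\}$, Lemmas~\ref{lem:n5_exact} and \ref{lem:n6_exact} show that equality forces $G\in\{H_5,F_6^1,F_6^2,F_6^3\}$. Each of these has diameter $2$ (or diameter at most $2$ after a direct check), so a graph of diameter $3$ or $4$ is strictly above $f_1(n)$. For $n\in\{7,8\}$ we invoke Lemma~\ref{lem:n78_exact} directly. So assume $n\ge 9$.

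\textbf{Step 1: vertices in $D_2^1$.} Let $v\in D_2^1$, so its neighbours $a,b$ are adjacent. Since $\operatorname{diam}(G)\ge 3$, some vertex $w$ satisfies $d_G(v,w)\ge 3$. Then $vw$ is the type I edge of $K^{vw}$ by Observation~\ref{obs:dist_type}. By Lemma~\ref{lem:keylemma}, and because $v$ cannot be a type $\alpha_1$ vertex (it has degree $2$), I would argue that either $w$ has degree $2$ or the configuration forces a $K_4^-$ through $w$ whose vertices have degree at least $3$. Moreover, $a$ and $b$ cannot both have degree $3$ with a common further neighbour, since that would give a kite in $G$. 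The aim is to show that each $D_2^1$ vertex "charges" a neighbour of degree at least $4$, with bounded multiplicity. This is the standard discharging setup: each vertex starts with charge $d(v)-3$, and we need the total charge to exceed $2f_1(n)-3n\in\{-3,-2\}$.

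\textbf{Step 2: vertices in $D_2^2$.} By Lemma~\ref{lem:D2_structure}, either $D_2^2$ is an adjacent pair, or all its vertices share the same two common neighbours $u_1,u_2$. In the second case, if $|D_2^2|=t\ge 2$, then $u_1,u_2$ have degree at least $t$. Their surplus $2(t-3)$ roughly pays for the deficit of the $t$ degree-$2$ vertices, and since $n\ge 9$ the vertices $u_i$ must also have neighbours outside, which gives extra surplus. Lemma~\ref{lem:H5_existence} supplies an induced $H_5$ around each such vertex. I would use it to locate degree-$\ge3$ vertices and to show that the diametral pair lies outside, which forces additional edges.

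\textbf{Step 3: main obstacle.} The key difficulty is bounding $|D_2^1|$. Many triangles hanging on pairs $\{a,b\}$, as in the extremal graphs $F_n$, are exactly what makes the bound tight in diameter $2$. For diameter $\ge 3$, I would show that such triangles force the shared vertices to have high degree. Concretely: if $ab$ carries several degree-$2$ vertices, then any vertex $w$ at distance $\ge 3$ from them produces, via $K^{vw}$ and Lemma~\ref{lem:keylemma}, a contradiction unless $w$ is itself in a pendant-like triangle. I would then run the cut/layer argument of Lemma~\ref{lem:diam34_2connected} on the distance partition from $v$. This should bound the total deficit by $1$, giving $e(G)\ge \lceil (3n-1)/2\rceil>f_1(n)$. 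I expect this deficit accounting to need a careful case split on the distance layers $B_1,B_2,B_3$ from $v$.
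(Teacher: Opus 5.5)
\textbf{There is a genuine gap.} Your reductions agree with the paper: $2$-connectivity comes from Lemma~\ref{lem:diam34_2connected}, and $n\le 8$ is handled by Lemmas~\ref{lem:n5_exact}, \ref{lem:n6_exact} and \ref{lem:n78_exact}. For $n\ge 9$, however, the proposal contains no actual proof. Steps 1--3 describe a discharging you hope will give $\sum_v(d_G(v)-3)\ge -1$, but no step establishes it, and several ingredients are wrong.

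\begin{itemize}
\item In Step 1, $\operatorname{diam}(G)\ge 3$ does not give every $v\in D_2^1$ a vertex at distance at least $3$; nothing prevents $v$ from having eccentricity $2$.
\item Also in Step 1, a degree-$2$ vertex \emph{can} be the type $\alpha_1$ vertex of $K^{vw}$. Lemma~\ref{lem:keylemma} only excludes vertices of degree at least $3$. A vertex of $D_2^1$ whose two neighbours have another common neighbour is exactly an $\alpha_1$ candidate, which is why Corollary~\ref{cor:type_II} assumes $d_G(x)\neq 2$.
\item Step 2 does not close: $t$ units of deficit against a surplus of $2(t-3)$ leaves $t-6<0$ for small $t$, so ``roughly pays'' is not an argument.
\item Your opening count is off for even $n$: $|D_2|=2$ gives only $e(G)\ge (3n-2)/2=f_1(n)$, not a strict inequality.
\item A local scheme charging degree-$2$ vertices to a neighbour ``with bounded multiplicity'' must explain why they cannot all hang on one hub, as in $F_n^{\rm odd}$. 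Your sketch never uses the diameter hypothesis quantitatively to rule this out.
\end{itemize}

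\textbf{The missing idea is global.} Corollary~\ref{cor:type_II} says that any two nonadjacent vertices of degree at least $3$ have a common neighbour in $G$. The paper deletes $D_2$. In case (ii) of Lemma~\ref{lem:D2_structure}, where all of $D_2^2$ is attached to the same nonadjacent pair $x,y$, it also adds the edge $xy$. A common neighbour lying in $D_2^1$ forces the two vertices to be adjacent. A common neighbour lying in $D_2^2$ forces them to be $\{x,y\}$. Hence the resulting graph $G'$ on $n'=|\overline{D_2}|$ vertices has diameter $2$, and one checks it remains $2$-connected.

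Theorem~\ref{thm:diam2_bound} then gives $e(G')\ge 2n'-5$. Each deleted vertex carries two edges, up to a loss of one edge: the adjacent pair in case (i), or the added edge $xy$ in case (ii). So $e(G)\ge 2n-5$ if $D_2^2=\emptyset$, and $e(G)\ge 2n-6$ otherwise. This is far above $3n/2$ and exceeds $f_1(n)$ for $n\ge 11$. The remaining cases $n=9,10$ with $D_2^2\neq\emptyset$ are exactly Lemma~\ref{lem:n9}.

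Without this step, or some substitute that controls the degree-$\ge 3$ vertices globally, your counting has nothing that bounds the number of degree-$2$ vertices.
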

	
	\begin{proof}
		For convenience, we assume that $G$ is a minimum kite-saturated graph with  $n$ vertices and $3 \le \text{diam}(G) \le 4$. By Lemma \ref{lem:diam34_2connected},  we can assume that $G$ is $2$-connected, which infers $\delta_G\ge 2$. 
		Note that $|G|\ge 5$ by $|K|=5$. Lemma~\ref{lem:n5_exact} and Lemma~\ref{lem:n6_exact} characterizing  all kite-saturated graphs with $e(G)=f_1(n)$  have diameter two, Hence, the statement is true for $n=5,6$. Lemma~\ref{lem:n78_exact} implies that the conclusion is true for $n=7,8$.
		Thus, we now assume $n \ge 9$. Observe that $e(G)\ge f_1(n)$ if $D_2=\emptyset$. Hence, we next consider the case $D_2\ne\emptyset.$

		For short, we  partition $V(G)$ into two disjoint sets $D_2$ and $\overline{D_2}$, where $\overline{D_2}=V(G)\setminus D_2$.   Set $|\overline{D_2}| = n'$. 
		If  $D_2^2\ne\emptyset$, then  Lemma~\ref{lem:D2_structure} implies that $D_2^2$ of $G$ has two different cases.  Thus, $G$ has two possible  structures $(a)$ and $(b)$ as shown in Figure 9. Let $G'$ be the graph obtained from $G$ by removing all vertices in $D^2$ for $(a)$, or from $G$ by removing all vertices in $D_2$ and adding new edge $xy$ for $(b)$. Clearly, $G'\cong G[\overline{D_2}]$ for $(a)$ or $G'\cong G[\overline{D_2}]+xy$ for $(b)$.
		We now show the following claim.
		
		\setcounter{claim}{0} 
		\begin{claim}\label{g'22}
			$G'$ is $2$-connected  with $\text{diam}(G')=2$.
		\end{claim}
		\begin{proof}
			
			We first show that $G'$ is $2$-connected. Assume to the contrary that $G'$ has a cut vertex $v$ such that $G'-v$ contains at least two components. In fact, we claim that $v$ is not a neighbor of some vertex in  $D_2$. Otherwise, we can derive that $v$ is also a cut vertex of $G$, a contradiction.  In order to prove the claim,  the following fact is necessary.
			
				\medskip
			\noindent{\bf Fact 1.} For any  vertex $u\in D_2$,  all vertices of $N(u)\setminus D_2$ belong to one component of $G'-v$.
			
			\proof  We first consider the case $u\in D_2^2$. If  $E(G[D_2^2])\ne\emptyset$, then Fact 1 is trivial since only one neighbor of $u$ belongs to $V(G')$. If $E(G[D_2^2])=\emptyset$, then their common neighbors are adjacent in $G'$ by the definition of $G'$. Hence, conclusion holds.  If $u\in D_2^1$,  then from the definition of $D_2^1$ and Lemma \ref{lem:D2_structure}, its two neighbors are adjacent in $G'$. Recall that $v$ is a neighbor of some vertex of $D_2$. Hence, they are also  adjacent in $G'-v$. Thus, Fact 1 is true. \qed
			
			Let $H_1$ and $H_2$ be two distinct components of $G'-v$. If at most one of $H_1$ and $H_2$ does not contains any neighbor of some vertex in $D_2$, then $v$ is also a cut vertex of $G$. We now assume that $H_1$ and $H_2$ both contain neighbors of some vertex in $D_2$. From Fact 1, we also deduce that $G$ contains a cut vertex $v$, a contradiction. Consequently, $G'$ is $2$-connected, as required.

			We next show $\text{diam}(G')=2$. Let $w_1$ and $w_2$ be two non-adjacent vertices of $G'$. Then the same is true in $G$. Since $G$ is kite-saturated,  by adding $w_1w_2$ to $G$, we get   $K^{w_1w_2}$ as a copy of $K$. Observe that $d_G(w_1),d_G(w_2)\ge 3$, together with Corollary \ref{cor:type_II} which infers that $d_{G}(w_1, w_2) = 2$. Assume thus that $w_3$ is a common neighbor of $w_1$ and $w_2$ in $G$. We next show $w_3\not\in D_2$. In fact, if $w_3\in D_2$, then $w_1,w_2\in N_G(D_2)$, which infers that $w_1w_2\in E(G')$. This leads to a contradiction to the choice of $w_1$ and $w_2$.  We thus conclude that $w_3\in V(G')$, which implies that  $ d_{G'}(w_1, w_2)= d_{G}(w_1, w_2)$.
		\end{proof} Hence, combining Claim~\ref{g'22} and Theorem~\ref{thm:diam2_bound},  we derive that $e(G')\ge 2n' - 5$. If $D^2_2=\emptyset$, then we can obtain that $e(G)=e(G')+2|D^1_2|\ge 2n'-5+2(n-n')=2n-5\ge f_1(n)$ for $n\ge 9$. Hence, we assume that $D^2_2\ne\emptyset$. From Lemma \ref{lem:D2_structure},  the following two cases are now considered.
		
		
			\medskip
		\noindent\textbf{Case 1.} $G$ satisfies the structure (a).
		In this case, it follows from $ G'$ that each vertex in $D^1_2 $ contributes exactly two edges to \( e(G) \), whereas each vertex in \( D_2^2 \) contributes \( \frac{3}{2} \) edges. Consequently, the size of \( G \) is estimated as follows:
		\[
		e(G) = e(G') + 2|D^1_2| + \frac{3}{2}|D_2^2| \ge (2n' - 5) + 2(n - n') - 1 =2n-6.
		\]

			\medskip
		\noindent\textbf{Case 2. }$G$ satisfies the structure (b).
		Observe that,  under $G'$,  each vertex of $D_2 $ contributes exactly two edges to \( e(G) \).  Hence, the size of $G$ satisfies
		\[
		e(G) = e(G') + 2|D_2| -1 \ge (2n' - 5) + 2(n - n')-1=2n-6.
		\]
		Combining above two cases, we conclude that $e(G)\ge 2n-6$. We derive that $e(G)\ge 2n-6>f_1(n)$ if $n\ge 11$.  Bearing in mind the condition $D_2^2\ne\emptyset$ and Lemma~\ref{lem:n9},  we deduce that  $e(G)>f_1(n)$ for $n=9,10$.
		Consequently, we verify our conclusion, as required.
	\end{proof}
	\begin{figure}[H]
		\centering
		\begin{subfigure}[t]{0.48\textwidth}
			\centering
			\begin{tikzpicture}[scale=0.8, line join=round, line cap=round]
				
				\draw[black, thick] (0,2.5) ellipse (2.8 and 0.75);
				\node[black] at (3.2,2.5) {$D_2$};
				
				\draw[black, thick] (0,0) ellipse (2.8 and 0.75);
				\node[black] at (3.2,0) {$\overline{D_2}$};
				
				\coordinate (T1a) at (-2.0,2.5);
				\coordinate (T1b) at (-2.3,0);
				\coordinate (T1c) at (-1.7,0);
				
				\draw[black, thick] (T1a) -- (T1b) -- (T1c) -- cycle;
				
				\node[black] at (-1.15,1.25) {$\cdots$};
				
				\coordinate (T2a) at (-0.6,2.5);
				\coordinate (T2b) at (-0.9,0);
				\coordinate (T2c) at (-0.3,0);
				
				\draw[black, thick] (T2a) -- (T2b) -- (T2c) -- cycle;
				
				\coordinate (C1) at (1.0,2.5);
				\coordinate (C2) at (1.8,2.5);
				\coordinate (C3) at (1.0,0);
				\coordinate (C4) at (1.8,0);
				
				\node[black] at (0.4, 2.5) {$D_2^2$};
				\draw[black, thin, rounded corners=8pt] (-0.05, 2.15) rectangle (2.15, 2.85);
				
				\draw[black, thick] (C1) -- (C2);  
				\draw[black, thick] (C1) -- (C3);  
				\draw[black, thick] (C2) -- (C4);  
				\draw[black, thick] (C3) -- (C4);  
				
				\foreach \p in {T1a,T2a,C1,C2}
				\fill[black] (\p) circle (1.3pt);
				
				\foreach \p in {T1b,T1c,T2b,T2c,C3,C4}
				\fill[black] (\p) circle (1.3pt);
				
			\end{tikzpicture}
			\caption{}
		\end{subfigure}
		\hfill
		\begin{subfigure}[t]{0.48\textwidth}
			\centering
			\begin{tikzpicture}[scale=0.8, line join=round, line cap=round]
				
				\draw[black, thick] (0,2.5) ellipse (2.8 and 0.75);
				\node[black] at (3.2,2.5) {$D_2$};
				
				\draw[black, thick] (0,0) ellipse (2.8 and 0.75);
				\node[black] at (3.2,0) {$\overline{D_2}$};
				
				\coordinate (T1a) at (-2.0,2.5);
				\coordinate (T1b) at (-2.3,0);
				\coordinate (T1c) at (-1.7,0);
				
				\draw[black, thick] (T1a) -- (T1b) -- (T1c) -- cycle;
				
				\node[black] at (-1.15,1.25) {$\cdots$};
				
				\coordinate (T2a) at (-0.6,2.5);
				\coordinate (T2b) at (-0.9,0);
				\coordinate (T2c) at (-0.3,0);
				
				\draw[black, thick] (T2a) -- (T2b) -- (T2c) -- cycle;
				
				\coordinate (U1) at (0.8,2.5);
				\coordinate (U2) at (1.3,2.5);
				\coordinate (U3) at (2.2,2.5);
				
				\node[black] at (0.2, 2.5) {$D_2^2$};
				\draw[black, thin, rounded corners=8pt] (-0.25, 2.15) rectangle (2.55, 2.85);
				
				\node[black] at (1.75,2.5) {$\cdots$};
				
				\coordinate (L1) at (1.2,0);
				\coordinate (L2) at (1.8,0);
				
				\draw[black, thick] (U1) -- (L1);
				\draw[black, thick] (U1) -- (L2);
				\draw[black, thick] (U2) -- (L1);
				\draw[black, thick] (U2) -- (L2);
				\draw[black, thick] (U3) -- (L1);
				\draw[black, thick] (U3) -- (L2);
				
				\foreach \p in {T1a,T2a,U1,U2,U3}
				\fill[black] (\p) circle (1.3pt);
				
				\foreach \p in {T1b,T1c,T2b,T2c,L1,L2}
				\fill[black] (\p) circle (1.3pt);
				
				\node[black, below] at (L1) {$x$};
				\node[black, below] at (L2) {$y$};
				
			\end{tikzpicture}
			\caption{}
		\end{subfigure}
		\vspace{-2mm}
		\caption{$V(G)$ with parts $D_2$ and $\overline{D_2}$.}
	\end{figure}
	
	\begin{proof}[\bf Proof of Theorem 3.1]
		Let $G$ be a minimum connected kite-saturated graph of order $n \ge 5$. By Lemma~\ref{lem:diam_4}, we get that the diameter of $G$ is restricted to $2 \le \text{diam}(G) \le 4$ (as $\text{diam}(G)=1$ implies $G \cong K_n$, which contains $K$ for $n \ge 5$). We proceed by analyzing the diameter and connectivity of $G$:
		
			\medskip
		\noindent\textbf{Case 1. }$3 \le \text{diam}(G) \le 4$.
		Lemma~\ref{lem:diam34_2connected} show that any kite-saturated graph is $2$-connected  if its diameter equals to three  or four. Furthermore,  Lemma~\ref{lem:diam34_bound} implies that $e(G) > f_1(n)$ for all $n \ge 5$.
		
			\medskip
		\noindent\textbf{Case 2.}  $\text{diam}(G) = 2$. We now assume that $G$ is connected and contains a cut vertex. By Lemma~\ref{lem:1conn_exact}, we have $e(G) \ge f_1(n)$, and equality holds only if $G \cong F_n^{\mathrm{odd}}$ (for odd $n$) or $G \cong F_n^{\mathrm{even}}$ (for even $n$).  If  $G$ is $2$-connected, then, by Lemmas  \ref{lem:n5_exact}, \ref{lem:n6_exact}, \ref{lem:n78_exact} and \ref{lem:diam2_n9}, we prove that $e(G) \ge f_1(n)$, moreover, equality holds only if $G \cong H_5$ for $n = 5$, or $G \in \{F_6^1, F_6^2, F_6^3\}$ for $n=6$.
		
		Therefore,  we are done.
	\end{proof}

	\section{Disconnected kite-saturated graphs}
	
	Let $G$ be a minimum kite-saturated graph of order $n$. If $G$ is disconnected, then let $G \cong H_1 \cup H_2 \cup \cdots \cup H_m$ with $m \ge 2$, where each $H_i$ is a component of $G$.
	
	\begin{lemma}\label{lem:comp_order}
		$H_i \cong K_{|H_i|}$ if $|H_i| \le 4$, $H_i$ is kite-saturated otherwise. Furthermore, every component of $G$ has order $4$, with at most one exception.
	\end{lemma}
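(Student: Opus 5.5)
The proof has three parts: small components are complete, large components are themselves saturated, and at most one component has order different from $4$.

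\emph{Small components are complete.} Suppose $|H_i|\le 4$ and $H_i$ is not complete, so $H_i$ has a non-edge $xy$. A copy of $K$ in $G+xy$ is connected, so it lies inside the component of $G+xy$ containing $H_i$, which is $H_i+xy$. That graph has at most $4<5=|V(K)|$ vertices, so it cannot contain $K$. This contradicts saturation, hence $H_i\cong K_{|H_i|}$.

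\emph{Large components are saturated.} Let $|H_i|\ge 5$. The component $H_i$ is $K$-free because $G$ is. For a non-edge $xy$ inside $H_i$, the kite $K^{xy}$ is connected and contains the edge $xy$, so all its vertices lie in $H_i+xy$. Hence $H_i$ is kite-saturated.

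\emph{At most one component has order $\ne 4$.} Argue by contradiction: suppose two components $H_i,H_j$ have orders different from $4$. Pick $x\in V(H_i)$ and $y\in V(H_j)$. The kite $K^{xy}$ uses the bridge $xy$, and a bridge of $K^{xy}$ can only be its pendant (type I) edge. So one of $x,y$, say $y$, is the type $\alpha_1$ vertex, and the $4$-vertex part $K_4-e$ of $K^{xy}$ lies entirely in $H_j$. Thus $y$ lies in a copy of $K_4-e$ inside $H_j$, and moreover $y$ has degree $3$ in that copy. Now split into cases by the orders of $H_i$ and $H_j$.

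\begin{itemize}
\item If both orders are at most $3$, no $K_4-e$ exists in either component, so $G+xy$ has no kite. This is a contradiction.
\item If $H_j$ has order $4$ it is $K_4$; that case is excluded by assumption.
\item If $|H_j|\ge 5$ and $H_j$ contains a $K_4-e$ whose degree-$3$ vertex $y$ has a further neighbour in $H_j$, then $G$ already contains $K$.
\end{itemize}

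The remaining case forces every degree-$3$ vertex of every $K_4-e$ in $H_j$ to have all its neighbours inside that $K_4-e$. But the $K_4-e$ then has a degree-$3$ vertex adjacent to all other three vertices, and connectivity of $H_j$ with $|H_j|\ge5$ gives an outside neighbour of some vertex of the $K_4-e$. I expect this to be the main obstacle. Taking a shortest path from outside the $K_4-e$ into it, as in Lemma~\ref{lem:keylemma}, should produce a kite inside $H_j$.

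Once that is settled, the conclusion is that whenever $x\in H_i$ and $y\in H_j$ are in different components, one of $H_i,H_j$ is a $K_4$. There is a simpler route to this: choose $x,y$ freely, since every pair across components is a non-edge. If neither $H_i$ nor $H_j$ contains a vertex of degree $3$ lying in a $K_4-e$ with no outside neighbours, no kite arises. By Lemma~\ref{lem:keylemma}, in a component of order at least $5$, a vertex $y$ of degree $\ge3$ with another vertex in its component cannot be $\alpha_1$. Choosing $y$ with degree $\le2$ (possible by minimality arguments) or using Lemma~\ref{lem:keylemma} directly, no cross edge can be type I, contradicting the bridge argument.

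Therefore all components except possibly one are $K_4$.
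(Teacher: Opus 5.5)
Your first two parts are fine, and so is the bridge observation: the added cross edge must be the pendant edge, so its $\alpha_1$ endpoint lies in a $K_4-e$ inside its own component. The gap is in the third part. There you try to reach a contradiction for an \emph{arbitrary} cross pair $x,y$ whenever the $\alpha_1$ endpoint lies in a component of order $\ge 5$.

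As written, that step is wrong. The $\alpha_1$ vertex has degree $2$ in the $K_4-e$, not $3$. Moreover, a $K_4-e$ with a pendant edge at a degree-$3$ vertex is not a kite, since its degree sequence is $(4,3,2,2,1)$.

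Even read charitably, the case $d_G(y)=2$ cannot be excluded. Here $N_G(y)$ consists of exactly the two $\alpha_2$ vertices, and this really occurs in kite-saturated components. Take $F_m^{\rm even}$ with apex $v$ and $P_3=p_1p_2p_3$. The set $\{v,p_1,p_2,p_3\}$ spans a $K_4-e$ in which $p_1$ has degree $2$, and $N(p_1)=\{v,p_2\}$. So joining $p_1$ to any vertex of another component creates a kite with $p_1$ as $\alpha_1$. The same example refutes your claim that a further neighbour of a degree-$3$ vertex of a $K_4-e$ yields a kite: $v$ is such a vertex, it has further neighbours, and $F_m^{\rm even}$ is kite-free.

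Hence no argument that lets $x,y$ be arbitrary can work. Your closing paragraph does not fix this. Choosing $y$ of degree $\le 2$ is exactly the wrong choice, and ``no cross edge can be type I'' is false by the same example.

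The missing idea, which is what the paper uses, is to \emph{choose} the endpoints. It needs two facts.

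\begin{itemize}
\item[(a)] Every kite-saturated graph of order $\ge5$ has a vertex of degree $\ge3$. Add a non-edge: the resulting kite has three vertices of degree $3$, and at most two of them are endpoints of the new edge.
\item[(b)] A vertex $z$ with $d_G(z)\ge3$ in a component of order $\ge5$ cannot be the $\alpha_1$ vertex of a kite through a cross edge. Its third neighbour is either outside the $K_4-e$, which gives a kite in $G$, or it is the $\alpha_3$ vertex. In the latter case we get a $K_4$, some vertex of which has an outside neighbour by connectivity, again giving a kite in $G$.
\end{itemize}

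Lemma~\ref{lem:keylemma} as stated assumes $x,y$ lie in the same component, so you need this variant rather than the lemma itself.

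With (a) and (b) the three cases close:
\begin{itemize}
\item If both components have order $\ge5$, join two vertices of degree $\ge3$; neither can be $\alpha_1$.
\item If one component has order $\ge5$ and the other order $\le3$, join a degree-$\ge3$ vertex of the large one to any vertex of the small one. This forces a $K_4-e$ inside a component of order $\le 3$, which is impossible.
\item If both components have order $\le 3$, neither contains a $K_4-e$.
\end{itemize}
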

	
	\begin{proof}
		We first assume  $|H_i| \le 4$. If $H_i \not\cong K_{|H_i|}$, then there exists non-adjacent vertices $u, v \in V(H_i)$.  But $G+ uv$ cannot create a copy of $K$, we get a contradiction to  the assumption that $G$ is kite-saturated. If $|H_i| \ge 5$, then $H_i\not\cong K_{|H_i|}$, otherwise, $H_i$ contains a copy of $K$, which contradicts that $G$ is kite-saturated. Let $x$ and $y$ be two vertices of $H_i$ with $xy\not\in E(G)$.  So $G+xy$ contains a copy of $K$ having the added edge $xy$, the same holds for $H_i+xy$. Hence, $H_i$ is kite-saturated.
		
		We next show that at most one component of $G$ has order other than $4$. Suppose, by contradiction, that there exist at least two components $H_i$ and $H_j$ such that $|H_i| \neq 4$ and $|H_j| \neq 4$. We consider the following cases
		
			\medskip
		\noindent\textbf{Case 1. }$|H_i| < 4$ and $|H_j| < 4$. By the argument above, we get  that $H_i$ and $H_j$ are complete graphs and do not contain $4$-cycles.  Observe that, by adding any edge between $H_i$ and $H_j$, this can not create a $4$-cycle. it follows that $G$ is not kite-saturated, a contradiction.
		
		\medskip
		\noindent\textbf{Case 2.} $|H_i| > 4$ and $|H_j| > 4$.
		By the saturation of these two components, they contain vertices with degree at least three. Let $x_i$ and $x_j$ be the vertices of $H_i$ and $H_j$ with degree at least three, respectively.
		Consider $G + x_ix_j$. Since $d_G(x_i,x_j) = \infty \ge 3$, by Observation~\ref{obs:dist_type}, the added edge $x_ix_j$ is the type I edge of $K^{x_ix_j}$. Conversely,  since $d_G(x_i) \ge 3$ and $d_G(x_j) \ge 3$, neither $x_i$ nor $x_j$ can serve as the type $\alpha_1$ vertex of $K^{x_ix_j}$, otherwise, we will get a copy of $K$, a contradiction.  This contradicts our assumption.
		
		\medskip
		\noindent\textbf{Case 3. }$|H_i| > 4$ and $|H_j| < 4$.
		By the same argument in Case 2, $H_i$ contains a vertex $x_i$ with $d_G(x_i) \ge 3$. For any $x_j \in V(H_j)$, $G + x_ix_j$ will create $K^{x_ix_j}$. Observation~\ref{obs:dist_type} together with $d_G(x_i,x_j) \ge 3$, implies that $x_ix_j$ is the type I edge. Clearly,  $x_i$ is not the type $\alpha_1$ vertex. Thus, $x_j$ is the type $\alpha_1$ vertex, it follows that $x_j$ is contained in a $4$-cycle of $H_j$. But it is impossible.
		
		Therefore, we are done.
	\end{proof}
	
	\section{The proof of Theorem \ref{thm:main} }

	\begin{proof}[\bf Proof of Theorem \ref{thm:main}]
		Let $G$ be a minimum kite-saturated graph of order $n \ge 5$. Observation \ref{obs:upperbound} implies that $e(G)\le f(n)$. To establish our conclusion, in the remaining part of the proof, we will show the  lower bound  $e(G)\ge f(n)$.
		
		We first assume that $G$ is connected. Evidently, $f_1(n)\ge f(n)$. Theorem~\ref{thm:connected_bound} brings that $e(G)\ge f_1(n)\ge f(n)$, with equality only if $n\neq 4k+2$ with $k\ge 1$ and $G\in\{H_5, F_{n}^{\mathrm{odd}},F_{n}^{\mathrm{even}} \}$.
		We now assume that $G$ is disconnected. By Lemma~\ref{lem:comp_order}, $G$ has $m$ components $H_1, H_2, \dots, H_m$, where at most one component has order other than $4$, and every other component is isomorphic to $K_4$. Without loss of generality, assume that $H_i\cong K_4$ for $i\le m-1$. Let $p = m-1$. then $n = |H_m| + 4 p$. We now estimate $e(G)$ depending on $n$ modulo $4$.
		
		\medskip
		\noindent\textbf{Case 1. }$n \equiv 0 \pmod 4$.
		If all components of $G$ are $K_4$ components, then  $e(G) = \frac{6n}{4} = \frac{3n}{2} > f(n)$. We thus assume that $G$ contains one component of order different from $4$  Then  $|H_m|\equiv 0 \pmod 4$ and $|H_m| \ge 8$.  By Theorem 3.1, $e(H_m) \ge f_1(|H_m|) = \frac{3|H_m|}{2} - 1$, and then equality holds only if $G\cong F_{n-4p}^{\mathrm{even}}$. Consequently,
		\[ e(G) = e(H_m) + 6p \ge \left( \frac{3(n-4p)}{2} - 1 \right) + 6p = \frac{3n}{2} - 1 = f(n). \]
		Moreover, equality holds only if $H_i \cong F_{n-4p}^{\mathrm{even}}$, so $G \in \{F_{n-4p}^{\mathrm{even}} \cup pK_4 \mid 1 \le p \le \frac{n-8}{4}\}$.
		
		\medskip
		\noindent\textbf{Case 2.} $n \equiv 1 \pmod 4$. In this case, $G$ contains one component of order different from $4$. Then $|H_m| \equiv 1 \pmod 4$.
		If $|H_m| = 1$, then $e(G) = 0 + 6(\frac{n-1}{4}) = \frac{3n-3}{2} = f(n)$, which infers that $G \cong K_1 \cup \frac{n-1}{4}K_4$.
		If $|H_m| \ge 5$, then, by Theorem 3.1, $e(H_m) \ge f_1(|H_m|) = \frac{3|H_m|-3}{2}$ with equality only if $H_m \in \{H_5, F_{n-4p}^{\mathrm{odd}}\}$. We thus get
		\[ e(G) \ge \frac{3(n-4p)-3}{2} + 6p = \frac{3n-3}{2} = f(n). \]
		Furthermore, equality holds only if $G \in \{F_{n-4p}^{\mathrm{odd}} \cup pK_4, \mid 1 \le p \le \frac{n-5}{4}\}\cup \{H_5\cup \frac{n-5}{4}K_4\}$.
		
		\medskip
		\noindent\textbf{Case 3.} $n \equiv 2 \pmod 4$. Clearly,  $H_m$ has $|H_m| \equiv 2\pmod 4$.
		If $|H_m| = 2$, then $e(G) = 1 + 6(\frac{n-2}{4}) = 1 + \frac{3n-6}{2} = \frac{3n-4}{2} = \frac{3n}{2} - 2 = f(n)$, so $G \cong K_2 \cup \frac{n-2}{4}K_4$.
		If $|H_m| \ge 6$, then by Theorem 3.1, $e(H_m) \ge f_1(|H_m|) = \frac{3|H_m|}{2} - 1$.  Based on them, we have that
		\[ e(G) \ge \frac{3(n-4p)}{2} - 1 + 6p = \frac{3n}{2} - 1 > f(n). \]
		
		\medskip
		\noindent\textbf{Case 4. }$n \equiv 3 \pmod 4$. We observe that $H_m$ has $|H_m| \equiv 3\pmod 4$.
		If $|H_m| = 3$, then $e(G) = 3 + 6(\frac{n-3}{4}) = 3 + \frac{3n-9}{2} = \frac{3n-3}{2} = f(n)$, so $G \cong K_3 \cup \frac{n-3}{4}K_4$.
		If $|H_m| \ge 7$, then, by Theorem 3.1, $e(H_m) \ge f_1(|H_m|) = \frac{3|H_m|-3}{2}$ with equality only if $H_m \cong F_{n-4p}^{\mathrm{odd}}$. We thus have
		\[ e(G) \ge \frac{3(n-4p)-3}{2} + 6p = \frac{3n-3}{2} = f(n), \]
		and equality holds if and only if $G \in \{F_{n-4p}^{\mathrm{odd}} \cup pK_4, \mid 1 \le p \le \frac{n-7}{4}\}\cup \{K_3\cup \frac{n-3}{4}K_4\}$.
		
		Combining the above argument, we conclude that $\sat(n,K)=f(n)$ and $\Sat(n,K)= \mathcal{K}_n$.
	\end{proof}

	\noindent{\bf Acknowledgments}
	
	Qing Cui is partially supported by the National Natural Science Foundation of China (No. 12171239). Shengjin Ji is partially by the National Natural Science Foundation of China (No. 12561061).
	Fuhong Ma is partially supported by  the National Natural Science Foundation of China (No.12401458) and the Natural Science Foundation of Shandong Province, China  (No.ZR2024QA239).

\end{document}